\theoremstyle{plain}
\newtheorem{definition}{Definition}[section]
\newtheorem{lemma}[definition]{Lemma}
\newtheorem{df}{Definition}[subsection]
\newtheorem{ex}[df]{Example}
\newtheorem{rk}[df]{Remark}
\theoremstyle{plain}
\newtheorem{proposition}[definition]{Proposition}
\newtheorem{remark}[definition]{Remark}
\newtheorem{theorem}[definition]{Theorem}
\newtheorem{lemme}[definition]{Lemme}
\newcommand{\Hom}{{\rm Hom}}
\title{Deformations of linear Poisson orbifolds}
\author{Gilles Halbout}
\address{Institut de Math\'ematiques et de Mod\'elisation de Montpellier,
Universit\'e de Montpellier 2, CC5149, Place Eug\`ene Bataillon,
F-34095 Montpellier CEDEX 5, France}
\email{halbout@@math.univ-montp2.fr}
\author{Jean-Michel Oudom}
\address{Institut de Math\'ematiques et de Mod\'elisation de Montpellier,
Universit\'e de Montpellier 2, CC5149, Place Eug\`ene Bataillon,
F-34095 Montpellier CEDEX 5, France}
\email{oudom@math.univ-montp2.fr}
\author{Xiang Tang}
\address{Department of Mathematics, Washington University, St. Louis, Missouri, USA, 63130}
\email{xtang@math.wustl.edu}
\begin{document}

\begin{abstract}
Let $\Gamma$ be a finite group acting faithfully and linearly on a
vector space $V$. Let $T(V)$ ($S(V)$) be the tensor (symmetric)
algebra associated to $V$ which has a natural $\Gamma$ action. We
study generalized quadratic relations on the tensor algebra
$T(V)\rtimes \Gamma$. We prove that the quotient algebras of
$T(V)\rtimes \Gamma$ by such relations satisfy PBW property. Such
quotient algebras can be viewed as quantizations of linear or
constant Poisson structures on $S(V)\rtimes \Gamma$, and are natural
generalizations of symplectic reflection algebras.
\end{abstract}
\maketitle

\section{Introduction}

Poisson structure on a manifold is a bivector field $\pi$ whose
Schouten-Nijenhuis bracket with itself vanishes, i.e. $\pi\in
\Gamma(\wedge^2 TM)$, and $[\pi, \pi]=0$. The problem of deformation
quantization of a Poisson manifold was solved by Kontsevich in his
semina paper \cite{K}. In this paper, we study quantization problem
of Poisson structures on an orbifold following \cite{H-T}.

The study of the first and third author in \cite{H-T} starts with
the idea that Poisson structures on an algebra $A$ should correspond
to infinitesimal deformations of $A$. According to Gerstenhaber's
theory, an infinitesimal deformation of an algebra is classified by
a second Hochschild cohomology class in $H^2(A,A)$ whose
Gerstenhaber bracket with itself is zero. This type of cohomology
class is called a Poisson structure (\cite{B-G-p}, \cite{xu}) on
$A$. Applying this idea to orbifold, we can represent an orbifold
$X$ by a proper \'etale groupoid $\mathcal {G}$ \cite{M} (different
representations of same orbifold are Morita equivalent as Lie
groupoids). We consider the smooth groupoid algebra
$C^\infty_c(\mathcal {G})$ associated to $\mathcal {G}$. We studied
in \cite{H-T} Poisson structures on $C^\infty_c(\mathcal {G})$. We
find that Poisson structures on $C^\infty_c(\mathcal {G})$ are
richer than we naturally expect from geometry. On an orbifold,
multivector fields and Schouten-Nijenhuis bracket are well defined.
Accordingly, we can consider bivector fields on $X$ whose
Schouten-Nijenhuis bracket with themselves vanish. We found that
\cite{H-T}[Theorem 4.1] there are many more Poisson structures on
$C^\infty_c(\mathcal {G})$ than the above type of bivector fields on
$X$. For example, in the case of a finite group $\Gamma$ acting on a
symplectic vector space $V$, we \cite{H-T}[Corollary 4.2] find
Poisson structures on $S(V^*)\rtimes \Gamma$ which have supports on
codimension 2 fixed point subspaces, where $S(V^*)$ is the algebra
of real coefficients symmetric
polynomials on the dual vector space $V^*$.\\

In this paper, we continue our study of Poisson structures in the
above framework. We will study Poisson structures in a neighborhood
of a point in a reduced orbifold.  Locally, a reduced orbifold can
always be viewed as a quotient of a finite group acting faithfully
and linearly on an open set of $\mathbb {R}^n$. This leads us to
study the following data. Let $\Gamma$ be a finite group acting on a
vector space $V$ faithfully, and $S(V^*)$ be the algebra of
polynomials on $V^*$. The $\Gamma$ action on $V^*$ defines the
crossed product algebra $S(V^*)\rtimes \Gamma$. According to
\cite{N-P-P-T}, the second Hochschild cohomology $H^2(S(V^*)\rtimes
\Gamma, S(V^*)\rtimes \Gamma)$ is equal to
\[
(S(V^*)\otimes \wedge^2V)^\Gamma\oplus( \sum_{\gamma\in \Gamma,
l(\gamma)=2} S({V^\gamma}^*)\otimes \wedge^2 N^\gamma)^\Gamma.
\]
In the above equation,  $l(\gamma)$ is the codimension of the
$\gamma$ fixed point subspace $V^\gamma$ and $N^\gamma$ consists of
vectors in $V$ vanishing on ${V^\gamma}^*$ (the fixed points
subspace of $\gamma$ action on $V^*$), and $\Gamma$ acts on the set
$S:=\{\gamma\in \Gamma, l(\gamma)=2\}$ by conjugation.

We study two types of Poisson structures on $S(V^*)\rtimes \Gamma$
which are of the forms
\[
i)\ \Hom(\wedge^2V^*, \mathbb{R}\Gamma),\qquad\ ii)\ \Hom(\wedge
^2V^*, V^*\otimes_{\mathbb{R}} \mathbb{R}\Gamma).
\]
The first type of Poisson structure can be viewed as constant value
Poisson structures, and the second type can be viewed as linear
Poisson structures, which define generalized ``Lie" algebra
structures on $V^*\otimes_{\mathbb R} \mathbb R \Gamma$. Our main
theorems for these Poisson structures are that the quotient algebras
of $T(V^*)\rtimes \Gamma$ by the relations defined by the above two
types of Poisson structures satisfy PBW property. (For general
linear Poisson structures, we need to assume that $V$ is equipped
with a $\Gamma$-invariant complex structure.) The way we prove such
theorems is using the Braverman-Gaitsory conditions \cite{B-G} for
PBW property. However, the proof for the second type is quite
involved. We need to use properties of finite subgroups of $GL(2,
\mathbb{C})$, which is closely related to McKay correspondence. A
new and interesting phenomena we found in the proof of PBW property
is that we have to have a nontrivial coboundary term for the bracket
$[\pi, \pi]$ of the linear Poisson structure $\pi$. This kind of
term never shows up in the study of PBW property for Lie algebras
and symplectic reflection algebras. The PBW property of the quotient
algebras shows that they define quantizations of the Poisson
structures on $S(V^*)\rtimes \Gamma$. This confirms that any
constant or linear Poisson structures on $S(V^*)\rtimes \Gamma$ can
be quantized, and gives a strong evidence that the deformation
theory of the algebra $S(V^*)\rtimes \Gamma$ is formal.

The second part of this paper is dedicated to studying various
properties and examples of the above two types of Poisson structures
and their quantizations. We mention a few of them here. Firstly,
using Poisson cohomology computation, we are able to give a new
computation of Hochschild cohomology of a symplectic reflection
algebra \cite{E-G}[Theorem 1.8]. The advantage of our work is that
our result works for Laurent series of $\hbar$ so that we can drop
the assumption ``except possibly a countable set" in
\cite{E-G}[Theorem 1.8]. Secondly, assuming a finite group $\Gamma$
acts faithfully and linearly on a Lie algebra $\mathfrak {g}$, we
compute the Hochschild cohomology of $H^\bullet(\mathcal U(\mathfrak
g)\rtimes \Gamma, \mathcal U(\mathfrak g)\rtimes \Gamma)$ with
$\mathcal U(\mathfrak g)$ the universal enveloping algebra of
$\mathfrak g$. This is a natural generalization of results
\cite{A-F-L-S} for a finite group action on a symplectic vector
space. Our result shows that the Hochschild cohomology
$H^\bullet(\mathcal U(\mathfrak g)\rtimes \Gamma, \mathcal
U(\mathfrak g)\rtimes \Gamma)$ is computed by the noncommutative
Poisson cohomology associated to the $\Gamma$ action and Lie-Poisson
structure $\pi$ on $\mathfrak g$. Thirdly, if $V$ is equipped with a
$\Gamma$ invariant Lie Poisson structure, we introduce a large class
of examples of linear Poisson structures on $S(V^*)\rtimes \Gamma$.
Quantizations of these linear Poisson structures should be viewed as
natural generalizations of symplectic reflection algebras. Analogous
to the symplectic case, we are able to prove that the Poisson
cohomology of such a general linear Poisson structure is determined
by the data supported at the identity of $\Gamma$ in the case that
$\Gamma$ is abelian.

In the third part of this paper, we restrict ourselves to
$\mathbb{R}^2$ with a cyclic group $\mathbb Z_n$ action. Let
$\omega$ be the standard symplectic form on $\mathbb{R}^2$ and $\pi$
be the corresponding Poisson structure. We study the quantization of
the Poisson structure $\pi_\gamma: x\wedge y\to \pi(x,y)\gamma$ on
$S({\mathbb {R}^2})\rtimes \mathbb Z_n$ for $\gamma\in \mathbb Z_n$
(We identify ${\mathbb R^2}^*$ with $\mathbb R^2$). Nadaud \cite{N}
gave a Moyal type formula for such a deformation quantization. Here,
we use this formula to study the center of the quantization. Our
computation shows that the center of the quantization of
$\pi_\gamma$ is not isomorphic to the center of the algebra
$S(\mathbb{R}^2)\rtimes \mathbb {Z}_n$. Instead, the center of the
quantization is a nontrivial deformation of the center of
$S(\mathbb{R}^2)\rtimes \mathbb {Z}_n$. This suggests that there is
no analog of Duflo's isomorphism for the quantization of the Poisson
structure $\pi_\gamma$, and also that deformation quantization of
$\pi_\gamma$ on $S(\mathbb{R}^2)\rtimes \mathbb {Z}_n$ is closely
connected to the deformation of the underlying orbifold singularity.
We plan to study the relation between deformations of $S(V^*)\rtimes
\Gamma$ and deformations of
the underlying orbifold $V/\Gamma$ in the near future.\\

This paper is organized as follows. In the second section, we review
some results about Hochschild cohomology $H^\bullet(S(V^*)\rtimes
\Gamma, S(V^*)\rtimes \Gamma)$ in \cite{N-P-P-T} and \cite{H-T}, and
also the Braverman-Gaitsgory conditions for PBW property \cite{B-G};
in the third section, we prove that constant and linear Poisson
structures on $S(V^*)\rtimes \Gamma$ can be quantized; in the forth
section, we study various properties and examples of constant and
linear Poisson structures on $S(V^*)\rtimes \Gamma$; in the fifth
section, using Nadaud's formula, we study the centers of
quantizations of some Poisson structures on $S({\mathbb
{R}^2})\rtimes
\mathbb{Z}_n$.\\

\noindent{\bf Acknowledgements:} We would like to thank C\'edric
Bonnaf\'e for discussion about finite subgroups of $GL(2,
\mathbb{C})$,  Georges Pinczon for showing the results of Nadaud
\cite{N}, and Victor Ginzburg for general discussion of symplectic
reflection algebras and Duflo's isomorphism. The research of the
third author is partially supported by NSF Grant 0703775.

\section{Preliminaries and notations}

In this whole paper, $\Gamma$ is a finite group, acting faithfully
and linearly on a finite dimensional real vector space $V$. We fix
on $V$ a $\Gamma$-invariant metric. We denote $C(\Gamma)$ the set of
conjugacy classes of $\Gamma$. For any element $\gamma$ in $\Gamma$,
let $V^\gamma$ be the $\gamma$-invariant subspace of $V$, $N^\gamma$
be the subspace of $V$ orthogonal to $V^\gamma$ which is the direct
sum of all nontrivial representations of $G(\gamma)$ (the subgroup
of $\Gamma$ generated by $\gamma$), $l(\gamma)$ be the real
codimension of $V^\gamma$, and $Z(\gamma)$ the centralizer of
$\gamma$ in $\Gamma$. In this paper, we always work with the field
$\mathbb {R}$. All dimensions, algebras, and tensor products if not
specified are over the field $\mathbb R$. For the convenience of
proofs, we are many times using the following complexification
trick,
\begin{equation}\label{eq:complex}
S_{\mathbb R}(V^*)\rtimes _{\mathbb R} \mathbb C \Gamma\cong
S_{\mathbb C}(V^*\otimes \mathbb C)\rtimes_{\mathbb C}\mathbb
C\Gamma \cong \Big(S_{\mathbb R}(V^*)\rtimes _{\mathbb R} \mathbb
R\Gamma \Big)\otimes_{\mathbb R} \mathbb C.
\end{equation}
This helps us to deduce the results in $\mathbb R$ from their
complex versions where $\gamma$ action is diagonalizable for any
$\gamma\in \Gamma$. Many results in this paper hold true for field
$\mathbb {C}$ and even more general field with characteristic 0 in
which the order of $\Gamma$ is invertible.

\subsection{The Koszul complex and the Hochschild cohomology of $S (V^*)\rtimes\Gamma$}
\label{Koszul}

The algebra $S(V^*)\rtimes \Gamma$ is generated by $V^*$ and
$\Gamma$ with the quadratic relations :
$$
x\otimes y \otimes \gamma - y\otimes x\otimes \gamma,\ \gamma
\otimes x -^\gamma x\otimes \gamma,
$$
for all $x$ and $y$ in $V^*$ and $\gamma$ in $\Gamma$, and $^\gamma
x$ is the image of $x$ under the $\gamma$ action. Moreover,
$S(V^*)\rtimes \Gamma$ is a Koszul algebra over the semi-simple
algebra $\mathbb{R}\Gamma$. The general theory of Koszul algebras
over a semi-simple algebra gives therefore a small complex which
calculates the Hochschild cohomology of $S(V^*)\rtimes \Gamma$ :
$$
CK^\bullet(S(V^*)\rtimes \Gamma)= \Big(\bigoplus_{\gamma\in \Gamma}
S(V^*)\otimes \wedge ^\bullet V\Big)^\Gamma.
$$

A $n$-cochain $f$ of this complex splits in a sum of maps $f_\gamma$
in $S(V^*)\otimes \Lambda^n V$. The $\Gamma$-invariance can be
written :
\begin{equation}\label{invariance}
^gf_\gamma(^{g^{-1}}\!\!x_1,\cdots , ^{g^{-1}}\!\!x_n)=f_{g\gamma
g^{-1}}(x_1,\cdots,x_n),
\end{equation}
which explains that $CK^\bullet(S(V^*)\rtimes \Gamma)$ splits in a
sum of sub-complexes :
$$
CK^\bullet(S(V^*)\rtimes \Gamma)=\bigoplus_{\gamma\in
C(\Gamma)}(S(V^*)\otimes \wedge ^\bullet V)^{Z(\gamma)}
$$
with the boundary
$$\partial_\gamma(f)(x_0,\cdots,x_n) =
\sum_{i=0}^n (-1)^{i} f (x_0,\cdots,\widehat{x}_i,\cdots,x_n)
(x_i-^\gamma\!x_i),
$$
for $x_0, \cdots, x_n\in V^*$.

Using this small complex,  Neumaier, Pflaum, Posthuma and the third
author calculated in \cite{N-P-P-T}  the Hochschild cohomology of
$S(V^*)\rtimes \Gamma$ :
\begin{equation}\label{N-P-P-T}
H^\bullet(S(V^*)\rtimes \Gamma,S(V^*)\rtimes \Gamma
)=\bigoplus_{\gamma\in
C(\Gamma)}\Big(S({V^\gamma}^*)\otimes\Lambda^{\bullet-l(\gamma)}V^\gamma
\otimes \Lambda^{l(\gamma)}N^\gamma \Big)^{Z(\gamma)}.
\end{equation}
This statement in $\mathbb R$ is easily deduced from its complex
version \cite{N-P-P-T} using the trick (\ref{eq:complex}).

We point out that the projection $\textsf{pr}_\gamma: (S(V^*)\otimes
\wedge^\bullet V)^{Z(\gamma)}\rightarrow (S({V^\gamma}^*)\otimes
\Lambda^{\bullet-l(\gamma)}V^\gamma \otimes
\Lambda^{l(\gamma)}N^\gamma)^{Z(\gamma)}$ and the embedding
$\iota:(S({V^\gamma}^*)\otimes \Lambda^{\bullet-l(\gamma)}V^\gamma
\otimes \Lambda^{l(\gamma)}N^\gamma)^{Z(\gamma)}\rightarrow
(S(V^*)\otimes \wedge^\bullet V)^{Z(\gamma)}$ are inverse
quasi-isomorphisms of complexes. Another useful remark is that if
$l(\gamma)$=dimension of $N^\gamma$ is odd, the determinant of
$\gamma$ action on $N^\gamma$ is -1 (otherwise $\gamma$ has an
eigenvalue 1 as $\gamma$ is an isometry). Therefore
$S({V^\gamma}^*)\otimes \Lambda^{\bullet-l(\gamma)}V^\gamma \otimes
\Lambda^{l(\gamma)}N^\gamma$ has no $\gamma$ invariant element if
$l(\gamma)$ is odd. Therefore, Poisson brackets on $S(V^*)\rtimes
\Gamma$ do not contain $\gamma$-component for $l(\gamma)=1$.
Furthermore, when $\Gamma$ acts faithfully, the identity of $\Gamma$
is the only group element with $l(\gamma)=0$.\\

We will say that a cocycle is constant if it is in $(\wedge
^2V)^\Gamma\oplus(\bigoplus_{\gamma\in \Gamma,
l(\gamma)=2}\wedge^2N^\gamma)^\Gamma$. Similarly, we will say that a
cocycle is linear if it is in $(V^*\otimes \wedge ^2V)^\Gamma
\oplus(\bigoplus_{\gamma\in \Gamma, l(\gamma)=2}{V^\gamma}^*\otimes
\wedge^2N^\gamma)^\Gamma$.

\subsection{The Braverman-Gaitsgory conditions for PBW}\label{BG}

Let $T(V^*)\rtimes\Gamma$ be the free $\mathbb R\Gamma$-algebra
generated by the bimodule $V^*\rtimes \Gamma$, and $A$ be its
quotient by the relations :
$$
x\otimes y - y \otimes x - \sum_\gamma \pi_\gamma(x,y)\gamma
-\sum_\gamma b_\gamma(x,y)\gamma
$$
where $\pi$ and $b$ are $\Gamma$-invariant elements in
$\oplus_{\gamma\in \Gamma}V^*\otimes \wedge^2(V)$  and
$\oplus_{\gamma\in \Gamma}\wedge^2 V$. As before, $\pi$ and $b$
split into sums of $\gamma$-components, and the $\Gamma$-invariance
is expressed in the same way as in (\ref{invariance}). The algebra
$A$ is clearly filtered by the length of words. Following Braverman
and Gaitsgory \cite{B-G}, the associated graded algebra $Gr(A)$ is
isomorphic to $S(V^*)\rtimes \Gamma$ if and only if :
\begin{equation}\label{BG1}
\partial_\gamma(\pi_\gamma)=0,
\end{equation}
\begin{equation}\label{BG2}
\sum_{\alpha\beta=\gamma}\pi_\alpha(\pi_\beta(x,y),z+^\beta\!z)+\pi_\alpha(\pi_\beta(y,z),x+^\beta\!x)+\pi_\alpha(\pi_\beta(z,x),y+^\beta\!y)=\partial_\gamma(b_\gamma),
\end{equation}
\begin{equation}\label{BG3}
\sum_{\alpha\beta=\gamma}b_\alpha(\pi_\beta(x,y),z+^\beta\!z)+b_\alpha(\pi_\beta(y,z),x+^\beta\!x)+b_\alpha(\pi_\beta(z,x),y+^\beta\!y)=0
\end{equation}
for all $\gamma$ in $\Gamma$ and $x$, $y$, $z$ in $V^*$.

When the three conditions above are satisfied, the algebra $A$ gives
a quantization of the algebra $S(V^*)\rtimes \Gamma$ for the same
reason as is explained in \cite{H-T}(Proposition 4.5). This will be
our method to obtain the quantization results of the next section.

For our purpose, let us denote $\llbracket
\pi,\pi\rrbracket_\gamma\in V^*\otimes \wedge^3V$ and  $\llbracket
b,\pi\rrbracket_\gamma\in \wedge^3V$ defined by :
$$
\llbracket\pi,\pi\rrbracket_\gamma(x,y,z):=\,\sum_{\alpha\beta=\gamma}\pi_\alpha(\pi_\beta(x,y),z+^\beta\!z)+\pi_\alpha(\pi_\beta(y,z),x+^\beta\!x)+\pi_\alpha(\pi_\beta(z,x),y+^\beta\!y),
$$
$$
\llbracket
b,\pi\rrbracket_\gamma(x,y,z):=\sum_{\alpha\beta=\gamma}b_\alpha(\pi_\beta(x,y),z+^\beta\!z)+b_\alpha(\pi_\beta(y,z),x+^\beta\!x)+b_\alpha(\pi_\beta(z,x),y+^\beta\!y)
$$
for all $x$, $y$, $z$ in $V^*$.

\subsection{The Gerstenhaber bracket on $H^
\bullet(S(V^*)\rtimes \Gamma, S(V^*)\rtimes \Gamma)$ and Poisson
structures}\label{Gerstenhaber}

The Gerstenhaber bracket on $H^\bullet(S(V^*)\rtimes \Gamma,
S(V^*)\rtimes \Gamma)$ was explicitly calculated by the first and
last authors in \cite{H-T}. We only recall here the results in the
cases we will need and refer to \cite{H-T} for a complete
description.

Firstly, the Gerstenhaber bracket of two constant cocycles is zero.

Secondly, let $b$ be a constant cocycle and $\pi$ be a linear
cocycle of $H^2(S(V^*)\rtimes \Gamma, S(V^*)\rtimes \Gamma)$. Let
$\textsf{pr}_\gamma$ be the projection from $S(V^*)\otimes \wedge
^\bullet V$ onto $S({V^\gamma}^*)\otimes
\wedge^{\bullet-l(\gamma)}V\otimes \wedge^{l(\gamma)} N^\gamma$.
Then the $\gamma$-component of their Gerstenhaber bracket is :
\begin{equation}
[b,\pi]_\gamma= {\textsf pr}_\gamma \circ \llbracket
b,\pi\rrbracket_\gamma.
\end{equation}
Moreover, the $\gamma$-component of the Gerstenhaber bracket of
$\pi$ with itself is obtained by
\begin{equation}
[\pi,\pi]_\gamma = \textsf{pr}_\gamma\circ\llbracket
\pi,\pi\rrbracket_\gamma.
\end{equation}

Let $\alpha$ and $\beta$ be two elements of $\Gamma$, $f_\alpha$ be
an element of $(S({V^\alpha}^*)\otimes
\wedge^{\bullet-l(\alpha)}V^\alpha \otimes
\wedge^{l(\alpha)}N^\alpha)^{Z(\alpha)}$ and $g_\beta$ be an element
of $(S({V^\beta}^*)\otimes \wedge^{\bullet-l(\beta)}V^\beta \otimes
\Lambda^{l(\beta)}N^\beta)^{Z(\beta)}$. If $l(\gamma)\ne
l(\alpha)+l(\beta)$, then the $\gamma$ component of $[f_\alpha,
g_\beta]$ vanishes. Suppose that any elements in $<\alpha>$ commutes
with any elements in $<\beta>$, where $<\alpha>, <\beta>$ are
subsets of $\Gamma$ of elements conjugate to $\alpha$ and $\beta$.
Then the Gerstenhaber bracket of $f_\alpha$ and $g_\beta$ is :
\begin{equation}
[f_\alpha,g_\beta]_{\gamma}=\sum_{{\tiny \begin{array}{c}\alpha'\in
<\alpha>, \beta'\in <\beta>, \alpha'\beta'=\gamma\\
l(\gamma)=l(\alpha'\beta')=l(\alpha)+l(\beta)\end{array}}}{\textsf
pr}_{\!\gamma}\circ\{f_{\alpha'},g_{\beta'}\},
\end{equation}
where $\{f_{\alpha'},g_{\beta'}\}$ is the usual Schouten-Nijenhuis
bracket of $f_{\alpha'}$ and $g_{\beta'}$.

The first and third author defined that a Poisson structure $\Pi$ on
$S(V^*)\rtimes \Gamma$ is a sum of elements like
\[
\pi_\gamma\in \Big(S({V^\gamma}^*)\otimes
\wedge^{2-l(\gamma)}V^\gamma \otimes
\wedge^{l(\gamma)}N^\gamma\Big)^{Z(\gamma)},\ \gamma\in C(\Gamma),
\]
with $l(\gamma)=0,2$ satisfying $[\Pi, \Pi]=0$.

\section{PBW property for constant and linear Poisson structures}
In this section, we prove that constant and linear Poisson
structures (with a mild assumption in linear cases) on
$S(V^*)\rtimes \Gamma$ can be quantized.

\subsection{Quantization of constant Poisson structures}

Following Equation (\ref{N-P-P-T}), a Poisson bracket $\pi$ on
$S(V^*)\rtimes \Gamma$ splits into a sum of $\pi_\gamma$,
\[
\pi_0+\sum_\gamma \pi_\gamma\in (S(V^*)\otimes \wedge ^2
V)^\Gamma\oplus \Big(\bigoplus_{\gamma\in \Gamma, l(\gamma)=2}
S({V^\gamma}^*)\otimes \wedge^2 N^\gamma\Big)^\Gamma.
\]
We say that $\pi_0+\sum_\gamma \pi_\gamma$ is a constant Poisson
structure if $\pi_0\in \wedge^2 V$ and $\pi_\gamma \in \wedge^2
N^\gamma$. We notice that in this constant case the
Braverman-Gaitsgory conditions \ref{BG} reduce to only one condition
(\ref{BG2}), which means that $\pi_\gamma$ has to be a cocycle. But
this is automatically satisfied as we know from Equation
(\ref{N-P-P-T}) that an element in $(S(V^*)\otimes \wedge ^2
V)^\Gamma\oplus \Big(\bigoplus_{\gamma\in \Gamma, l(\gamma)=2}
S({V^\gamma}^*)\otimes \wedge^2 N^\gamma\Big)^\Gamma$ is closed with
respect to the differential $b_\gamma$ and $b_0=0$.

\begin{theorem}
\label{thm:quant-constant} Any constant Poisson structure of
$S(V^*)\rtimes \Gamma$ is quantizable.
\end{theorem}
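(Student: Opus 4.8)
The plan is to verify the three Braverman--Gaitsgory conditions (\ref{BG1}), (\ref{BG2}), (\ref{BG3}) for a constant Poisson structure, taking advantage of the fact that a constant cocycle has no symmetric-algebra factor, so the relevant brackets simplify drastically. Since we are in the constant case, the relation-defining data consists only of $b = \pi_0 + \sum_\gamma \pi_\gamma$ (with $\pi_0 \in \wedge^2 V$ and $\pi_\gamma \in \wedge^2 N^\gamma$) and no linear term; in the notation of Section~\ref{BG} the genuinely ``linear'' part $\pi$ of the defining relations is taken to be $0$ and the constant bracket plays the role of $b$. Consequently I would first observe that conditions (\ref{BG1}) and (\ref{BG3}) are vacuous or trivially satisfied, since every term in (\ref{BG2}) and (\ref{BG3}) involves an inner factor $\pi_\beta(x,y)$ coming from the linear data, which vanishes here, so the left-hand sides are identically zero.

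Thus the entire content reduces to condition (\ref{BG2}), which in the constant setting becomes simply $\partial_\gamma(b_\gamma) = 0$; that is, each component $b_\gamma$ must be a cocycle for the Koszul differential $\partial_\gamma$ recalled in Section~\ref{Koszul}. The key step is therefore to show that every constant cocycle is automatically $\partial_\gamma$-closed. For the identity component this is immediate because $b_0 \in \wedge^2 V$ is a constant (degree-zero in $S(V^*)$) element and $\partial_0$ lowers by multiplication by $x_i - {}^0 x_i = 0$, so $\partial_0(b_0) = 0$. For a nontrivial $\gamma$ with $l(\gamma)=2$, I would use the cohomology computation (\ref{N-P-P-T}): the space $(\wedge^2 N^\gamma)^{Z(\gamma)}$ sits inside the Hochschild cohomology $H^2(S(V^*)\rtimes\Gamma, S(V^*)\rtimes\Gamma)$ via the embedding $\iota$, and elements in the image of $\iota$ are by construction closed. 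Concretely, $b_\gamma \in \wedge^2 N^\gamma$ has no factor in $S(V^\gamma{}^*)$, and $\partial_\gamma$ multiplies by $x_i - {}^\gamma x_i$, which lands in $N^\gamma$; the resulting element pairs trivially because of the degree and the $\Lambda^{l(\gamma)}N^\gamma$ constraint, so $\partial_\gamma(b_\gamma)=0$.

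The main conceptual point, and the step I expect to require the most care, is the bookkeeping that identifies our defining relations with the abstract $(b,\pi)$ data of Braverman and Gaitsgory so that the vanishing of the higher conditions is genuinely automatic rather than merely a notational coincidence. Once the identification $\pi = 0$, $b = \pi_0 + \sum_\gamma \pi_\gamma$ is made, everything collapses: (\ref{BG1}) and (\ref{BG3}) hold because they only constrain $\pi$, and (\ref{BG2}) holds because its left-hand side vanishes while its right-hand side $\partial_\gamma(b_\gamma)$ is zero by closedness. I would then invoke the Braverman--Gaitsgory theorem as stated in Section~\ref{BG} to conclude that $Gr(A) \cong S(V^*)\rtimes\Gamma$, i.e.\ that $A$ satisfies the PBW property and hence defines a quantization of the constant Poisson structure, as claimed in Theorem~\ref{thm:quant-constant}.
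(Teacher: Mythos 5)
Your proposal is correct and follows essentially the same route as the paper: identify the constant Poisson structure with the $b$-term of the Braverman--Gaitsgory data (so the linear $\pi$-term vanishes), observe that (\ref{BG1}) and (\ref{BG3}) become vacuous and (\ref{BG2}) reduces to $\partial_\gamma(b_\gamma)=0$, and deduce the latter from the closedness of elements of $\wedge^2V$ and $\wedge^2 N^\gamma$ established by Equation (\ref{N-P-P-T}). The only difference is that you spell out the bookkeeping and the cocycle verification slightly more explicitly than the paper, which simply refers to the preceding discussion.
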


\begin{proof}
According to the above explanation, we know that any constant
Poisson structure satisfies the Braverman-Gaitsgory conditions
(\ref{BG1})-(\ref{BG3}). This implies that the quotient algebra
\[
H_\pi:= T(V^*)\rtimes \Gamma[[\hbar]]/\langle x\otimes y-y\otimes
x-\hbar(\pi_0(x,y)+\sum_{\gamma, l(\gamma)=2}
\pi_\gamma(x,y)\gamma)\rangle
\]
has PBW property, which defines a deformation quantization of the
algebra $S(V^*)\rtimes \Gamma[[\hbar]]$ with respect to the Poisson
structure $\pi=\pi_0+\sum_{\gamma\in \Gamma,
l(\gamma)=2}\pi_\gamma$.
\end{proof}

We remark that the PBW property of the algebra $H_\pi$ is checked in
Etingof-Ginzburg \cite{E-G}. Our proof is evident by using the
results from \cite{N-P-P-T}.
\subsection{Quantization of linear Poisson structures-abelian case}

In this subsection, we will assume that $\Gamma$ is an abelian group
which acts faithfully on $V$. According to representation theory of
a finite abelian group, $V$ is decomposed into a direct sum of 1 or
2 real dimensional subspaces where $\Gamma$ acts irreducibly.
$\gamma$ acts on 1 dimensional subspace with eigenvalue 1 or -1, and
on 2 dimensional subspace by rotation of finite order.

Let $\pi$ be a Poisson structure on $S(V^*)\rtimes \Gamma$, and
denote $\pi_0$ its identity component. The $\gamma$-components of
$\pi $ are null whenever $l(\gamma)\ne 0,2$ and take values in
$V^\gamma$. As we have remarked that if $l(\gamma)=1$, then $\gamma$
action on $N^\gamma$ has eigenvalue -1. There will not be any
nonzero element in ${V^\gamma}^*\otimes V^\gamma\otimes N^\gamma$
invariant under $\gamma$. Therefore, $\pi_\gamma$ is possible
nonzero only when $l(\gamma)=2$ or 0. And the eigenvalues of
$\gamma$ action on $N^\gamma$ are either -1 with multiplicity 2 or
roots of unity.

\begin{lemma}\label{lem:identity}
The identity component $\pi_0$ of $\pi$ defines a Lie bracket on
$V$.
\end{lemma}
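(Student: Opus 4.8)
The plan is to extract the Jacobi identity for $\pi_0$ from the single consequence $[\pi,\pi]_e=0$ of the Poisson condition $[\pi,\pi]=0$. Since $\Gamma$ acts faithfully, the identity $e$ is the only group element with $l(e)=0$, so the projection $\textsf{pr}_e$ is the identity map and hence $[\pi,\pi]_e=\llbracket\pi,\pi\rrbracket_e$. Regarding $\pi_0\in (V^*\otimes\wedge^2 V)^\Gamma$ as a skew-symmetric bilinear bracket $\pi_0\colon V^*\times V^*\to V^*$, I want to show that $\llbracket\pi,\pi\rrbracket_e$ is, up to a nonzero scalar, exactly the Jacobiator of this bracket, so that its vanishing is precisely the Jacobi identity.

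First I would write out $\llbracket\pi,\pi\rrbracket_e(x,y,z)$ from its defining formula, where the sum runs over all factorizations $\alpha\beta=e$, that is $\alpha=\beta^{-1}$, and then split it into the term $\beta=e$ and the terms with $\beta\neq e$. In the term $\beta=e$ one has ${}^e z=z$, so $z+{}^e z=2z$, and (using that $\pi_0$ is linear in its second slot) this contribution equals
\[
2\bigl(\pi_0(\pi_0(x,y),z)+\pi_0(\pi_0(y,z),x)+\pi_0(\pi_0(z,x),y)\bigr),
\]
which is twice the Jacobiator of $\pi_0$.

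The main step — and the only place where anything has to be checked — is to show that every term with $\beta\neq e$ vanishes. Here $\alpha=\beta^{-1}\neq e$, so by the reduction established just above (only components with $l=0,2$ survive) both $l(\alpha)$ and $l(\beta)$ equal $2$, and since $\beta^{-1}$ generates the same cyclic group as $\beta$ we have $V^{\beta^{-1}}=V^\beta$ and $N^{\beta^{-1}}=N^\beta$. Thus $\pi_\beta$ takes its values in the fixed covector space ${V^\beta}^*$, which annihilates $N^\beta$, whereas $\pi_{\beta^{-1}}\in ({V^\beta}^*\otimes\wedge^2 N^\beta)^{Z(\beta)}$ pairs its arguments only against $N^\beta$. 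Writing $\pi_{\beta^{-1}}=\sum_j\psi_j\otimes(n_j\wedge m_j)$ with $\psi_j\in{V^\beta}^*$ and $n_j,m_j\in N^\beta$, the inner output $\pi_\beta(x,y)\in{V^\beta}^*$ is paired against the $N^\beta$-vectors $n_j,m_j$ and therefore contributes $0$; hence $\pi_{\beta^{-1}}(\pi_\beta(x,y),\,z+{}^\beta z)=0$, and likewise for the two cyclic permutations. So the entire $\beta\neq e$ part of the sum drops out.

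Combining the two steps, $\llbracket\pi,\pi\rrbracket_e$ is twice the Jacobiator of $\pi_0$, and the Poisson condition forces it to vanish; together with the built-in skew-symmetry coming from the $\wedge^2 V$ factor, this shows $\pi_0$ is a Lie bracket on $V^*$, equivalently on $V$ under the fixed $\Gamma$-invariant metric. I expect the whole difficulty to be concentrated in the vanishing of the mixed terms: the point is the orthogonality between the value space ${V^\beta}^*$ of $\pi_\beta$ and the normal directions $N^\beta$ that $\pi_{\beta^{-1}}$ detects, which is exactly why no $\beta\neq e$ contribution can masquerade as part of the Jacobiator.
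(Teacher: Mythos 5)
Your proposal is correct and is essentially the paper's own argument written out in full: the paper's proof is exactly the observation that $\pi_\beta$ takes values in ${V^\beta}^*$, which lies in the kernel of $\pi_{\beta^{-1}}$ (since $V^{\beta^{-1}}=V^\beta$ and ${V^\beta}^*$ annihilates $N^\beta=N^{\beta^{-1}}$), so that all cross terms in $\llbracket\pi,\pi\rrbracket_e$ with $\beta\neq e$ vanish and the condition $[\pi,\pi]_e=0$ reduces to the Jacobi identity for $\pi_0$. Your expansion of the $\beta=e$ term and the orthogonality argument for the mixed terms match the paper's reasoning exactly, just in greater detail.
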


\begin{proof}
It is a consequence of the fact that  $\pi_\gamma$ takes values in
$V^\gamma$ which is in the kernel of $\pi_{\gamma^{-1}}$. Therefore,
the condition that the Gerstenhaber bracket $[\pi,\pi]$ vanishes at
identity reduces to the Jacobi identity of $\pi_0$.
\end{proof}

\begin{lemma}\label{lem:bracket}
Let $\alpha$ and $\beta$ be elements of $\Gamma$ with
$l(\alpha)=l(\beta)=2$. Then, $\llbracket
\pi_\alpha,\pi_\beta\rrbracket =0$.
\end{lemma}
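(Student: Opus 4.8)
The plan is to exploit two features of the abelian setting: since $\Gamma$ is abelian one has $Z(\beta)=\Gamma$, so each component $\pi_\beta$ is fully $\Gamma$-invariant, and since $l(\beta)=2$ the space $\wedge^2 N^\beta$ is one-dimensional. Using the fixed $\Gamma$-invariant metric to identify $V^*\cong V$, I would first write $\pi_\beta=\xi_\beta\otimes\Omega_\beta$ with $\Omega_\beta$ spanning $\wedge^2N^\beta$ and $\xi_\beta\in {V^\beta}^*$; invariance then forces $\xi_\beta$ to transform under the one-dimensional real character $\delta_\beta:=\det(\,\cdot\,|_{N^\beta})$ of $\Gamma$, because $\Gamma$ acts on the line $\wedge^2N^\beta$ by $\delta_\beta$. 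In particular the value $\pi_\beta(u,v)$ always lies on the single line $\mathbb{R}\xi_\beta$, which sits in one isotypic component of $V^*$.

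Each summand of $\llbracket\pi_\alpha,\pi_\beta\rrbracket(x,y,z)$ has the shape $\pi_\alpha(\pi_\beta(\,\cdot\,,\cdot),\,\cdot\,)$, and $\pi_\alpha(u,\cdot)$ sees $u$ only through its orthogonal projection $P_{N^\alpha}u$ onto the $\Gamma$-stable subspace $N^\alpha$. As $\pi_\beta(\,\cdot\,,\cdot)$ is a scalar multiple of $\xi_\beta$, the whole bracket vanishes as soon as $P_{N^\alpha}\xi_\beta=0$. Since projection onto $N^\alpha$ is $\Gamma$-equivariant, $P_{N^\alpha}\xi_\beta$ again transforms by the real character $\delta_\beta$, so it is automatically zero unless $N^\alpha$ contains a line on which $\Gamma$ acts by $\delta_\beta$. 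This disposes of the two easy configurations at once: if $N^\alpha$ is an irreducible real two-dimensional representation (so $\alpha$ acts by a nontrivial rotation), it carries no one-dimensional real subrepresentation; and if $\delta_\beta$ is trivial (which happens exactly when $N^\beta$ is irreducible of dimension two, or more generally when its two $-1$-lines are isomorphic), then $N^\alpha$ contains no $\Gamma$-fixed line, because in the remaining case $\alpha$ acts by $-\mathrm{Id}$ on all of $N^\alpha$.

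The main obstacle is the surviving case where both $N^\alpha$ and $N^\beta$ are sums of two real lines on which $\alpha$, respectively $\beta$, act by $-1$, and $\xi_\beta$ does project nontrivially into $N^\alpha$; here I would argue by a character count forced by $l(\alpha)=2$. Write $N^\beta=L_1\oplus L_2$ with characters $\chi_1,\chi_2$ (so $\chi_i(\beta)=-1$), whence $\delta_\beta=\chi_1\chi_2$ and, by the previous paragraph, $(\chi_1\chi_2)(\alpha)=-1$; thus exactly one $\chi_i(\alpha)=-1$, say $\chi_1(\alpha)=-1$, so the line $L_1$ is negated by $\alpha$ and hence lies in $N^\alpha$. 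Because $\chi_2(\beta)=-1$ gives $\chi_2\ne 1$, the character $\chi_1$ differs from $\chi_1\chi_2$, so $L_1$ is not the $\xi_\beta$-line; as $l(\alpha)=2$ leaves room for only one further line $\mathbb{R}\eta$ in $N^\alpha$, we must have $L_1=\mathbb{R}\eta$, so $\eta$ transforms by $\chi_1$ and in particular ${}^{\beta}\eta=\chi_1(\beta)\eta=-\eta$. Writing $\Omega_\alpha\propto\xi_\beta\wedge\eta$, the second argument of $\pi_\alpha$ enters only through its $\eta$-component, and for $w=z+{}^{\beta}z$ that component equals $(1+\chi_1(\beta))\,w_\eta=0$. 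Hence every summand vanishes and $\llbracket\pi_\alpha,\pi_\beta\rrbracket=0$. The crux is exactly this last bookkeeping: the codimension-two hypothesis on $\alpha$ is what pins down the character of $\eta$ and produces the annihilating factor $1+\chi_1(\beta)$.
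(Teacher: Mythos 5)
Your proof is correct. It is organized differently from the paper's: the paper splits into the three geometric cases $\dim(N^\alpha\cap N^\beta)=0,1,2$ and in each one uses $\beta$-invariance to locate the values of $\pi_\alpha$ and $\pi_\beta$, whereas you extract a single uniform vanishing criterion --- the bracket dies unless the character $\delta_\beta=\det(\cdot|_{N^\beta})$ occurs in $N^\alpha$ --- which disposes at once of the cases $N^\alpha\cap N^\beta=\{0\}$ and $N^\alpha=N^\beta$ and of every configuration in which $N^\alpha$ or $N^\beta$ is irreducible. The one surviving configuration is exactly the paper's case $\dim(N^\alpha\cap N^\beta)=1$ (your $L_1$ is the common line $N_1$, your $\delta_\beta$-line is $N_2=N^\alpha\cap V^\beta$), and your closing observation --- that the second slot of $\pi_\alpha$ sees $z+{}^{\beta}z$ only through its $\eta$-component, which carries the factor $1+\chi_1(\beta)=0$ --- is precisely the computation the paper leaves as ``straightforward to check,'' so making it explicit is a real gain in readability. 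The character-theoretic packaging buys uniformity at the cost of two standard facts used silently: that distinct real characters give orthogonal isotypic lines for the invariant metric (needed so that $\xi_\beta$ pairs to zero with $\eta$), and that a finite abelian group acting irreducibly on a real plane acts by rotations, so $\delta_\beta\neq 1$ forces $N^\beta$ to split into two non-isomorphic lines. Both are easy, and the second can even be bypassed: in your surviving case $\delta_\beta(\alpha)=-1$ makes $\alpha|_{N^\beta}$ a reflection whose $\pm 1$-eigenlines are $\Gamma$-stable, which produces $L_1$ and $L_2$ directly.
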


\begin{proof}
Let $x$ and $y$ be the coordinates on $N^\alpha$. It follows from
the $\Gamma$-invariance for Poisson structure that $\pi_\alpha$ has
to be $\beta$-invariant as $\alpha$ commutes with $\beta$ for any
$\beta\in \Gamma$:
\begin{equation}\label{eq:inv}
\pi_\alpha(\,^\beta\! x, \,^\beta\! y)=\,^\beta \!\pi_\alpha(x,y).
\end{equation}

We observe $\beta$ preserves $V^\alpha$ and $N^\alpha$ as $\beta$
commutes with $\alpha$. If $l(\beta)=2$, there are three
possibilities, 1) $N^\alpha\cap N^\beta=\{0\}$, 2)
$\dim(N^\alpha\cap N^\beta)=1$, 3) $N^\alpha=N^\beta$.

If $N^\alpha\cap N^\beta=\{0\}$, then $x,y$ are $\beta$ invariant
for $x,y$ in ${N^\alpha}^*$. Hence by Equation (\ref{eq:inv}),
$\pi_\alpha(x,y)=\,^\beta \pi_{\alpha}(x,y)$, which shows
$\pi_\alpha(x,y)$ is $\beta$ invariant. Similarly, we know that
$\pi_\beta$ takes value in $V^\alpha$. This shows that $\llbracket
\pi_\alpha, \pi_\beta\rrbracket=0$.

If $\dim(N^\alpha\cap N^\beta)=1$, then we know that both $\alpha$
and $\beta$ preserves $N^{\alpha, \beta}:=N^\alpha+N^\beta$ which is
of 3 dimension. Furthermore, we conclude that $\beta$'s ($\alpha$'s)
action on $N^\alpha$ (on $N^\beta$) has eigenvalue 1 and -1. Hence,
$N^{\alpha, \beta}$ is decomposed into a direct sum of $N_1\oplus
N_2\oplus N_3$ such that $\alpha$ acts on $N_1$ and $N_2$ by -1, and
 $N_3$ by 1, and $\beta$ acts on $N_1$ and $N_3$ by -1, and $N_2$ by
1. By Equation (\ref{eq:inv}), we know that $\pi_\alpha(^\beta x,\
^\beta y)=-\pi_\alpha(x,y)=\, ^\beta \pi_\alpha(x,y)$. This shows
that $\beta$ acts on $\pi_\alpha(x,y)$ by -1. Similarly $\alpha$
acts on the image of $\pi_\beta$ by -1. This shows that
$\pi_\alpha\in N_3^*\otimes N_1\otimes N_2$ and $\pi_\beta \in
N_2^*\otimes N_1\otimes N_3$ as $N_3^*$ (and $N_2^*$) is the only
1-dim subspace of ${V^{\alpha}}^*$ (of ${V^\beta}^*$) with a
nontrivial $\beta$ ($\alpha$) action. It is straightforward to check
that $\llbracket \pi_\alpha, \pi_\beta\rrbracket=0$ in $V^*\otimes
\wedge^3 V$.

If $N^\alpha=N^\beta$, $\beta$ acts on $N^\alpha=N^\beta$ with
determinant 1. This shows that $\pi_\alpha(^\beta x, ^\beta
y)=\pi_\alpha(x,y)$. By Equation (\ref{eq:inv}), we see that
$\pi_\alpha$ takes value in $V^\beta$, and similarly $\pi_\beta$
takes value in $V^\alpha$. Direct computation shows that $\llbracket
\pi_\alpha, \pi_\beta\rrbracket=0$.



\end{proof}

\begin{theorem}\label{thm:linear-abel}
Let $\Gamma$ be a finite abelian group which acts faithfully on a
finite dimensional vector space $V$. Then any linear Poisson
structure $\pi$ of $S(V^*)\rtimes\Gamma$ is quantizable.
\end{theorem}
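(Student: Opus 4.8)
The plan is to verify the three Braverman--Gaitsgory conditions (\ref{BG1})--(\ref{BG3}) for the relations attached to $\pi=\pi_0+\sum_{l(\gamma)=2}\pi_\gamma$, producing along the way the required constant correction term $b$. Condition (\ref{BG1}) is immediate: for the identity component $\partial_0$ vanishes identically, while for $l(\gamma)=2$ the component $\pi_\gamma$ lies in $({V^\gamma}^*\otimes\wedge^2 N^\gamma)^{Z(\gamma)}$, i.e.\ in the image of the embedding $\iota$, and is therefore a $\partial_\gamma$-cocycle. Hence each $\pi_\gamma$ is closed and (\ref{BG1}) holds with no extra hypothesis.

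For (\ref{BG2}) I would expand $\llbracket\pi,\pi\rrbracket_\gamma$ bilinearly over pairs $\alpha\beta=\gamma$ with $\pi_\alpha,\pi_\beta\neq 0$, so each of $\alpha,\beta$ is either the identity or satisfies $l=2$. Lemma~\ref{lem:bracket} kills every term in which both factors have $l=2$, and when $\gamma$ is the identity the surviving term $\llbracket\pi_0,\pi_0\rrbracket$ vanishes by the Jacobi identity of Lemma~\ref{lem:identity} (the cross terms $\llbracket\pi_\alpha,\pi_{\alpha^{-1}}\rrbracket$ also vanish by Lemma~\ref{lem:bracket}), so $b_0=0$. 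For $l(\gamma)=2$ only the mixed contribution $\llbracket\pi_0,\pi_\gamma\rrbracket_\gamma$ remains. Since $\pi$ is a Poisson structure its Gerstenhaber bracket vanishes, so $\textsf{pr}_\gamma\circ\llbracket\pi,\pi\rrbracket_\gamma=[\pi,\pi]_\gamma=0$; as $\llbracket\pi,\pi\rrbracket_\gamma$ is a cocycle (the Gerstenhaber bracket is compatible with $\partial_\gamma$ and $\pi$ is closed), its class is trivial and it is a coboundary. Because $\llbracket\pi,\pi\rrbracket_\gamma$ has polynomial degree $1$ and $\partial_\gamma$ raises polynomial degree by one, a primitive may be chosen of polynomial degree $0$, that is $b_\gamma\in\wedge^2 V$; averaging over $\Gamma$ makes $\{b_\gamma\}$ invariant. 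This yields the nonzero constant term $b$ with $\partial_\gamma(b_\gamma)=\llbracket\pi,\pi\rrbracket_\gamma$, which is exactly (\ref{BG2}) and the ``new phenomenon'' announced in the introduction.

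The hard part is (\ref{BG3}), namely $\llbracket b,\pi\rrbracket_\gamma=0$ for every $\gamma$. Expanding again over pairs, $b_\alpha\neq 0$ forces $l(\alpha)=2$ and $\pi_\beta\neq 0$ forces $\beta=e$ or $l(\beta)=2$. The terms with $l(\beta)=2$ (which contribute to $l(\gamma)=4$) should vanish by the support-and-invariance analysis used in the proof of Lemma~\ref{lem:bracket}, applied to the $\wedge^2N^\alpha$ and $\wedge^2N^\beta$ parts. This reduces (\ref{BG3}) to the vanishing of $\llbracket b_\gamma,\pi_0\rrbracket_\gamma$ for $l(\gamma)=2$. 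Here I would exploit that $\llbracket b_\gamma,\pi_0\rrbracket_\gamma\in\wedge^3V$ has polynomial degree $0$: no polynomial-degree-$0$ cochain is a nonzero coboundary, so such a cochain vanishes as soon as it is a cocycle with trivial cohomology class. That it is a cocycle follows from the chain-level Jacobi identity together with $\partial_\gamma b_\gamma=\llbracket\pi_0,\pi_\gamma\rrbracket$ and $\llbracket\pi_0,\pi_0\rrbracket=0$; the vanishing of its class is then checked from the Schouten--Nijenhuis formula of Section~\ref{Gerstenhaber} on the two-dimensional space $N^\gamma$, using the $\gamma$-invariance of $\pi_0$ and of $b_\gamma$ and, if needed, adjusting $b_\gamma$ by a closed constant $2$-vector (which does not affect (\ref{BG2})).

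I expect this final cohomological vanishing to be the main obstacle: it is precisely where the interaction of the genuinely nontrivial constant term $b_\gamma$ with the Lie bracket $\pi_0$ must be controlled, and where the abelian hypothesis does the real work, since it forces $\gamma$ to act on $N^\gamma$ as a rotation or by $-\mathrm{id}$ and thereby severely constrains the operators on $N^\gamma$ that commute with $\gamma$. Once all three Braverman--Gaitsgory conditions are established we obtain $Gr(A)\cong S(V^*)\rtimes\Gamma$, so that $A$ is the sought quantization of $\pi$.
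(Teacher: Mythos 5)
Your conditions (\ref{BG1}) and the bookkeeping of which pairs $(\alpha,\beta)$ can contribute are fine, but the overall architecture misses the key point of the abelian case and, as a result, leaves condition (\ref{BG3}) unproven. The paper's proof shows that after Lemmas \ref{lem:identity} and \ref{lem:bracket} the only surviving contribution to $\llbracket\pi,\pi\rrbracket_\gamma$ is the mixed $\pi_0$--$\pi_\gamma$ term, and then establishes that this term vanishes \emph{identically} (not merely cohomologically) by a case analysis on how many of $x,y,z$ lie in $V^\gamma$: the cases with at least two arguments in $V^\gamma$ die because $\pi_\gamma$ kills $V^\gamma$ and $\pi_0$ is $\gamma$-invariant; the case with three arguments in $N^\gamma$ is empty since $\dim N^\gamma=2$; and in the remaining case ($x,y\in N^\gamma$, $z\in V^\gamma$) the value lies in $V^\gamma$, where the projection $\textsf{pr}_\gamma$ is injective, so $[\pi,\pi]_\gamma=0$ forces $\llbracket\pi,\pi\rrbracket_\gamma(x,y,z)=0$. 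Hence one may take $b=0$, and (\ref{BG2}), (\ref{BG3}) hold trivially. You instead treat $\llbracket\pi,\pi\rrbracket_\gamma$ as a generically nonzero coboundary and introduce a nonzero primitive $b_\gamma$; this also misattributes the ``new phenomenon'' of the introduction, which occurs only for the nonabelian groups $\Gamma_n$ of subsection \ref{sec:ex}, not in the abelian case.

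The concrete gap is your treatment of (\ref{BG3}). Having committed to a possibly nonzero $b_\gamma$, you must show $\llbracket b,\pi\rrbracket_\gamma=0$, and your argument for this is a plan rather than a proof: the claim that the $l(\beta)=2$ terms ``should vanish by the support-and-invariance analysis'' of Lemma \ref{lem:bracket} is not established (that lemma is about $\llbracket\pi_\alpha,\pi_\beta\rrbracket$ with both factors linear, not about a constant $b_\alpha$ against a linear $\pi_\beta$), and the final step --- showing that the cohomology class of $\llbracket b_\gamma,\pi_0\rrbracket_\gamma$ vanishes, possibly ``after adjusting $b_\gamma$ by a closed constant $2$-vector'' --- is asserted, not checked. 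For $l(\gamma)=2$ the degree-$3$ cohomology at $\gamma$ contains the constant part of $({V^\gamma}\otimes\wedge^2N^\gamma)^{Z(\gamma)}$, which need not vanish, so $\textsf{pr}_\gamma\llbracket b_\gamma,\pi_0\rrbracket_\gamma=0$ is a genuine condition requiring proof. The clean repair is simply to prove the pointwise vanishing of $\llbracket\pi,\pi\rrbracket_\gamma$ as above, take $b=0$, and dispense with (\ref{BG3}) altogether.
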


\begin{proof}
Following the above lemmas, $\llbracket\pi,\pi\rrbracket_\gamma$ reduces to :
$$
\begin{array}{ccl}
\llbracket\pi,\pi\rrbracket_\gamma(x,y,z) &=& \pi_0(\pi_\gamma(x,y),z+^\gamma\!z)+\pi_0(\pi_\gamma(y,z),x+^\gamma\!x)+\pi_0(\pi_\gamma(z,x),y+^\gamma\!y) \\
&& +2\times
(\pi_\gamma(\pi_0(x,y),z)+\pi_\gamma(\pi_0(y,z),x)+\pi_\gamma(\pi_0(z,x),y)).
\end{array}
$$

This expression is zero whenever $l(\gamma)\ne2$. Suppose now that
$l(\gamma)=2$. If $x,y,z$ are all in $V^\gamma$, we get $0$ since
$V^\gamma$ is  the kernel of $\pi_\gamma$. If two of $x,y,z$ are in
$V^\gamma$, we also get $0$ for the same reason and because $\pi_0$
is $\gamma$-invariant. Suppose now that $x$ and $y$ are in
$N^\gamma$ and that $z$ is $\gamma$-invariant. Then,
$\llbracket\pi,\pi\rrbracket_\gamma(x,y,z)$ lies in $V^\gamma$, and
it is zero from the fact that $\pi$ is a Poisson bracket (Section
\ref{Gerstenhaber}): $[\pi,\pi]_\gamma=0$.
\end{proof}

\subsection{An important example}\label{sec:ex}

In order to prove the quantization theorem for general linear
Poisson structures, we consider an important example in this
subsection.

Let $\rho=\exp(\frac{2\pi i}{2n+1})$. Denote
$\alpha_k=\left(\begin{array}{ll}\rho^k&0\\
0&\rho^{-k}\end{array}\right)$, and $\beta_k=\left(\begin{array}{ll}0&\rho^{-k}\\
\rho^{k}&0\end{array}\right)$, and  $\Gamma_n=\{\alpha_k,
\beta_l:0\leq k,l\leq 2n \}$. $\Gamma_n$ is a finite group of order
$4n+2$ acting faithfully on $V=\mathbb{C}^2$, a complex 2-dim and
real 4-dim vector space. $\alpha_k$'s eigenvalues are $\rho^k$ and
$\rho^{-k}$, and $\beta_k$'s eigenvalues are $\pm 1$. Let $z_1,z_2$
be complex coordinate functions on $V$.

We consider linear Poisson structures on $S(V^*)\rtimes \Gamma_n$.
We first look at the Poisson structure on the identity component. As
$\alpha_k$ acts on $V$ diagonally, $\alpha_k$ acts on $V^*\otimes
\wedge^2 V$ also diagonally with eigenvalues $\rho^{3k}, \rho^{k},
\rho^{-k}$, and $\rho^{-3k}$. If $\rho^{3k}\ne 1$, there is no none
zero linear bivector field on $V$, which is $\alpha_k$ invariant.
Accordingly, if $\rho^3\ne 1$, there is no $\Gamma_n$-invariant
linear Poisson structure $\pi_0$ on $V$. If $\rho^3=1$, then $\pi_0$
is a linear combination of $z_1\bar{\partial}_1\wedge\partial_2$,
$z_2\partial_1\wedge\bar{\partial}_2$, and
$\bar{z}_1\partial_1\wedge \bar{\partial}_2$,
$\bar{z}_2\bar{\partial}_1\wedge
\partial_1$. If we assume that $\pi_0$ to be real, then we have
\[
\pi_0=az_1\bar{\partial}_1\wedge\partial_2+\bar{a}\bar{z}_1\partial_1\wedge
\bar{\partial}_2+bz_2\partial_1\wedge\bar{\partial}_2+\bar{b}\bar{z}_2\bar{\partial}_1\wedge
\partial_2.
\]
Furthermore by invariance with respect to the $\beta_k's$ action, we
have $a=-b$ and $\bar{a}=-\bar{b}$ in the above equation, i.e.
\begin{equation}\label{eq:pi-0}
\pi_0=a\left(z_1\bar{\partial}_1\wedge\partial_2-z_2\partial_1\wedge\bar{\partial}_2\right)+\bar{a}\left(\bar{z}_1\partial_1\wedge
\bar{\partial}_2-\bar{z}_2\bar{\partial}_1\wedge
\partial_2\right).
\end{equation}

Observe $\beta_k$ has real codimesion=2 fixed point subspace, while
$\alpha_k$ only fixes the origin of $V$. $V^k:=V^{\beta_k}$ is
determined by $\rho^{k}z_1-z_2=\rho^{-k}\bar{z}_1-\bar{z}_2=0$. The
normal subspace $N^k$ to $V^k$ is determined by
$\rho^kz_1+z_2=\rho^{-k}\bar{z}_1+\bar{z}_2=0$. Vector fields along
$N^k$ are spanned by $\rho^{-k}\partial_{1}-\partial_{2}$ and
$\rho^k\bar{\partial}_{1}-\bar{\partial}_{2}$. Therefore, the
Poisson structure at $\beta_k$ component can be written as
\[
\Pi_k=\left[c_k(\rho^kz_1+z_2)-\bar{c}_k(\rho^{-k}\bar{z}_1+\bar{z}_2)\right]
\left(\rho^{-k}\partial_{1}-\partial_{2}\right)\wedge
\left(\rho^k\bar{\partial}_{1}-\bar{\partial}_{2}\right).
\]
Furthermore, as $\alpha_l\beta_k\alpha_l^{-1}=\alpha_{k-2l}$, by
invariance of $\Pi_k$ with respect to the conjugation action of
$\Gamma_k$,
 $c_{2k}=c_0\rho^{-k}, 0\leq k\leq 2n $. Therefore, we
have
\begin{equation}\label{eq:pi-2k}
\begin{split}
\Pi_{2k}=&\left[c_0(\rho^kz_1+\rho^{-k}z_2)-\bar{c}_0(\rho^{-k}\bar{z}_1+\rho^k\bar{z}_2)\right]\\
&\left(\partial_{1}\wedge\bar{\partial}_{1}-\rho^{-2k}\partial_{1}\wedge
\bar{\partial}_{2}-\rho^{2k}\partial_{2}\wedge
\bar{\partial}_{1}+\partial_{2}\wedge
\bar{\partial}_{2}\right),\qquad 0\leq k\leq 2n.
\end{split}
\end{equation}
In summary, a Poisson structure $\Pi$ on $S(V^*)\rtimes \Gamma_n$ is
of the form, $\Pi=\pi_0+\sum_{k=0}^{2n}\Pi_{2k}$ where $\Pi_{2k}$ is
defined as in Equation (\ref{eq:pi-2k}), and $\pi_0$ vanishes unless
$n=3$. When $n=3$, $\pi_0$ is defined as in Equation (\ref{eq:pi-0})

By the same reason as in the proof of Theorem \ref{thm:linear-abel},
we conclude that if we assume $[\pi_0, \Pi_k]=0$, then
$\llbracket\pi_0, \Pi_k\rrbracket +\llbracket \Pi_k,
\pi_0\rrbracket=0$ for any $k$.

From Equation (\ref{BG2}), we see that as $\dim(N^\beta)=2$ the
$\llbracket\pi_\alpha, \pi_\beta\rrbracket(x,y,z)=0$ if $x,y,z$ are
all along the normal direction $N^\beta$. Furthermore, as
$\pi_\beta$ is a multiple of the highest wedge power of the normal
direction $N^\beta$, to have non-zero outcome two of the three
$x,y,z$ have to be from the normal direction $N^\beta$. This implies
that $\llbracket\pi_\alpha, \pi_\beta\rrbracket$ as an element in
$V^*\otimes \wedge^3 V$ is equal to the Schouten-Nijenhuis bracket
$[\pi_\alpha, \pi_\beta]$.

We use this observation to compute $\llbracket \Pi_{2k},
\Pi_{2l}\rrbracket$. A long but straightforward computation leads to
the following result at $\alpha_{2k-2l}$,
\[
\begin{split}
\llbracket\Pi_{2k},
\Pi_{2l}\rrbracket=&\left[c_0(\rho^k z_1+\rho^{-k}z_2)-\bar{c}_0(\rho^{-k}\bar{z}_1+\rho^k\bar{z}_2)\right]\\
         \times &\Big[(2\bar{c}_0\rho^l-\bar{c}_0\rho^{-l+2k}-\bar{c}_0\rho^{3l-2k})\partial_1\wedge
           \partial_2\wedge \bar{\partial}_1\\
         +&(-2\bar{c}_0\rho^{-l}+\bar{c}_0\rho^{l-2k}+\bar{c}_0\rho^{-3l+2k})
           \partial_1\wedge\partial_2\wedge\bar{\partial}_2\\
         +&(2c_0\rho^{-l}-c_0\rho^{l-2k}-c_0\rho^{-3l+2k})\bar{\partial}_1\wedge
           \partial_1\wedge \bar{\partial}_2\\
         +&(-2c_0\rho^l+c_0\rho^{-l+2k}+c_0\rho^{3l-2k})\bar{\partial}_1\wedge
           \partial_2\wedge \bar{\partial}_2\Big].
\end{split}
\]

Define for $0\leq k\leq 2n$,
\begin{equation}\label{eq:B}
B_{2k}:=(2n+1)(\rho^k-\rho^{-k})\left[-|c_0|^2\partial_2\wedge
\bar{\partial}_2+|c_0|^2\partial_1\wedge
\bar{\partial}_1+(\bar{c}_0)^2\partial_1\wedge\partial_2-c_0^2\bar{\partial}_1\wedge\bar{\partial}_2\right].
\end{equation}
With a long but straightforward computation, we are able to prove
\[
\sum_{p-q=2k, 0\leq q\leq 2n}\llbracket\Pi_{2p},
\Pi_{2q}\rrbracket=\partial^{\alpha_{2k}}B_{2k}.
\]
And it is not difficult to compute that
\begin{equation}\label{eq:ex-b}
\llbracket B_{2k}, \Pi_{2l}\rrbracket=0,\qquad 0\leq k,l\leq 2n;
\end{equation}
and
\[
\llbracket B_{2k}, \pi_0\rrbracket=0,\qquad 0\leq k \leq 2n,\ n=3.
\]
Therefore, we conclude with the following proposition
\begin{proposition}\label{prop:quant-ex}
For $\Gamma_n$ action on $V=\mathbb C^2$, any linear Poisson
structures on $S(V^*)\rtimes \Gamma_n$ can be quantized.
\end{proposition}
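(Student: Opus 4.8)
The plan is to verify the Braverman–Gaitsgory conditions (\ref{BG1})--(\ref{BG3}) for the Poisson structure $\Pi=\pi_0+\sum_{k=0}^{2n}\Pi_{2k}$, taking as correction term the constant bivector $b=\sum_{k=0}^{2n}B_{2k}$, with each $B_{2k}$ of (\ref{eq:B}) placed at the group element $\alpha_{2k}$. Everything is organized by a clean decomposition of $\llbracket\Pi,\Pi\rrbracket$ according to group component. Since $e\cdot\beta_{2k}=\beta_{2k}$ while a product of two $\beta$'s is always an $\alpha$, the identity component of $\llbracket\Pi,\Pi\rrbracket$ collects $\llbracket\pi_0,\pi_0\rrbracket$ together with the diagonal terms $\llbracket\Pi_{2p},\Pi_{2p}\rrbracket$; each $\beta_{2k}$-component collects only the cross terms $\llbracket\pi_0,\Pi_{2k}\rrbracket+\llbracket\Pi_{2k},\pi_0\rrbracket$; and each nontrivial $\alpha_{2k}$-component collects exactly $\sum_{p-q=2k}\llbracket\Pi_{2p},\Pi_{2q}\rrbracket$.

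Condition (\ref{BG1}) is immediate: $\partial_0=0$ on the identity, and each $\Pi_{2k}$ is the top exterior power of the two-dimensional normal space $N^{\beta_{2k}}$ times a linear function on $V^{\beta_{2k}}$, hence a cocycle representing its class in (\ref{N-P-P-T}). For (\ref{BG2}) I treat the three families separately. On each $\beta_{2k}$-component, because $\Pi$ is a Poisson structure the $\beta_{2k}$-part of $[\Pi,\Pi]=0$ forces $[\pi_0,\Pi_{2k}]=0$, whence, exactly as in the proof of Theorem \ref{thm:linear-abel}, the chain-level identity $\llbracket\pi_0,\Pi_{2k}\rrbracket+\llbracket\Pi_{2k},\pi_0\rrbracket=0$; since $b$ has no $\beta$-component this reads $0=\partial_{\beta_{2k}}b_{\beta_{2k}}$. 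On the identity, $\llbracket\pi_0,\pi_0\rrbracket=0$ by the Jacobi identity of Lemma \ref{lem:identity}, while $B_0=0$ forces $\sum_{p-q=0}\llbracket\Pi_{2p},\Pi_{2q}\rrbracket=\partial^{\alpha_0}B_0=0$, so $\llbracket\Pi,\Pi\rrbracket_e=0=\partial_e b_e$. The essential family consists of the $\alpha_{2k}$-components with $k\ne0$, where (\ref{BG2}) becomes precisely the coboundary identity $\sum_{p-q=2k,\,0\le q\le 2n}\llbracket\Pi_{2p},\Pi_{2q}\rrbracket=\partial^{\alpha_{2k}}B_{2k}$ established above.

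Finally, condition (\ref{BG3}) reduces to $\llbracket b,\Pi\rrbracket=\sum_{k,l}\llbracket B_{2k},\Pi_{2l}\rrbracket+\sum_k\llbracket B_{2k},\pi_0\rrbracket$, and both families of terms vanish by (\ref{eq:ex-b}) and by $\llbracket B_{2k},\pi_0\rrbracket=0$ (the latter only relevant when $n=3$, the single case in which $\pi_0\ne0$). With (\ref{BG1})--(\ref{BG3}) in force, $Gr(A)\cong S(V^*)\rtimes\Gamma_n$, so the quotient algebra is a quantization of $\Pi$.

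The main obstacle is the material already packaged in the displayed formulas: computing the $\beta$--$\beta$ brackets $\llbracket\Pi_{2p},\Pi_{2q}\rrbracket$ explicitly and, above all, guessing the primitive $B_{2k}$ so that the summed self-bracket is exactly its coboundary. This is where the genuinely new feature appears, namely a nontrivial coboundary term $b$ for $\llbracket\Pi,\Pi\rrbracket$ that is absent for Lie algebras and symplectic reflection algebras; its legitimacy hinges on the compatibility $\llbracket b,\Pi\rrbracket=0$, whose verification (via (\ref{eq:ex-b})) is the other long but routine computation.
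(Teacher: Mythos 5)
Your proposal is correct and follows essentially the same route as the paper: the paper's proof of Proposition \ref{prop:quant-ex} is a one-line appeal to the computations of subsection \ref{sec:ex} (the explicit brackets $\llbracket\Pi_{2p},\Pi_{2q}\rrbracket$, the coboundary identity $\sum_{p-q=2k}\llbracket\Pi_{2p},\Pi_{2q}\rrbracket=\partial^{\alpha_{2k}}B_{2k}$, and the vanishing statements (\ref{eq:ex-b})), and your write-up simply assembles those same facts, organized by group component, into an explicit verification of (\ref{BG1})--(\ref{BG3}). The only added value is that you make the sorting of terms by component type ($e$, $\beta_{2k}$, $\alpha_{2k}$ with $k\ne 0$) and the role of $B_0=0$ explicit, which the paper leaves implicit.
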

\begin{proof}
By the above computation, we see that the relation defining
\[
\begin{split}
H_\Pi:=& T(V)\rtimes \Gamma_n[[\hbar]]\\
       &\Big/\Big\langle
x\otimes y-y\otimes x-\hbar\Big(\pi_0(x,y)+\sum_{0\leq k\leq 2n}
\Pi_{2k}(x,y)\beta_{2k}\Big)-\hbar^2(\sum_{1\leq k\leq
2n}B_{2k}(x,y)\alpha_{2k})\Big\rangle
\end{split}
\]
satisfies the Braverman-Gaitsory conditions (\ref{BG1})-(\ref{BG3}).
This implies that $H_\Pi$ has PBW property, which shows that $H_\Pi$
is a deformation quantization of $S(V^*)\rtimes \Gamma_n$ along the
direction defined by $\Pi$.
\end{proof}

\begin{remark}We point out that in the proof of Proposition
\ref{prop:quant-ex}, there have to be nonzero terms $B_{2k}$ for
$0\leq k\leq 2n$ as $[\pi, \pi]$ is not zero. This is different from
the standard PBW theorem for Lie algebras where $B_{2k}$ can be
chosen to be zero. We will see in the following subsection that this
example is essentially the only case that $B_{2k}$ has to be
nonzero.
\end{remark}

\subsection{Quantization of linear Poisson structures-general case}

In this subsection $\Gamma$ is a finite group (not necessary
abelian) acting faithfully on a vector space $V$. We assume that $V$
is equipped with a $\Gamma$-invariant complex structure. We prove
the following theorem.

\begin{theorem}
\label{thm:quant-liniear}Let $\Gamma$ be a finite group acting
faithfully on a complex vector space $V$. Any real linear Poisson
structure on $S(V^*)\rtimes \Gamma$ is quantizable.
\end{theorem}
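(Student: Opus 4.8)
The plan is to verify the three Braverman--Gaitsgory conditions (\ref{BG1})--(\ref{BG3}) for a suitable $\Gamma$-invariant constant correction $b=\sum_\gamma b_\gamma\gamma$, exactly as was done in Theorem \ref{thm:quant-constant}, Theorem \ref{thm:linear-abel} and Proposition \ref{prop:quant-ex}. Since $V$ carries a $\Gamma$-invariant complex structure and $\Gamma$ acts $\mathbb{C}$-linearly, each $V^\gamma$ and $N^\gamma$ is a complex subspace, so $l(\gamma)$ is always even; a linear Poisson structure $\pi=\pi_0+\sum_{l(\gamma)=2}\pi_\gamma$ therefore has components only at $l(\gamma)\in\{0,2\}$, and the chain-level bracket $\llbracket\pi,\pi\rrbracket_\gamma$ is supported only at $l(\gamma)\in\{0,2,4\}$. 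Using the complexification trick (\ref{eq:complex}) I would pass to $\mathbb{C}$-coefficients, where every $\gamma$ is diagonalizable, prove the complex statement, and deduce the real one. Condition (\ref{BG1}) is automatic because each $\pi_\gamma$ is a cocycle by the description (\ref{N-P-P-T}); so the work is to produce $b$ making (\ref{BG2}) hold and then to check (\ref{BG3}).

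First I would dispose of the components with $l(\gamma)\in\{0,2\}$, where I claim $\llbracket\pi,\pi\rrbracket_\gamma$ vanishes on the nose, so that $b_\gamma=0$ there. For $\gamma=e$ this is Lemma \ref{lem:identity}: the cross terms $\pi_\alpha\circ\pi_{\alpha^{-1}}$ drop out because $\pi_{\alpha^{-1}}$ takes values in $V^{\alpha^{-1}}=V^\alpha$, which lies in the kernel of $\pi_\alpha$, leaving only the Jacobiator of $\pi_0$, killed by $[\pi,\pi]_e=0$. For $l(\gamma)=2$ I would rerun the case analysis of Theorem \ref{thm:linear-abel} (arguments lying in $V^\gamma$ versus $N^\gamma$): this only involves the single element $\gamma$, its fixed and normal spaces, and the $\gamma$-invariance of $\pi_0$, hence is insensitive to whether $\Gamma$ is abelian. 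Together with the same-plane estimate of Lemma \ref{lem:bracket} for products $\pi_\alpha\pi_\beta$ with $N^\alpha=N^\beta$, this forces $\llbracket\pi,\pi\rrbracket_\gamma=0$ whenever $l(\gamma)=2$.

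The essential case is $l(\gamma)=4$, where $\gamma=\alpha\beta$ with $l(\alpha)=l(\beta)=2$ and $N^\alpha\cap N^\beta=0$. Here $\textsf{pr}_\gamma$ annihilates every linear $3$-cochain (its target exterior degree $3-l(\gamma)$ is negative), so $[\pi,\pi]_\gamma=0$ automatically and the Koszul cohomology of $\partial_\gamma$ vanishes in cochain degree $3$. Consequently, once I know $\llbracket\pi,\pi\rrbracket_\gamma$ is a $\partial_\gamma$-cocycle, it is a coboundary $\partial_\gamma(c)$; and since $\partial_\gamma$ raises the symmetric degree by one while $\llbracket\pi,\pi\rrbracket_\gamma$ is linear, the degree-zero part of $c$ is a \emph{constant} primitive, which I take as $b_\gamma$ (averaging over $\Gamma$, using $|\Gamma|^{-1}\in\mathbb{R}$, to keep $b$ invariant). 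The same degree count makes (\ref{BG3}) essentially formal: $\llbracket b,\pi\rrbracket_\gamma$ is a \emph{constant} $3$-cochain, and its class $[b,\pi]_\gamma=\textsf{pr}_\gamma\circ\llbracket b,\pi\rrbracket_\gamma$ can be nonzero only when $l(\gamma)\in\{4,6\}$, where again $\textsf{pr}_\gamma$ lands in a negative exterior degree and vanishes; since a degree-zero cocycle with vanishing class is itself zero, (\ref{BG3}) holds as soon as $\llbracket b,\pi\rrbracket_\gamma$ is closed.

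The hard part will be the explicit local analysis feeding these cohomological arguments, namely checking that the relevant chain-level brackets $\llbracket\pi,\pi\rrbracket_\gamma$ and $\llbracket b,\pi\rrbracket_\gamma$ are closed and pinning down $b_\gamma$. At an element with $l(\gamma)=4$ the contributing reflections satisfy $N^\alpha\cap N^\beta=0$ and act on the complex $2$-plane $W=N^\alpha\oplus N^\beta$; the subgroup of $\Gamma$ generated by the reflections acting nontrivially on $W$ is a finite complex-reflection subgroup of $GL(2,\mathbb{C})$. I would classify these subgroups (this is where the McKay correspondence enters), determine $\pi_\alpha,\pi_\beta$ from $\Gamma$-invariance in each case as in (\ref{eq:pi-2k}), and compute $\llbracket\pi_\alpha,\pi_\beta\rrbracket$ directly, exactly the calculation performed for $\Gamma_n$ in Section \ref{sec:ex}. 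The expectation, signalled by the Remark after Proposition \ref{prop:quant-ex}, is that for every configuration other than one equivalent to $\Gamma_n$ the bracket already vanishes, so $b_\gamma=0$; while for $\Gamma_n$-type configurations the explicit correction $B_{2k}$ of (\ref{eq:B}) furnishes $b_\gamma$ and is seen directly to satisfy $\llbracket b,\pi\rrbracket_\gamma=0$ as in (\ref{eq:ex-b}). Assembling these local models into a single $\Gamma$-invariant $b$ over all $l(\gamma)=4$ orbits, and clearing the few remaining cross-terms, completes the verification of (\ref{BG1})--(\ref{BG3}) and hence the PBW property.
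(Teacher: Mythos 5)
Your overall route is the paper's: verify the Braverman--Gaitsgory conditions, use the complex structure to force all codimensions even, kill the components of $\llbracket\pi,\pi\rrbracket$ at $l(\gamma)\in\{0,2\}$ by the arguments of Lemma \ref{lem:identity} and Theorem \ref{thm:linear-abel}, and at $l(\gamma)=4$ reduce to the subgroup of $\Gamma$ acting trivially on $V^\gamma$, classify its image in $GL(2,\mathbb C)$ as in Lemma \ref{lem:gl2}, and import the explicit correction $B_{2k}$ of Subsection \ref{sec:ex}. Your one genuinely different ingredient --- deducing the existence of a \emph{constant} primitive $b_\gamma$ at $l(\gamma)=4$ from the vanishing of the Koszul cohomology in exterior degree $3-l(\gamma)<0$ together with the fact that $\partial_\gamma$ raises the symmetric degree by one --- is correct and a little cleaner than the paper's construction for condition (\ref{BG2}); but it cannot replace the classification, because condition (\ref{BG3}) still requires knowing $b_\gamma$ explicitly.

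That is where your argument has a genuine error. You claim $\llbracket b,\pi\rrbracket_\gamma$ is supported at $l(\gamma)\in\{4,6\}$, where $\textsf{pr}_\gamma$ lands in negative exterior degree, so that (\ref{BG3}) is ``essentially formal''. This is false: with $b_\alpha$ supported at $l(\alpha)=4$ and $\pi_\beta$ at $l(\beta)=2$ one can have $l(\alpha\beta)=2$ (this happens exactly when $V^\alpha\subset V^\beta$; e.g.\ $\alpha_{2k}\beta_{2l}=\beta_{2l-2k}$ in $\Gamma_n$), and at such a component $\textsf{pr}_\gamma$ lands in $S({V^\gamma}^*)\otimes V^\gamma\otimes\wedge^2N^\gamma\neq 0$, so neither the vanishing of the cohomology class nor the symmetric-degree count applies. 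Your cohomological argument for (\ref{BG3}) therefore covers only the components with $l(\gamma\alpha)\geq 4$ (where it coincides with the paper's Step III, case 1); the reflection components $l(\gamma\alpha)=2$ are settled only by the explicit computation $\llbracket B_{2k},\Pi_{2l}\rrbracket=0$ of Equation (\ref{eq:ex-b}), after observing that $l(\gamma\alpha)=2$ forces $\gamma\alpha$ back into the subgroup $\Gamma_\gamma$. You do cite (\ref{eq:ex-b}) at the end, so the missing fact is within reach, but as written the formal part of your justification of (\ref{BG3}) silently skips exactly the case where it can fail.
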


The proof of this theorem consists of several steps. We start with
recalling some results about finite subgroups of $GL(2, \mathbb C)$.
\begin{lemma}\label{lem:sl}A nonabelian finite subgroup $G$ of $SL(2,
\mathbb{C})$ must contain the element $\left(\begin{array}{cc}-1&0\\
0&-1\end{array}\right)\in SL(2, \mathbb{C})$.
\end{lemma}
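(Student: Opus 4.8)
The plan is to reduce the statement to a counting fact and then invoke the classification the paper is already using. The crucial elementary observation is that $-I$ is the \emph{unique} element of order $2$ in $SL(2,\mathbb{C})$: if $g^2=I$ and $g\neq I$, then the minimal polynomial of $g$ divides $(t-1)(t+1)$, so $g$ is diagonalizable with eigenvalues in $\{1,-1\}$; since $\det g=1$, the eigenvalue pair cannot be $(1,-1)$ and cannot be $(1,1)$ (which gives $g=I$), forcing $g=-I$. Consequently, for a finite subgroup $G\subseteq SL(2,\mathbb{C})$ one has $-I\in G$ if and only if $G$ contains an element of order $2$, which by Cauchy's theorem is equivalent to $|G|$ being even. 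Thus the lemma is equivalent to the assertion that every nonabelian finite subgroup of $SL(2,\mathbb{C})$ has even order, and I would prove the contrapositive: a finite subgroup of odd order is abelian (indeed cyclic).

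So I would suppose $-I\notin G$, equivalently (by the above) that $|G|$ is odd. Then the kernel of the projection $p\colon SL(2,\mathbb{C})\to PSL(2,\mathbb{C})$ meets $G$ trivially, since $G\cap\{\pm I\}=\{I\}$, so $p$ restricts to an isomorphism $G\xrightarrow{\ \sim\ }\bar G:=p(G)$ onto a finite subgroup of $PSL(2,\mathbb{C})$ of odd order. Averaging an arbitrary Hermitian inner product over $G$ produces a $G$-invariant positive definite form, so after conjugation $G\subseteq SU(2)$ and $\bar G$ lies in $SU(2)/\{\pm I\}\cong SO(3)$. It therefore suffices to show that a finite subgroup of $SO(3)$ of odd order is cyclic.

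The main obstacle is precisely this last step, namely ruling out the nonabelian possibilities. Here I would invoke the classification of finite subgroups of $SO(3)$ (equivalently, the ADE/McKay classification of finite subgroups of $SL(2,\mathbb{C})$ already used elsewhere in the paper): every such group is cyclic $C_m$, dihedral $D_m$, or one of the polyhedral groups $A_4,S_4,A_5$, and all of the nonabelian ones have even order. Hence $\bar G$, being of odd order, is cyclic, so $G\cong\bar G$ is abelian, contradicting the hypothesis; therefore $|G|$ is even and $-I\in G$. I would also note that in the nonabelian case one can often exhibit $-I$ by hand: choosing a noncentral $A\in G$, diagonalized as $A=\mathrm{diag}(\lambda,\lambda^{-1})$ with $\lambda\neq\pm1$, any $B\in G$ with $BAB^{-1}=A^{-1}$ must be anti-diagonal, $B=\begin{pmatrix}0&b\\ c&0\end{pmatrix}$ with $bc=-1$, whence $B^2=-I\in G$; this settles the binary dihedral case directly, while the polyhedral cases are exactly where the classification is genuinely needed.
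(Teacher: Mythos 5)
Your proof is correct, but it reaches the key intermediate fact --- that $|G|$ must be even, so that Cauchy's theorem plus the uniqueness of the order-two element $-I$ in $SL(2,\mathbb{C})$ finishes the argument --- by a genuinely different route than the paper. The paper first observes that a nonabelian $G$ must act irreducibly on $\mathbb{C}^2$ (a reducible representation of a finite group is completely reducible, hence simultaneously diagonalizable, forcing $G$ abelian), and then invokes Serre's theorem that the degree of an irreducible representation divides the index of the center; since that degree is $2$, the order of $G$ is even and the conclusion follows. You instead pass to $PSL(2,\mathbb{C})$, conjugate $G$ into $SU(2)$ by averaging a Hermitian form, and appeal to the classification of finite subgroups of $SO(3)$ (cyclic, dihedral, $A_4$, $S_4$, $A_5$) to see that every nonabelian one has even order. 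Both arguments share the same elementary backbone (uniqueness of the involution in $SL(2,\mathbb{C})$, reduction to evenness of $|G|$); the paper's divisibility argument is a one-line appeal to character theory and needs nothing about the structure of $G$, while your route uses the heavier ADE-type classification but yields the stronger statement that a finite subgroup of $SL(2,\mathbb{C})$ not containing $-I$ is in fact cyclic. Your supplementary hands-on argument (an anti-diagonal $B$ inverting a noncentral diagonal $A$ satisfies $B^2=-I$) is a nice touch and, as you correctly note, covers the binary dihedral case but not the polyhedral ones, so it does not remove the dependence on the classification.
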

\begin{proof}
We notice that the canonical action of $G$ on $\mathbb {C}^2$ is
irreducible. Otherwise, $G$ will be a subgroup of $GL(1,
\mathbb{C})\times GL(1, \mathbb{C})$, which is abelian. Therefore,
according to \cite{S}[Chapter 6, Proposition 17], the order of the
center of $G$ is divisible by 2. Therefore, there is an element in
$G$ of order 2. As $\left(\begin{array}{cc}-1&0\\
0&-1\end{array}\right)$ is the unique element in $SL(2, \mathbb{C})$
of order 2, we conclude that if $G$ is not abelian, $G$ contains $\left(\begin{array}{cc}-1&0\\
0&-1\end{array}\right)$.
\end{proof}

The following lemma is a corollary of \cite{D-a}[\S 26, Theorem 26].
\begin{lemma}
\label{lem:gl2}Let $\Gamma$ be a nonabelian finite subgroup of
$GL(2,\mathbb{C})$. If $\Gamma$ does not contain any matrix of the
form $\left(\begin{array}{cc}a&0\\
0&a\end{array}\right)$ for $a\ne 1$, then there is a natural number
$n$ such that $\Gamma$ is conjugate to the group $\Gamma_n$ as is
introduced in subsection \ref{sec:ex}.
\end{lemma}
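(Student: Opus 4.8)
The plan is to project $\Gamma$ into $PGL(2,\mathbb{C})$, read off the possibilities from the cited classification, and then use the hypothesis on scalars to single out the odd dihedral groups. First I would record, exactly as in the proof of Lemma \ref{lem:sl}, that a nonabelian finite subgroup of $GL(2,\mathbb{C})$ acts irreducibly on $\mathbb{C}^2$: a reducible finite subgroup is conjugate into the diagonal torus and is therefore abelian. Next, let $Z\subset GL(2,\mathbb{C})$ denote the group of scalar matrices and $p\colon GL(2,\mathbb{C})\to PGL(2,\mathbb{C})$ the quotient map. The hypothesis that $\Gamma$ contains no scalar $aI$ with $a\ne 1$ says precisely that $\Gamma\cap Z=\{I\}$, so $p$ restricts to an isomorphism of $\Gamma$ onto its image $\bar\Gamma:=p(\Gamma)$, a nonabelian finite subgroup of $PGL(2,\mathbb{C})$.

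Now I would invoke the cited classification \cite{D-a}[\S 26, Theorem 26]: every nonabelian finite subgroup of $PGL(2,\mathbb{C})$ is isomorphic to a dihedral group $D_m$ with $m\ge 3$, or to one of $A_4$, $S_4$, $A_5$. The inclusion $\Gamma\hookrightarrow GL(2,\mathbb{C})$ exhibits $\bar\Gamma\cong\Gamma$ as a group admitting a faithful two-dimensional complex representation which, by the first step, is irreducible. Inspecting character degrees eliminates the three exceptional cases: $A_4$ has irreducible degrees $1,1,1,3$ and $A_5$ has degrees $1,3,3,4,5$, so neither has a two-dimensional irreducible representation; $S_4$ has degrees $1,1,2,3,3$, but its unique two-dimensional irreducible representation factors through $S_4/V_4\cong S_3$ and hence is not faithful. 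Therefore $\bar\Gamma\cong D_m$ for some $m\ge 3$.

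It remains to determine $m$ and identify $\Gamma$. The faithful two-dimensional irreducible representation of $D_m=\langle r,s\mid r^m=s^2=1,\ srs^{-1}=r^{-1}\rangle$ sends $r$ to a diagonal matrix $\operatorname{diag}(\zeta,\zeta^{-1})$ with $\zeta$ a primitive $m$-th root of unity and $s$ to an anti-diagonal involution. The cyclic part $\langle r\rangle$ maps to the diagonal matrices $\operatorname{diag}(\zeta^{j},\zeta^{-j})$, and such a matrix is scalar exactly when $\zeta^{2j}=1$. If $m$ were even, the choice $j=m/2$ would produce the scalar $\operatorname{diag}(\zeta^{m/2},\zeta^{-m/2})=-I$, contradicting the hypothesis; hence $m$ is odd, say $m=2n+1$. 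Writing $\rho=\exp\big(\tfrac{2\pi i}{2n+1}\big)$ and conjugating so that $r\mapsto\alpha_1=\operatorname{diag}(\rho,\rho^{-1})$ and $s\mapsto\beta_0=\left(\begin{smallmatrix}0&1\\1&0\end{smallmatrix}\right)$, the identities $\alpha_k=\alpha_1^{k}$ and $\beta_k=\beta_0\alpha_1^{k}$ show that $\Gamma$ is conjugate to the group $\Gamma_n$ of subsection \ref{sec:ex}; the particular choice of primitive root is immaterial, since any primitive root generates the same cyclic subgroup $\{\alpha_k\}$.

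The main obstacle I expect is the elimination of the exceptional groups $A_4$, $S_4$, $A_5$, and the key is to phrase the hypothesis correctly. Because $\Gamma\cap Z=\{I\}$, the subgroup $\Gamma$ is \emph{literally} a faithful irreducible two-dimensional representation of $\bar\Gamma$, which is what licenses the representation-degree argument; merely knowing $\bar\Gamma$ as an abstract group is not enough. Equivalently, the binary polyhedral lifts of $A_4,S_4,A_5$ inside $GL(2,\mathbb{C})$ all contain the scalar $-I$, precisely the kind of element the hypothesis forbids.
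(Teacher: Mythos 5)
Your proof is correct, and it takes a genuinely different route from the paper's. The paper argues entirely inside $GL(2,\mathbb{C})$: it sets $G=\Gamma\cap SL(2,\mathbb{C})$, uses Lemma \ref{lem:sl} together with the no-scalar hypothesis to force $G$ to be abelian, diagonalizes it into a cyclic group $\{\mathrm{diag}(\rho^i,\rho^{-i})\}$ of odd order, and then shows by explicit normalizer and conjugation computations that every remaining element of $\Gamma$ is anti-diagonal of the prescribed form. You instead observe that the hypothesis says exactly $\Gamma\cap Z=\{I\}$, pass to $PGL(2,\mathbb{C})$, invoke the classification of its finite subgroups, kill $A_4$, $S_4$, $A_5$ by character-degree and faithfulness arguments (using the irreducibility of $\Gamma$ on $\mathbb{C}^2$, which is the same observation that opens the proof of Lemma \ref{lem:sl}), and then classify the faithful irreducible two-dimensional representations of $D_m$, with the scalar hypothesis forcing $m$ odd. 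Your approach is conceptually cleaner and makes transparent why the binary polyhedral groups are excluded (they all contain $-I$), at the cost of taking the Klein-type classification of finite subgroups of $PGL(2,\mathbb{C})$ as a black box; the paper's argument is longer but self-contained at the level of matrix manipulations. One cosmetic caveat: you attribute the $PGL(2,\mathbb{C})$ classification to \cite{D-a}[\S 26, Theorem 26], and the paper only says its lemma is a corollary of that theorem, so you should state the classification you actually use rather than lean on the citation; the result itself is classical and your use of it is sound. Your final identification step is also complete --- you correctly note that a twist of the standard dihedral representation still sends $s$ to an anti-diagonal involution $\left(\begin{smallmatrix}0&\beta\\ \beta^{-1}&0\end{smallmatrix}\right)$, which a diagonal conjugation (the same $U$ the paper uses) normalizes to $\beta_0$ while preserving the diagonal subgroup.
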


\begin{proof}
We start by considering the intersection  $G=\Gamma\cap
SL(2,\mathbb{C})$. If $G$ is trivial, then there is an injective
group homomorphism
\[
\Gamma\to GL(2, \mathbb{C})/SL(2, \mathbb{C})\cong \mathbb{C}-\{0\}.
\]
This shows that $\Gamma$ is abelian, which contradicts the
assumption that $\Gamma$ is not abelian. Therefore, $G$ is a
nontrivial subgroup of $SL(2, \mathbb{C})$. The following discussion
is divided into two parts according to whether $G$ is abelian.
\begin{itemize}
\item $G$ is not abelian. Then by Lemma \ref{lem:sl}, $G$ contains
the element $\left(\begin{array}{cc}-1&0\\
0&-1\end{array}\right)$, which is in the center of $G$. This
contradicts to the assumption of this lemma.
\item $G$ is abelian. By conjugation with an invertible matrix, we can assume that $G$
contains a diagonal element like $A=\left(\begin{array}{cc}a&0\\
0&a^{-1}\end{array}\right)$ with $a\ne -1$. (We remark that any
element in $\Gamma$ is diagonalizable as $\Gamma$ is of finite
order.) Furthermore, we recall the fact that if
$B=\left(\begin{array}{ll}\alpha&\beta\\
\gamma&\delta\end{array}\right)$ commutes with $A$, then
$\beta=\gamma=0$. Hence, any element in $G$ is of the form $B=\left(\begin{array}{cc}\beta &0\\
0&\beta^{-1}\end{array}\right)$ for $\beta\in \mathbb{C}-\{0\}$.
Therefore, we conclude that $G$ is a cyclic subgroup of
$SL(2,\mathbb{R})$ isomorphic to $\{\left(\begin{array}{cc}\rho&0\\
0&\rho^{-1}\end{array}\right): \rho^{2n+1}=1\}$ for some $n\in
\mathbb{N}$. (If $\rho^{2n}=1$, then $\rho^n=-1$ and $G$ contains
the
element $\left(\begin{array}{cc}-1&0\\
0&-1\end{array}\right)$.)

We observe that if $B=\left(\begin{array}{cc}\alpha&\beta\\
\gamma&\delta\end{array}\right)\in GL(2, \mathbb{C})$ is a
normalizer of $G$, then $\alpha\beta=\gamma\delta=0$. Therefore, $B=\left(\begin{array}{ll}\alpha&0\\
0&\gamma\end{array}\right)$ or $\left(\begin{array}{ll}0&\beta\\
\delta&0\end{array}\right)$. This shows that any element in $\Gamma$
is either diagonal or of the form $\left(\begin{array}{ll}0&\beta\\
\delta&0\end{array}\right)$. As we have assumed that $\Gamma$ is not
an abelian group, there has to be a nonzero $B$ in $\Gamma$ of the form $\left(\begin{array}{ll}0&\beta\\
\delta&0\end{array}\right)$.

Compute $B^2=\left(\begin{array}{cc}\beta\delta&0\\
0&\delta\beta\end{array}\right)$. By the assumption of $\Gamma$,
$\beta\delta=1$. Therefore, $B=\left(\begin{array}{cc}0&a\\
a^{-1}&0\end{array}\right)\in \Gamma$. Now choose $U=\left(\begin{array}{cc}0&a^{\frac{1}{2}}\\
a^{-\frac{1}{2}}&0\end{array}\right)$, and consider the group
$\tilde{\Gamma}=U^{-1}\Gamma U$ which is again not abelian and does
not contain any matrix of the form $\left(\begin{array}{cc}a&0\\
0&a\end{array}\right)$. Under this isomorphism, we see that
$\tilde{G}=G=\{\left(\begin{array}{cc}\rho &0\\
0&\rho^{-1}\end{array}\right): \rho^{2n+1}=0\}$ and $\tilde{\Gamma}$
contains a matrix
$\beta_0=U^{-1}BU=\left(\begin{array}{ll}0&1\\
1&0\end{array}\right)\in \tilde{\Gamma}$.

Now if there is any other element $C$ in $\tilde{\Gamma}$ of the form $\left(\begin{array}{ll}0&\beta'\\
\delta'&0\end{array}\right)$ then by the same arguments as $B$, we
know that $\beta'=1/\delta'$. Furthermore, as $\beta_0B=\left(\begin{array}{ll}\delta'&0\\
0&\delta'^{-1}\end{array}\right)\in \tilde{G}=G$. This implies that
$B=\left(\begin{array}{cc}0&\rho\\
\rho^{-1}&0\end{array}\right)$ with $\rho^{2n+1}=1$.

Next if there is any element $D=\left(\begin{array}{cc}\alpha&0\\
0&\gamma\end{array}\right)$ in $\tilde{\Gamma}$,
compute $\beta_0D\beta_0D=\left(\begin{array}{cc}\alpha\gamma &0\\
0&\alpha\gamma\end{array}\right)$. As $\tilde{\Gamma}$ has no
element
like $\left(\begin{array}{cc}a&0\\
0&a\end{array}\right)$ with $a\ne 1$, we conclude that
$\alpha=1/\gamma$ and $D$ belongs to $G$.

Summarizing the above analysis, we have seen that
there exists $n\in \mathbb{N}$, such that $\tilde{\Gamma}=\{\left(\begin{array}{cc}0&\rho^i\\
\rho^{-i}&0\end{array}\right), \left(\begin{array}{cc}\rho^i&0\\
0&\rho^{-i}\end{array}\right): 0\leq i\leq 2n, \rho^{2n+1}=1\}$.
\end{itemize}
\end{proof}

\noindent{\it Proof of theorem \ref{thm:quant-liniear}}:

With Theorem \ref{thm:linear-abel}, it is sufficient to work with
$\Gamma$ which is nonabelian. We choose a $\Gamma$-invariant
hermitian metric on $V$ which always exists as $\Gamma$ is finite.
We prove that there exists a choice for $B_\gamma$ such that the
Braverman-Gaitsory conditions (\ref{BG1})-(\ref{BG3}) are
satisfied.\\

\noindent{\bf Step I:} The Braverman-Gaitsory condition (\ref{BG1})
is satisfied automatically by the assumption on $\pi_\alpha$.\\

\noindent{\bf Step II:} In the following, we make a proper choice
for $B_\gamma$ with $l(\gamma)=4$ such that the Braverman-Gaitsory
condition (\ref{BG2}) is satisfied.

As $\Gamma$ is acting on a complex vector space, the fixed subspace
of any group element $\gamma$ is of even real codimension. Let
$\pi_0$ be the linear Poisson structure at the identity component,
$\pi_\alpha$ be the linear Poisson structure at the $\alpha$
component with $l(\alpha)=2$.

We look at $\llbracket\pi_0, \pi_\alpha\rrbracket$, $\llbracket
\pi_\alpha, \pi_0 \rrbracket$, and $\llbracket \pi_\alpha,
\pi_\beta\rrbracket$ with $l(\alpha)=l(\beta)=2$. By the same
arguments as in the proof of Theorem \ref{thm:linear-abel}, we have
that for any $\alpha$ with $l(\alpha)=2$, $\llbracket\pi_0,
\pi_\alpha\rrbracket+\llbracket \pi_\alpha, \pi_0 \rrbracket=0$ as
$\pi_0+\sum_{\alpha}\pi_\alpha$ is a Poisson structure. Therefore,
we are reduced to look at $\llbracket \pi_\alpha,
\pi_\beta\rrbracket$.

We observe that if $V^\alpha=V^\beta$, then $\wedge^2
N^\alpha=\wedge^2 N^\beta$ vanishes on functions depending only on
variables in $V^\alpha=V^\beta$. It is easy to compute that
$\llbracket \pi_\alpha, \pi_\beta \rrbracket=0$. This reduces us to
the situation that $V^\alpha\ne V^\beta$.

When $V^\alpha\ne V^\beta$, we have the equation
$V^\alpha+V^\beta=V$ as both $V^\alpha$ and $V^\beta$ are complex
subspaces of $V$ of complex codimension 1. Furthermore the equality
$V^\alpha+V^\beta=V$ implies that $V^{\alpha\beta}=V^\alpha\cap
V^\beta$ which is of complex codimension 2 and real codimension 4.
We notice that in this case both $\alpha$ and $\beta$ fix every
point in $V^{\alpha\beta}$, and also preserve the normal direction
$N^{\alpha\beta}$. We are interested in $\llbracket \pi_\alpha,
\pi_\beta\rrbracket$, which is now at $\alpha\beta$ component. As
$\llbracket \pi_\alpha, \pi_\beta \rrbracket$ is only a tri-vector
field  supported at $N^{\alpha\beta}$ with $l(\alpha\beta)=4$, we
know that $\llbracket \pi_\alpha, \pi_\beta \rrbracket$ is 0 in the
Hochschild cohomology of $S(V)\rtimes \Gamma$.

Now we fix an element $\gamma$ with $l(\gamma)=4$, then we know from
the previous paragraph that if $\llbracket \pi_\alpha,
\pi_\beta\rrbracket $ is nonzero at $\gamma=\alpha\beta$, then
$\alpha$ and $\beta$ acts on $V$ preserving $N^{\gamma}$ and fixing
every element in $V^\gamma$. This leads us to look at the subgroup
$\Gamma_\gamma$ of $\Gamma$ whose elements act trivially on
$V^\gamma$. $\Gamma_\gamma$ contains all $\alpha$ such that
$V^\gamma\subset V^\alpha$, which acts on $N^\gamma$ faithfully.

Let $\alpha\in \Gamma_\gamma$ with $l(\alpha)=2$. By the assumption
on $\pi_\alpha$, it is an element in ${V^\alpha}^* \otimes \wedge^2
N^\alpha$, which can be written as a sum of two terms
$\pi_\alpha^1+\pi_\alpha^2$ as ${V^\alpha}^*={V^{\gamma}}^*\oplus
({N^\gamma}^*\cap {V^\alpha}^*)$. We easily see that $\pi_\alpha^1$
will not contribute to $\llbracket \pi_\alpha, \pi_\beta\rrbracket$
as $N^\alpha$ and $N^\beta$ are orthogonal to $V^\gamma$. This shows
that to study $\llbracket \pi_\alpha, \pi_\beta\rrbracket$, it is
enough to assume that $\pi_\alpha$ belongs to ${N^\gamma}^*\otimes
\wedge^2 N^\alpha$. Furthermore, if $n_\alpha$ is the holomorphic
vector along $N^\alpha$ and $v_\alpha$ is the holomorphic vector
along ${V^\alpha}^*$ in $N^\gamma$, then we can write
$\pi_\alpha=(c_\alpha v_\alpha-\bar{c}_\alpha \bar{v}_\alpha)
n_\alpha\wedge \bar{n}_\alpha$ for some complex number $c_\alpha$ by
the fact that $\pi_\alpha$ is real.

If $\Gamma_\gamma$ is abelian, then by Lemma \ref{lem:bracket}, we
know that $\llbracket \pi_\alpha, \pi_\beta\rrbracket=0$ for any
$\alpha,\beta$, and therefore we set $B_{\delta}=0$ for $\delta\in
\Gamma_\gamma$. In the following, we assume that $\Gamma_\gamma$ is
not abelian. If $\Gamma_\gamma$ contains an element $\nu$ which acts
on
$N^\gamma$ of the form $\left(\begin{array}{cc}a&0\\
0&a\end{array}\right)$ with a unitary number $a\ne 1$, then it is
easy to check $\nu_*(\pi_\alpha)=(a^{-1}c_\alpha
v_\alpha-\bar{a}^{-1}\bar{c}_\alpha \bar{v}_\alpha) n_\alpha\wedge
\bar{n}_\alpha$ is not invariant under $\nu$ unless $c_\alpha=0$.
This shows that $\pi_\alpha$ has to be zero if it is invariant under
$\nu$ and therefore $\llbracket \pi_\alpha, \pi_\beta\rrbracket=0$
in this case. We choose $B_\gamma=0$ for this type of $\gamma$.
Therefore, for nonzero $B_\gamma$, we only need to consider the
situation that
$\Gamma_\gamma$ is not abelian and contains no element of the form $\left(\begin{array}{cc}a&0\\
0&a\end{array}\right)$ with $a\ne 1$. By Lemma \ref{lem:gl2},
$\Gamma_\gamma$ action on $N^\gamma$ is isomorphic to the situation
studied in subsection \ref{sec:ex}. And we can choose $B_\gamma$ as
Equation (\ref{eq:B}).\\

\noindent{\bf Step III:} we prove that $\sum_{\alpha, \gamma,
l(\alpha)\leq2, l(\gamma)=4}\llbracket B_\gamma, \pi_\alpha
\rrbracket=0$ with the choices of $B_\gamma$ introduced in Step II.

We decompose the above sum into 2 parts
\begin{enumerate}
\item $\sum_{\alpha, \gamma,
l(\alpha)\leq 2, l(\gamma)=4, l(\gamma\alpha)\geq 4}\llbracket
B_\gamma, \pi_\alpha \rrbracket$;
\item $\sum_{\alpha, \gamma, l(\alpha)\leq 2, l(\gamma)=4,
l(\gamma\alpha)= 2}\llbracket B_\gamma, \pi_\alpha\rrbracket$.
\end{enumerate}

We consider the part of sum with $l(\gamma\alpha)\geq 4$. Let
$\delta=\gamma\alpha$. Define $D_\delta=\sum_{\gamma\alpha=\delta,
l(\alpha)\leq 2, l(\gamma)=4}\llbracket B_\gamma,
\pi_\alpha\rrbracket$, which is an element in $\wedge^3V$. We notice
that $D_\delta$ is $\partial^\delta$ closed as
\[
\begin{split}
\partial(\sum_{\alpha,
\gamma}\llbracket B_\gamma, \pi_\alpha\rrbracket)=&\sum_{\alpha,
\gamma}\llbracket \partial B_\gamma, \pi_\alpha\rrbracket\\
=&\sum_{\alpha, \gamma=\beta\lambda}\llbracket \llbracket \pi_\beta,
\pi_\lambda\rrbracket, \pi_\alpha\rrbracket\\
=&0.
\end{split}
\]
According to Equation (\ref{N-P-P-T}), we see that $D_\gamma$ in
$\wedge ^3 V$ is a zero cocycle as $l(\gamma)\geq 4$. On the other
hand, we see that $D_\gamma$ is in $\wedge ^3 V$.  If we define
elements in $V$ of degree -1 and elements in $V^*$ of degree $1$,
then $D_\gamma$ is of degree -3, and $\partial^\gamma$ is of degree
0. We see that $S(V^*)\otimes \wedge^2 V$ has degree greater or
equal to $-2$. Therefore, $\partial^\gamma (S(V^*)\otimes \wedge
^2V)$ has no term with degree less than  -2 since $\deg(\partial
^\gamma)=0$. This shows that if $D_\gamma\ne0$, it cannot be a
coboundary of $\partial^\gamma$. This shows that $D_\gamma$ has to
be zero as $D_\gamma$ is a trivial cocycle by Equation
(\ref{N-P-P-T}).

We consider the part of sum with $l(\gamma\alpha)\leq 2$. This
implies that $V^\gamma\subset V^\alpha$ because otherwise
$V^\gamma+V^\alpha=V$ and $V^{\gamma\alpha}=V^\gamma\cap V^\alpha$
which is of real codimension 6. In this case, we know that
$\gamma\alpha$ is also in $\Gamma_\gamma$. Therefore, we can use
Equation (\ref{eq:ex-b}) to conclude that $\llbracket B_\gamma,
\pi_\alpha\rrbracket=0$.\\

In conclusion, Steps I-III show that Bravermam-Gaitsgory conditions
(\ref{BG1})-(\ref{BG3}) are satisfied for any linear Poisson
structures on $S(V^*)\rtimes \Gamma$ and a proper choice of
$B_\gamma$. Therefore, by PBW property, we see that the algebra
\[
\begin{split}
H_\Pi:=& T(V)\rtimes \Gamma[[\hbar]]\\
       &\Big/\Big\langle
x\otimes y-y\otimes x-\hbar(\pi_0(x,y)+\sum_{\alpha}
\pi_{\alpha}(x,y) \alpha)-\hbar^2(\sum_{\gamma}B_{\gamma}(x,y)
\gamma)\Big\rangle
\end{split}
\]
defines a deformation quantization of $S(V^*)\rtimes\Gamma$ along
the direction of $\pi_0+\sum_\alpha \pi_\alpha$. \qquad $\Box$
\section{Hochschild cohomology and non commutative Poisson cohomology}
In this section, we would like to study various properties and
examples of the Poisson structures and algebras we constructed in
the previous section. To state our results, we fix some convention.
Note that if $\alpha$ and $\beta$ are conjugate to each other inside
$\Gamma$, then $l(\alpha)=l(\beta)$. Therefore, it is legitimate to
define {\bf codimension} of a conjugacy class of $\Gamma$ by the
codimension of an element $\alpha\in \Gamma$. Define $c_k$ to be the
number of conjugacy class of $\Gamma$ with codimension $k$. In the
following, we use $\mathbb{R}((\hbar))$ to stand for the algebra of
Laurent Polynomials of $\hbar$. For any algebra $A$ over the ring
$\mathbb R[[\hbar]]$, we use $A((\hbar))$ to stand for the extension
$A\otimes_{\mathbb R[[\hbar]]}\mathbb R((\hbar))$.

\begin{proposition}\label{prop:per-cyclic}
The periodic cyclic homology of the algebra $H_\pi$ for any constant
or linear Poisson structures on $S(V^*)\rtimes \Gamma$ is equal to
\[
\begin{split}
HP_0(H_\pi((\hbar)))&=\sum_{k}\mathbb R((\hbar))^{\times c_k}
=\mathbb{R}((\hbar))^{\times |C(\Gamma)|}\\
HP_1(H_\pi((\hbar)))&=0.
\end{split}
\]
\end{proposition}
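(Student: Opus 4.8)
The plan is to compute the periodic cyclic homology of $H_\pi((\hbar))$ by exploiting the general principle that periodic cyclic homology is invariant under deformation, reducing the problem to the periodic cyclic homology of the undeformed algebra $S(V^*)\rtimes\Gamma$ (over the Laurent coefficient ring). Concretely, since $H_\pi$ is a flat deformation of $S(V^*)\rtimes\Gamma$ (this is exactly what the PBW property established in Theorems \ref{thm:quant-constant}, \ref{thm:linear-abel}, \ref{thm:quant-liniear} and Proposition \ref{prop:quant-ex} guarantees), I would invoke rigidity of periodic cyclic homology under formal deformation. Thus the first step is to reduce the computation to
\[
HP_\bullet\big(H_\pi((\hbar))\big)\cong HP_\bullet\big(S(V^*)\rtimes\Gamma\big)\otimes_{\mathbb R}\mathbb R((\hbar)).
\]
This is the conceptual heart of the argument and will also be the main obstacle to make rigorous: one must justify that the Goodwillie-type rigidity theorem applies in the $\mathbb R[[\hbar]]$-linear, $\hbar$-adically complete setting, and that extending coefficients to the Laurent ring $\mathbb R((\hbar))$ is harmless. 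The PBW property is precisely what makes $H_\pi$ free as an $\mathbb R[[\hbar]]$-module with the same underlying graded pieces as $S(V^*)\rtimes\Gamma[[\hbar]]$, so the deformation is filtered and the associated graded recovers the classical algebra; this flatness is what feeds the rigidity statement.

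Next I would compute $HP_\bullet(S(V^*)\rtimes\Gamma)$ directly. Here the key input is the orbifold/crossed-product computation: the cyclic homology of $S(V^*)\rtimes\Gamma$ decomposes as a sum over conjugacy classes of $\Gamma$, and for each conjugacy class the relevant contribution is that of the symmetric algebra $S({V^\gamma}^*)$ on the fixed subspace, twisted appropriately. Since $S({V^\gamma}^*)$ is a polynomial algebra, its periodic cyclic homology is that of the smooth affine space $V^\gamma$, which by the Hochschild--Kostant--Rosenberg theorem and the periodicity of $HP$ collapses to the de Rham cohomology of $V^\gamma$. As $V^\gamma$ is a real (or complex) vector space it is contractible, so each summand contributes a single copy of $\mathbb R$ concentrated in even degree. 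Summing over the $|C(\Gamma)|$ conjugacy classes, and recording that $c_k$ counts the conjugacy classes of codimension $k$ so that $\sum_k c_k=|C(\Gamma)|$, yields
\[
HP_0\big(S(V^*)\rtimes\Gamma\big)=\mathbb R^{\times |C(\Gamma)|},\qquad
HP_1\big(S(V^*)\rtimes\Gamma\big)=0,
\]
with everything landing in even degree because each contractible fixed locus contributes only in degree $0$ mod $2$.

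Finally I would assemble the two steps: tensoring the undeformed answer with $\mathbb R((\hbar))$ over $\mathbb R$ and regrouping the sum over conjugacy classes by codimension $k$ gives exactly
\[
HP_0\big(H_\pi((\hbar))\big)=\sum_k \mathbb R((\hbar))^{\times c_k}=\mathbb R((\hbar))^{\times|C(\Gamma)|},\qquad
HP_1\big(H_\pi((\hbar))\big)=0,
\]
as claimed. I expect the degree count and the conjugacy-class bookkeeping to be routine once the HKR-type computation is in hand; the genuinely delicate point, and the one I would spend the most care on, is the invariance step, namely verifying that the PBW flatness of $H_\pi$ licenses the rigidity isomorphism and that the passage to Laurent coefficients $\mathbb R((\hbar))$ does not introduce correction terms. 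This is where I would cite the standard deformation-invariance of periodic cyclic homology for flat formal deformations and check that our $H_\pi$ satisfies its hypotheses.
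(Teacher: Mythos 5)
Your proposal is correct and follows essentially the same route as the paper: invoke deformation-invariance of periodic cyclic homology (Getzler--Goodwillie) to reduce to the undeformed algebra, then use the conjugacy-class decomposition of $HP_\bullet(S(V^*)\rtimes\Gamma)$ (Brylinski--Nistor), where each class contributes the de Rham cohomology of the contractible fixed locus $V^\gamma$. The only difference is that you sketch the crossed-product/HKR computation explicitly where the paper simply cites it.
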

\begin{proof}
By Getzler-Goodwillie \cite{G-W}, we know that as the periodic
cyclic homology is invariant under deformation,
$HP_\bullet(H_\pi((\hbar)))$ is equal to $HP_\bullet(S(V^*)\rtimes
C(\Gamma))((\hbar))$. By the computation in \cite{B-N},
$HP_\bullet(S(V^*)\rtimes \Gamma)$ is equal as is stated above.
\end{proof}

\subsection{Hochschild cohomology of symplectic reflection algebra}
In this subsection, we restrict ourselves to quantization of a
special type of constant Poisson structures. These are called
symplectic reflection algebras. Let $V$ be a symplectic vector space
with standard symplectic 2-form $\omega$. Let $\pi$ be the
associated Poisson structure of $\omega$. For $\gamma$ with
$l(\gamma)=2$, consider the restriction $\omega_\gamma$ of $\omega$
to $N^\gamma$. $\omega_\gamma$ is an invertible bilinear operation
on $N^\gamma$. Define $\pi_\gamma$ be the inverse of
$\omega_\gamma$. We choose $c_\gamma\in \mathbb R$ for all $\gamma$
with $l(\gamma)=2$ with $c_{\alpha\gamma\alpha^{-1}}=c_{\gamma}$ for
all $\alpha, \gamma\in \Gamma$. Define a constant Poisson structure
$\Pi$ on $S(V^*)\rtimes \Gamma$ by
\[
\Pi=\pi+\sum_{\gamma, l(\gamma)=2}c_\gamma \pi_\gamma.
\]
By Thm \ref{thm:quant-constant}, $S(V^*)\rtimes \Gamma$ has a
deformation quantization with respect to $\Pi$. This algebra can be
written as
\[
H_{\omega, c}=T(V^*)\otimes \Gamma[[\hbar]]/\langle x\otimes
y-y\otimes x-\hbar(\pi(x,y)+\sum_{\gamma, l(\gamma)=2}c_\gamma
\pi_\gamma(x,y)\gamma)\rangle.
\]
This algebra is called symplectic reflection algebra by Etingof and
Ginzburg \cite{E-G}.

To compute the Hochschild cohomology of $H_{\omega, c}$, we compute
the Poisson cohomology of $\Pi$ first. We recall that the Poisson
cohomology of a Poisson structure $\Pi$ on an algebra $A$ is defined
to be the cohomology of the complex $H^\bullet(A, A)$ with the
differential $\partial(a)=[\Pi, a]$, where $[\ ,\ ]$ is the
Gerstenhaber bracket on $H^\bullet(A,A)$.
\begin{proposition}
\label{prop:Poisson-constant}The Poisson cohomology
$H_\Pi^\bullet(S(V^*)\rtimes \Gamma)$ is equal to
\[
H_\Pi^\bullet(S(V^*)\rtimes \Gamma)=\mathbb R^{\times c_{\bullet}}.
\]
\end{proposition}

\begin{proof}
We introduce a grading on
\[
H^\bullet(S(V^*)\rtimes \Gamma, S(V^*)\rtimes
\Gamma)=\Big(\bigoplus_{\gamma\in \Gamma}S({V^\gamma}^*)\otimes
\wedge^{\bullet-l(\gamma)}V^\gamma \otimes
\wedge^{l(\gamma)}N^\gamma\Big)^\Gamma
\]
by setting elements in $S({V^\gamma}^*)\otimes
\wedge^{\bullet-l(\gamma)}V^\gamma \otimes
\wedge^{l(\gamma)}N^\gamma$ of degree $l(\gamma)$.

By Poisson cohomology with respect to $\Pi$, we mean the cohomology
on $H^\bullet(S(V^*)\rtimes \Gamma, S(V^*)\rtimes \Gamma)$ with the
differential $d_\Pi$ defined by taking the generalized
Schouten-Nijenhuis bracket $d_\Pi f=[\Pi,f ]$ for $f\in
H^\bullet(S(V^*)\rtimes \Gamma, S(V^*)\rtimes \Gamma)$. According to
\cite{H-T}[Thm. 3.4], the Poisson differential $d_\Pi$ is compatible
with the above filtration with respect to $l(\gamma)$ as if
$\deg(f)=i$, $(d_\Pi f)_\gamma=0$ if $l(\gamma)\ne i$ or $i+2$.
Therefore, we can use spectral sequence associated to the filtration
defined by the grading $l$ to compute the Poisson cohomology
$H_\Pi$.

The $E_0$ of the spectral sequence associated to the above
filtration is the Poisson cohomology with respect to the Poisson
structure $\pi$ which is the component of $\Pi$ supported at
identity of the group $\Gamma$ on the graded complex
$\text{Gr}(H^\bullet(S(V^*)\rtimes \Gamma, S(V^*)\rtimes \Gamma))$,
i.e.
\[
\text{Gr}^p(H^\bullet(S(V^*)\rtimes \Gamma, S(V^*)\rtimes
\Gamma))=\Big(\bigoplus_{\gamma, l(\gamma)=p}S({V^\gamma}^*)\otimes
\wedge^{\bullet-l(\gamma)}V^\gamma \otimes
\wedge^{l(\gamma)}N^\gamma\Big)^\Gamma.
\]
As $\omega$ is a symplectic form, the Poisson cohomology of $\pi$
can be computed easily
\[
E_{1}^{p,q}=\left\{\begin{array}{lc}H_\pi^{p}(S(V^*)\rtimes \Gamma)=\mathbb R^{\times c_{p}},&\qquad q=0,\\
\{0\},&\qquad q\ne 0.\end{array}\right.
\]

We notice that the codimension of any group element $\gamma$ is even
because $\gamma$ preserves the symplectic structure. Therefore,
$E_1^{p,q}= 0$ if and only if $p+q$ is odd. This implies that the
spectral sequence degenerates at $E_1$. Therefore we have
\[
H_\Pi^\bullet(S(V^*)\rtimes \Gamma)=H_\pi^\bullet(S(V^*)\rtimes
\Gamma)=\mathbb R^{\times c_{\bullet}}.
\]
\end{proof}
\begin{theorem}\label{thm:coh-constant}The Hochschild cohomology of
$H_{\omega,c}((\hbar))$ is computed as follows,
\[
H^\bullet(H_{\omega,c}((\hbar)), H_{\omega, c}((\hbar)))=\mathbb
R((\hbar))^{\times c_{\bullet}}.
\]
\end{theorem}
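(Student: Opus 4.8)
The plan is to reduce the Hochschild cohomology of the deformed algebra to the Poisson cohomology of its classical limit, which has already been determined in Proposition~\ref{prop:Poisson-constant}. Recall that $H_{\omega,c}$ is a flat $\mathbb{R}[[\hbar]]$-algebra with $H_{\omega,c}/\hbar H_{\omega,c}\cong S(V^*)\rtimes\Gamma$ and first-order term the constant Poisson structure $\Pi=\pi+\sum_{\gamma,\,l(\gamma)=2}c_\gamma\pi_\gamma$. I would equip the Hochschild cochain complex $C^\bullet(H_{\omega,c},H_{\omega,c})$ with the $\hbar$-adic filtration $F^p=\hbar^p C^\bullet(H_{\omega,c},H_{\omega,c})$ and study the associated spectral sequence. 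Since $H_{\omega,c}$ is $\hbar$-adically complete and torsion free over $\mathbb{R}[[\hbar]]$, the filtration is exhaustive and Hausdorff, so this spectral sequence converges; moreover, as the localization $\mathbb{R}[[\hbar]]\to\mathbb{R}((\hbar))$ is flat, tensoring the whole picture with $\mathbb{R}((\hbar))$ commutes with passing to cohomology, so it computes $H^\bullet(H_{\omega,c}((\hbar)),H_{\omega,c}((\hbar)))$.

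The first step is to identify the low pages. The associated graded complex is $C^\bullet(S(V^*)\rtimes\Gamma,S(V^*)\rtimes\Gamma)[[\hbar]]$ equipped with the $\hbar^0$-part of the Hochschild differential, whence
\[
E_1^{p,q}=\hbar^p\,H^{p+q}(S(V^*)\rtimes\Gamma,S(V^*)\rtimes\Gamma).
\]
By the standard deformation-theoretic description of the first-order term of a deformation, the induced differential $d_1$ is multiplication by $\hbar$ composed with the Gerstenhaber bracket with $\Pi$, that is $d_1=\hbar\, d_\Pi$ where $d_\Pi=[\Pi,-]$ is the Poisson differential of Subsection~\ref{Gerstenhaber}. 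After tensoring with $\mathbb{R}((\hbar))$ the class $\hbar$ becomes invertible, so the $d_1$-cohomology is exactly the Poisson cohomology of $\Pi$; by Proposition~\ref{prop:Poisson-constant} we obtain
\[
E_2=H_\Pi^\bullet(S(V^*)\rtimes\Gamma)\otimes_{\mathbb{R}}\mathbb{R}((\hbar))=\mathbb{R}((\hbar))^{\times c_\bullet}.
\]

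The second step is degeneration, where the symplectic hypothesis does the work. Because $\Gamma$ preserves $\omega$, every $\gamma\in\Gamma$ lies in $Sp(V)$, so each $l(\gamma)$ is even and $H_\Pi^\bullet$ is concentrated in even total cohomological degree $p+q$. Each higher differential $d_r$ (for $r\geq 2$) raises total degree by one and hence flips this parity, so it must vanish. Thus the spectral sequence degenerates at $E_2$, there are no extension problems over the field $\mathbb{R}((\hbar))$, and we conclude
\[
H^\bullet(H_{\omega,c}((\hbar)),H_{\omega,c}((\hbar)))=\mathbb{R}((\hbar))^{\times c_\bullet}.
\]

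The main obstacle is the first step: setting up the deformation spectral sequence carefully and verifying that $d_1$ really is the Gerstenhaber bracket with $\Pi$ rather than a higher-order or twisted correction, together with checking convergence over the Laurent ring. The degeneration, by contrast, is automatic from the evenness of all codimensions in the symplectic setting. It is precisely the passage to $\mathbb{R}((\hbar))$ --- inverting $\hbar$ so that $d_1=\hbar\, d_\Pi$ collapses onto the genuine Poisson differential --- that lets us avoid evaluating $c$ at any specific real parameter, and thereby removes the ``except possibly a countable set'' restriction of \cite{E-G}[Theorem 1.8].
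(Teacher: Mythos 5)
Your proposal is correct and follows essentially the same route as the paper: the $\hbar$-adic filtration on the Hochschild complex, identification of the first nontrivial differential with the Poisson differential $[\Pi,-]$ so that the next page is $H^\bullet_\Pi(S(V^*)\rtimes\Gamma)\otimes\mathbb{R}((\hbar))$ from Proposition~\ref{prop:Poisson-constant}, and degeneration forced by the vanishing of this Poisson cohomology in odd degree (since all $l(\gamma)$ are even in the symplectic case). The paper states this very tersely; your write-up merely supplies the convergence and flatness details it leaves implicit.
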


\begin{proof}
The proof of this theorem uses the spectral sequence with respect to
the $\hbar$-filtration. The $E_0$ terms are the Poisson cohomology.
We notice that $H^\bullet_\Pi(S(V^*)\rtimes \Gamma)$ is trivial when
$\bullet$ is odd, and conclude that the spectral sequence
degenerated at $E_1$. Therefore,  the Hochschild (co)homology equals
the Poisson (co)homology.
\end{proof}

\begin{remark}
Theorem \ref{thm:coh-constant} is a generalization of
\cite{E-G}[Theorem 1.8 (i)]. Here, with more information about the
generalized Schouten-Nijenhuis bracket, we are able to avoid the
restriction of \cite{E-G} on ``except possibly a countable set".
\end{remark}

In Thm \ref{thm:coh-constant}, we see that for a constant Poisson
structure $\Pi$, if the Poisson structure at the identity component
$\pi_0$ is the inverse of a symplectic structure, then its Poisson
cohomology and Hochschild cohomology are determined by $\pi_0$
completely.

In the following we show one example that if the Poisson structure
$\pi_0$ is degenerated, then the Poisson cohomology of $\Pi$ depends
also on the information of $\Pi$ at other conjugacy classes.

Consider $(V=\mathbb {R}^2, \omega=dx\wedge dy)$, and $\mathbb
Z_2:=\mathbb{Z}/2\mathbb {Z}=\{ 1, e\}$ acts on $\mathbb {R}^2$ by
$e(x,y)=(-x,-y)$. Denote $\pi=-\partial_x\wedge \partial_y$. Observe
that 0 is the only fixed point of $e$. For any constant $c$,
consider $\Pi=c\pi e: x\wedge y\mapsto c\pi(x,y)e$ which defines a
constant Poisson structure on $S(V^*)\rtimes \mathbb{Z}_2$. In the
following, we show that the Poisson cohomology of $\Pi$ does
distinguish $\Pi$ from the trivial Poisson structure and also those
Poisson structures in Proposition \ref{prop:Poisson-constant}.

\begin{proposition}\label{prop:counter-constant}
\[
\begin{split}
H_{\Pi}^0&=(S(V^*))^{\mathbb Z_2}\\
H_{\Pi}^1&=\{f_1\partial_x+f_2\partial_y\in
(S(V^*)\otimes V)^{\mathbb Z_2}:\partial_xf_1(0)+\partial_yf_2(0)=0\}\\
H_{\Pi}^2&=(S(V^*)\otimes \wedge^2V)^{\mathbb Z_2}.
\end{split}
\]
\end{proposition}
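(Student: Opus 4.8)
The plan is to first pin down the Hochschild cohomology $H^\bullet(S(V^*)\rtimes\mathbb{Z}_2, S(V^*)\rtimes\mathbb{Z}_2)$ on which $d_\Pi=[\Pi,-]$ acts, and then run the Poisson differential degree by degree. By Equation (\ref{N-P-P-T}), $\mathbb{Z}_2$ has the two conjugacy classes $\{1\}$ and $\{e\}$, with $V^1=V$, $l(1)=0$, and $V^e=\{0\}$, $N^e=V$, $l(e)=2$. Hence the identity component contributes $(S(V^*)\otimes\wedge^\bullet V)^{\mathbb{Z}_2}$ in degrees $0,1,2$, while the $e$-component $(S({V^e}^*)\otimes\wedge^{\bullet-2}V^e\otimes\wedge^2 N^e)^{\mathbb{Z}_2}=(\wedge^2 V)^{\mathbb{Z}_2}$ survives only in degree $2$, since $\wedge^{\bullet-2}V^e$ vanishes unless $\bullet=2$. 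As $e$ acts by $-1$ on $V$ and trivially on $\wedge^2 V$, one reads off $H^0=(S(V^*))^{\mathbb{Z}_2}$, $H^1=(S(V^*)\otimes V)^{\mathbb{Z}_2}$, and $H^2=(S(V^*)\otimes\wedge^2 V)^{\mathbb{Z}_2}\oplus(\wedge^2 V)^{\mathbb{Z}_2}$ with the last summand one-dimensional; also $H^3=0$ since $\dim V=2$. Throughout I assume $c\ne 0$, the case $c=0$ giving the undeformed groups.

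Next I would compute the differential. Since $\Pi=c\pi\,e$ is supported entirely at $e$ with $l(e)=2$, and $H^0,H^1$ are supported at the identity, the bracket formulas of Section \ref{Gerstenhaber} apply ($\mathbb{Z}_2$ being abelian): for $f$ at the identity component, $[\Pi,f]$ is supported at $e\cdot 1=e$ and equals $c\,\textsf{pr}_e\{\pi,f\}$, where $\{\cdot,\cdot\}$ is the Schouten--Nijenhuis bracket and $\textsf{pr}_e$ is the projection of Section \ref{Koszul}. Because $V^e=\{0\}$, this projection restricts the polynomial coefficient to the origin, i.e.
\[
\textsf{pr}_e(g\,\partial_x\wedge\partial_y)=g(0)\,\partial_x\wedge\partial_y .
\]

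I would then carry out the three degrees. In degree $0$, $\{\pi,f\}$ is a vector field, and $\textsf{pr}_e$ annihilates every degree-$1$ cochain (it would land in $\wedge^{-1}V^e=0$), so $d_\Pi^0=0$ and $H_\Pi^0=H^0=(S(V^*))^{\mathbb{Z}_2}$. In degree $1$, for $f=f_1\partial_x+f_2\partial_y$ a short Schouten computation gives $\{\pi,f\}=\pm(\partial_x f_1+\partial_y f_2)\,\partial_x\wedge\partial_y$, the divergence of $f$, whence $d_\Pi^1 f=\pm c\,(\partial_x f_1(0)+\partial_y f_2(0))\,\partial_x\wedge\partial_y$. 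Since $d_\Pi^0=0$, this produces $H_\Pi^1=\ker d_\Pi^1=\{f_1\partial_x+f_2\partial_y\in(S(V^*)\otimes V)^{\mathbb{Z}_2}:\partial_x f_1(0)+\partial_y f_2(0)=0\}$. Finally, $d_\Pi^1$ maps onto the one-dimensional $e$-component of $H^2$ (for instance $f=x\partial_x$ gives a nonzero multiple of $\partial_x\wedge\partial_y$), while $H^3=0$ forces $d_\Pi^2=0$; hence $H_\Pi^2=H^2/\mathrm{im}\,d_\Pi^1=(S(V^*)\otimes\wedge^2 V)^{\mathbb{Z}_2}$.

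The only delicate points are the bookkeeping ones: tracking that $[\Pi,-]$ sends the identity component to the $e$-component (so that $d_\Pi^0$ vanishes and $d_\Pi^1$ lands in a one-dimensional space), and identifying $\textsf{pr}_e$ as evaluation at the origin, which is exactly what converts the divergence $\partial_x f_1+\partial_y f_2$ into the pointwise condition $\partial_x f_1(0)+\partial_y f_2(0)=0$ cutting out $H_\Pi^1$. The Schouten-bracket and surjectivity computations themselves are routine.
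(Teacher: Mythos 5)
Your proof is correct and follows essentially the same route as the paper: read off the Hochschild cohomology from Equation (\ref{N-P-P-T}) (identity component in degrees $0,1,2$, a one-dimensional $e$-component only in degree $2$), then compute $d_\Pi$ degree by degree, identifying $\textsf{pr}_e$ as evaluation at the origin so that $d_\Pi^1$ becomes the divergence at $0$ and surjects onto the $e$-component of $H^2$. Your explicit remarks (the $c\ne 0$ caveat and the witness $x\partial_x$ for surjectivity) are slightly more careful than the paper's terse argument but do not change the approach.
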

\begin{proof}
The computation for $H^0$ are trivial as there is only degree=2
elements supported at $e$ by Equation (\ref{N-P-P-T}).

For $H^1$, we first observe that the image of $d^{\Pi}$ on
$(S(V^*))^{\mathbb Z_2}$ is trivial by Equation (\ref{N-P-P-T}). For
the kernel of $d^{\Pi}$, we compute
\[
[f_1\partial_x+f_2\partial_y,
\Pi]=c(\partial_x(f_1)+\partial_y(f_2))\partial_x\wedge
\partial_y|_{0},
\]
where $c(\partial_x(f_1)+\partial_y(f_2))\partial_x\wedge
\partial_y|_{0}$ is the restriction of $c(\partial_x(f_1)+\partial_y(f_2))\partial_x\wedge
\partial_y$ to the origin $0$. Therefore, $H_{\Pi}^1=\{f_1\partial_x+f_2\partial_y\in
(S(V^*)\otimes V)^{\mathbb
Z_2}:\partial_xf_1(0)+\partial_yf_2(0)=0\}$.

For $H_2^{\Pi}$, we notice that $d^{\Pi\\}$ vanishes as there is no
higher degree terms. For the image of $d^{\Pi}$, from the previous
computation, we see that
$Im(d^{\Pi})=\mathbb{R}\partial_x\wedge\partial_y|_{0}$. Therefore
we conclude from Equation (\ref{N-P-P-T}) that
$H_{\Pi}^2=(S(V^*)\otimes \wedge^2V)^{\mathbb Z_2}$.
\end{proof}

\subsection{Hochschild cohomology of $\mathcal U(\mathfrak g)\rtimes \Gamma$}
Let $V=\mathfrak g$ be a Lie algebra such that its bracket is
$\Gamma$ invariant. The Lie bracket on $\mathfrak g$ defines a
Poisson structure on $V^*$, which also defines a Poisson structure
on $S(V)\rtimes \Gamma$. Then, $\mathcal U(\mathfrak g)\rtimes
\Gamma$ is a quantization of $\pi$ on $S(V)\rtimes \Gamma$. We
notice that the Poisson bracket $\pi$ does not have any other
$\gamma$-component for $\gamma$ different from the unity, i.e.
$\pi=\pi_0$.

Now, $\mathcal U(\mathfrak g)\rtimes \Gamma$ is a filtered Koszul
algebra over $\mathbb R\Gamma$. Therefore, it has a Koszul
resolution and a small complex which calculates its Hochschild
cohomology. This complex splits into a direct sum of subcomplexes:
$$
(CK^\bullet(\mathcal U(\mathfrak g)\rtimes \Gamma),\partial) =
\bigoplus_{\gamma\in C(\Gamma)}(CK_\gamma^\bullet,\partial_\gamma)
$$
where $CK_\gamma^\bullet=(\mathcal U(\mathfrak g)\otimes
\wedge^\bullet \mathfrak g^*)^{Z(\gamma)}$ and :
$$
\begin{array}{ccl}
\partial_\gamma(f)(x_0,\cdots,x_n)&=& \displaystyle\sum_{i=0}^n (-1)^i f(x_0, \cdots  \widehat{x}_i\cdots, x_n) (x_i-^\gamma x_i)\\[5mm]
&& \hspace{-5cm}+\displaystyle\sum_{i=0}^n (-1)^{i}\big[x_i,f(x_0,
\cdots  \widehat{x}_i\cdots, x_n)\big] +
\displaystyle\sum_{i<j}(-1)^{j-i-1} f(x_{0},\cdots
x_{i-1},[x_{i},x_{j}],  x_{i+1}, \cdots \widehat{x}_j\cdots, x_{n}).
\end{array}
$$

The brackets in the above formula stand for the Lie bracket of
$\mathfrak g$ and for the action of $\mathfrak g$ on $\mathcal
U(\mathfrak g)$. Notice that the PBW property of $\mathcal
U(\mathfrak g)$ implies that the symmetrization map from $S\mathfrak
g$ to $\mathcal U(\mathfrak g)$ is an isomorphism of $\mathfrak
g$-modules as well as $\Gamma$-modules.

According to \ref{Gerstenhaber}, the Poisson complex splits as well
in a direct sum of subcomplexes
$$
(C^\bullet_{\pi}(\mathcal U(\mathfrak g)\rtimes
\Gamma),\partial^\pi) = \bigoplus_{\gamma\in
C(\Gamma)}(C_\gamma^\bullet,\partial^\pi _\gamma),
$$
where $C_\gamma^\bullet=(S(\mathfrak g^\gamma)\otimes
\wedge^{\bullet-l(\gamma)}{\mathfrak g^\gamma}^* \otimes
\wedge^{l(\gamma)}{N^\gamma}^*)^{Z(\gamma)}$ with the differential :
$$
\begin{array}{ccl}
\partial^\pi _\gamma(f)(x_0,\cdots,x_{n-l(\gamma)},y_1,\cdots, y_{l(\gamma)})
&=&\\
&&\hspace{-5cm}\displaystyle\sum_{0\leq i\leq n-l(\gamma)} (-1)^{i}\big[x_i,f(x_0, \cdots  \widehat{x}_i\cdots, x_{n-l(\gamma)},y_{1},\cdots,y_{l(\gamma)})\big] \\[5mm]
&& \hspace{-5cm}+ \displaystyle\sum_{i<j\leq
n-l(\gamma)}(-1)^{j-i-1} f(x_{0},\cdots x_{i-1},[x_{i},x_{j}],
x_{i+1}, \cdots \widehat{x}_j\cdots,
x_{n-l(\gamma)},y_1,\cdots,y_{l(\gamma)})
\\[5mm]
&& \hspace{-5cm}+ \displaystyle\sum_{i\leq
n-l(\gamma),j}(-1)^{n-l(\gamma)-i+j-1} f(x_{0},\cdots
\widehat{x}_i\cdots,
x_{n-l(\gamma)},y_1,\cdots,[x_i,y_j],\cdots,y_{l(\gamma)})
\end{array}
$$
where the $x$-variables belong to $\mathfrak g^\gamma$ and the $y$'s
belong to $N^\gamma$. Notice that, as the bracket of $\mathfrak g$
is $\Gamma$-invariant, $N^\gamma$ is a $\mathfrak
g^\gamma$-submodule of $ \mathfrak g$. This defines the last summand
of $\partial^\pi_\gamma$.

We define a map $\psi$ from the Poisson complex $C_\gamma^\bullet$
to the Koszul complex $CK_\gamma^\bullet$. For any $f$ in
$C_\gamma^n$, we put :
$$
\psi(f) := \Lambda^n\mathfrak g  \longrightarrow
\Lambda^{n-l(\gamma)}\mathfrak g^\gamma \otimes
\Lambda^{l(\gamma)}N^\gamma  \mathop{\longrightarrow}\limits^f
S\mathfrak g^\gamma \mathop{\longrightarrow}\limits^{Sym} \mathcal
U(\mathfrak g^\gamma)  \longrightarrow \mathcal U(\mathfrak g)
$$
where the first and last map are the usual projection and injection,
and $Sym$ is the classical symmetrization map from $S\mathfrak
g^\gamma$ to $\mathcal U(\mathfrak g^\gamma)$.

\begin{lemma}
The map $\psi ^*$ is a morphism of complexes.
\end{lemma}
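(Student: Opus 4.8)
The plan is to verify directly that $\psi$ intertwines the two differentials, i.e. that $\partial_\gamma\circ\psi=\psi\circ\partial^\pi_\gamma$ on each $C_\gamma^n$; this is what it means for $\psi^*$ to be a morphism of complexes. Both $\partial_\gamma(\psi(f))$ and $\psi(\partial^\pi_\gamma f)$ are Koszul $(n+1)$-cochains, that is, alternating maps $\Lambda^{n+1}\mathfrak g\to\mathcal U(\mathfrak g)$, so it suffices to compare them on decomposable elements $x_0\wedge\cdots\wedge x_n$ sorted by the number $k$ of factors lying in $N^\gamma$ (the remaining $n+1-k$ lying in $\mathfrak g^\gamma$). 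Since $\psi$ factors through the projection $\Lambda^m\mathfrak g\to\Lambda^{m-l(\gamma)}\mathfrak g^\gamma\otimes\Lambda^{l(\gamma)}N^\gamma$, the cochain $\psi(f)$ is nonzero only on arguments carrying exactly $l(\gamma)$ normal factors, and the same holds for $\psi(\partial^\pi_\gamma f)$. Inspecting the three summands of $\partial_\gamma$ against this constraint, one finds that $\partial_\gamma(\psi(f))$ can be nonzero only for $k=l(\gamma)$ and $k=l(\gamma)+1$. Thus the whole statement reduces to two checks: matching on $k=l(\gamma)$, and vanishing on $k=l(\gamma)+1$. Throughout, the (routine) sign bookkeeping comes from reordering $x_0\wedge\cdots\wedge x_n$ into its $\mathfrak g^\gamma$-part wedged with its $N^\gamma$-part.

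For $k=l(\gamma)$ I would argue as follows. The first summand of $\partial_\gamma$, carrying the factor $x_i-{}^\gamma x_i$, dies: the only indices for which $\psi(f)$ survives on the remaining arguments are those with $x_i\in\mathfrak g^\gamma$, and for such $x_i$ one has $x_i-{}^\gamma x_i=0$. The second summand, $\sum_i(-1)^i[x_i,\psi(f)(\ldots\widehat{x_i}\ldots)]$, survives only for $x_i\in\mathfrak g^\gamma$ and then matches the adjoint-action term of $\partial^\pi_\gamma$; here the essential input is that the symmetrization $\mathrm{Sym}\colon S\mathfrak g^\gamma\to\mathcal U(\mathfrak g^\gamma)$ is $\mathfrak g^\gamma$-equivariant, so that $[x_i,\mathrm{Sym}(p)]=\mathrm{Sym}(\mathrm{ad}_{x_i}p)$, which is exactly the quoted PBW consequence. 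The third summand matches the two bracket terms of $\partial^\pi_\gamma$: when both $x_i,x_j\in\mathfrak g^\gamma$ we use $[x_i,x_j]\in\mathfrak g^\gamma$ and direct substitution, and when one of them lies in $N^\gamma$ we use that $N^\gamma$ is a $\mathfrak g^\gamma$-submodule, so $[\mathfrak g^\gamma,N^\gamma]\subseteq N^\gamma$ keeps the normal count equal to $l(\gamma)$; the remaining possibility, bracketing two normal vectors, makes $\psi(f)$ vanish. Collecting these identifications yields $\partial_\gamma(\psi(f))=\psi(\partial^\pi_\gamma f)$ on $k=l(\gamma)$.

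The case $k=l(\gamma)+1$ is the heart of the matter, since here $\psi(\partial^\pi_\gamma f)=0$ and one must show $\partial_\gamma(\psi(f))=0$ as well. Write $y_1,\dots,y_{l(\gamma)+1}$ for the normal arguments. Grading $\mathcal U(\mathfrak g)$ by the number of $N^\gamma$-factors, the surviving contributions split into a part of normal-weight one (the first two summands of $\partial_\gamma$, which contribute only through indices $x_i=y_a\in N^\gamma$) and a part of normal-weight zero (the third summand with both bracketed vectors in $N^\gamma$, keeping the $N^\gamma$-component $[y_a,y_b]_N$ of the bracket). The key simplification is that, for a fixed index $y_a$, the first two summands combine: setting $u_a:=\psi(f)$ evaluated on all arguments except $y_a$, one has
\[
(-1)^{p_a}\big(u_a(y_a-{}^\gamma y_a)+[y_a,u_a]\big)=(-1)^{p_a}\big(y_a\,u_a-u_a\,{}^\gamma y_a\big),
\]
where $p_a$ is the position of $y_a$. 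Now $u_a$ depends on $y_a$ only through the wedge $\omega_a:=y_1\wedge\cdots\widehat{y_a}\cdots\wedge y_{l(\gamma)+1}\in\Lambda^{l(\gamma)}N^\gamma$, so $u_a=\Phi(\omega_a)$ for a fixed linear map $\Phi$ (absorbing the common $\mathfrak g^\gamma$-data). Hence the weight-one part is the image, under fixed linear maps, of $\sum_a(-1)^{a-1}y_a\otimes\omega_a$, which is precisely the coproduct of $y_1\wedge\cdots\wedge y_{l(\gamma)+1}$ into $N^\gamma\otimes\Lambda^{l(\gamma)}N^\gamma$; similarly the weight-zero part is the image of the coproduct into $\Lambda^2 N^\gamma\otimes\Lambda^{l(\gamma)-1}N^\gamma$ followed by the bracket $[\cdot,\cdot]_N$ and wedging. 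Because $\dim N^\gamma=l(\gamma)$, we have $\Lambda^{l(\gamma)+1}N^\gamma=0$, so $y_1\wedge\cdots\wedge y_{l(\gamma)+1}=0$ and all these coproducts vanish. Therefore both parts of $\partial_\gamma(\psi(f))$ are zero on $k=l(\gamma)+1$.

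The main obstacle is thus the third paragraph: recognizing that the apparently unrelated terms produced by the Koszul differential on arguments with one excess normal direction assemble, after the cancellation $u_a(y_a-{}^\gamma y_a)+[y_a,u_a]=y_a u_a-u_a\,{}^\gamma y_a$, into total antisymmetrizations over the $l(\gamma)+1$ normal vectors, and hence factor through $\Lambda^{l(\gamma)+1}N^\gamma=0$. Granting the two cases, we conclude $\partial_\gamma\circ\psi=\psi\circ\partial^\pi_\gamma$, so $\psi^*$ is a morphism of complexes. The only delicate computations are the signs relating the position signs $(-1)^{p_a}$ to the coproduct signs $(-1)^{a-1}$ and, in the $k=l(\gamma)$ case, the bookkeeping in the $\mathrm{Sym}$-equivariance step; both are routine.
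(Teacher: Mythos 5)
Your proof is correct, and your treatment of the component with exactly $l(\gamma)$ normal arguments is precisely the paper's argument: the term $(x_i-{}^\gamma x_i)$ kills the first summand, $\mathfrak g^\gamma$-equivariance of $Sym$ matches the adjoint terms, and the facts that $\mathfrak g^\gamma$ is a subalgebra and $N^\gamma$ a $\mathfrak g^\gamma$-submodule match the bracket terms. Where you diverge is in what you call the heart of the matter. The case $k=l(\gamma)+1$ is vacuous for a much more elementary reason than your coproduct argument: every cochain in sight lies in $\mathcal U(\mathfrak g)\otimes\Lambda^\bullet\mathfrak g^*$, hence is a linear map on $\Lambda^{n+1}\mathfrak g=\bigoplus_{p=0}^{l(\gamma)}\Lambda^{n+1-p}\mathfrak g^\gamma\otimes\Lambda^pN^\gamma$, and the would-be component with $p>l(\gamma)$ is already zero since $\dim N^\gamma=l(\gamma)$ --- a tuple containing $l(\gamma)+1$ vectors of $N^\gamma$ has vanishing wedge, so any alternating map annihilates it before you compute anything. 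The paper exploits this by writing the decomposition only up to $p=l(\gamma)$; the remaining work is then the trivial vanishing on $p<l(\gamma)$ (where $\psi(f)$ cannot receive enough normal arguments) and the matching on $p=l(\gamma)$. Note also that your claim that only $k=l(\gamma)$ and $k=l(\gamma)+1$ can contribute is not quite what inspection of the three summands gives: for $k=l(\gamma)+2$ the third summand, applied to two normal vectors and retaining the $\mathfrak g^\gamma$-component of their bracket, lands back on a tuple with exactly $l(\gamma)$ normal factors, so it is not excluded by counting alone --- it too is rescued only by $\Lambda^{>l(\gamma)}N^\gamma=0$. This is harmless, but it is a symptom of organizing the proof around tuples rather than around the (already truncated) decomposition of the exterior power; adopting the latter removes your entire third paragraph and the attendant sign bookkeeping.
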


\begin{proof}
We have to check that $\psi$ commutes with the differentials. For
this purpose, let us decompose $\partial_\gamma$ into a sum of three
terms, $\partial^1_\gamma +\partial^2_\gamma +\partial^3_\gamma$
corresponding to the three components of its definition. We also use
the following decomposition of $\Lambda^{n+1}\mathfrak g$ coming
from the direct sum $\mathfrak g=\mathfrak g^\gamma\oplus N^\gamma$
:
$$
\Lambda^{n+1}\mathfrak g=\bigoplus_{p=0}^{l(\gamma)}\Lambda^{n+1-p}\mathfrak g^\gamma\otimes \Lambda^pN^\gamma
$$

First, as $\psi(f)$ needs $l(\gamma)$ independent variables in
$N^\gamma$, it follows easily that $\partial^1_\gamma (\psi(f))=0$.
For the same reason, we check that  $\partial(\psi(f))$ is null on
the $\Lambda^{n+1-p} \mathfrak g^\gamma \otimes \Lambda^p N^\gamma$
whenever $p<l(\gamma)$. Therefore, $\partial(\psi(f))$ reduces to a
map from $\Lambda^{n+1-l(\gamma)}\mathfrak g^\gamma\otimes
\Lambda^{l(\gamma)}N^\gamma$ to $\mathcal U(\mathfrak g^\gamma)$.

To complete the proof, we check that the two formulas of the
differentials agree on $\Lambda^{n+1-l(\gamma)}\mathfrak
g^\gamma\otimes \Lambda^{l(\gamma)}N^\gamma$ thanks to the fact that
the symmetrization map $Sym$ is a $\mathfrak g$-morphism.
\end{proof}

\begin{theorem}\label{thm:coh-ug}
The map $\psi$ is a quasi-isomorphism. Therefore the Hochschild
cohomology of $\mathcal U(\mathfrak g)\rtimes \Gamma$ is isomorphic,
as a graded vector space, to the Poisson cohomology of $S(\mathfrak
g)\rtimes \Gamma$ with the Poisson bracket induced by the Lie
bracket of $\mathfrak g$ :
$$
H^\bullet( \mathcal U(\mathfrak g)\rtimes \Gamma, \mathcal
U(\mathfrak g)\rtimes \Gamma)\simeq H_\pi^\bullet(S(\mathfrak
g)\rtimes \Gamma).
$$
\end{theorem}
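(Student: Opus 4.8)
The plan is to compare the two complexes through the Poincar\'e--Birkhoff--Witt filtration and a spectral sequence argument, reducing the statement to the quasi-isomorphisms $\textsf{pr}_\gamma,\iota$ already recorded in subsection \ref{Koszul}. Since the assertion is fibered over the conjugacy classes (both complexes split as $\bigoplus_{\gamma\in C(\Gamma)}$), I would fix $\gamma$ and prove that $\psi\colon C_\gamma^\bullet\to CK_\gamma^\bullet$ is a quasi-isomorphism; summing over $C(\Gamma)$ then yields the asserted graded isomorphism $H^\bullet(\mathcal U(\mathfrak g)\rtimes\Gamma,\mathcal U(\mathfrak g)\rtimes\Gamma)\simeq H_\pi^\bullet(S(\mathfrak g)\rtimes\Gamma)$.

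First I would set up the filtration. Let $q$ denote the PBW order on $\mathcal U(\mathfrak g)$ (resp.\ the polynomial degree on $S(\mathfrak g^\gamma)$) and $n$ the cochain degree, and filter both complexes by the weight $w=q-n$. The key computation is the behaviour of the three pieces $\partial_\gamma=\partial^1_\gamma+\partial^2_\gamma+\partial^3_\gamma$ with respect to $q$: multiplication by $(x_i-{}^\gamma x_i)$ in $\partial^1_\gamma$ raises the order by one, whereas the commutator term $[x_i,-]$ in $\partial^2_\gamma$ and the bracket--reindexing term $\partial^3_\gamma$ preserve the order (the symbol of $[x_i,-]$ being the Lie--Poisson bracket). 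Hence $\partial^1_\gamma$ preserves $w$ while $\partial^2_\gamma+\partial^3_\gamma$ lowers it by one, so $\partial_\gamma$ is compatible with the (exhaustive, bounded below) $w$-filtration and its symbol is exactly $\partial^1_\gamma$, the commutative Koszul differential of $S(\mathfrak g)\rtimes\Gamma$. On the Poisson side $\partial^\pi_\gamma$ is purely of Lie type and strictly lowers $w$, so its symbol vanishes. Because the symmetrization map $\mathrm{Sym}$ preserves the order filtration and induces the identity on the associated graded, $\psi$ is a filtered morphism and $\mathrm{gr}(\psi)$ is precisely the embedding $\iota$ of subsection \ref{Koszul}.

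With this in hand the argument becomes formal. On the Koszul side the $E_0$-page is the commutative Koszul complex $(S(\mathfrak g)\otimes\wedge^\bullet\mathfrak g^*)^{Z(\gamma)}$ with differential $\partial^1_\gamma$, whose cohomology is, by subsection \ref{Koszul} and Equation (\ref{N-P-P-T}), the polyvector space $(S(\mathfrak g^\gamma)\otimes\wedge^{\bullet-l(\gamma)}{\mathfrak g^\gamma}^*\otimes\wedge^{l(\gamma)}{N^\gamma}^*)^{Z(\gamma)}$ underlying $C_\gamma^\bullet$; the residual differential induced by $\partial^2_\gamma+\partial^3_\gamma$ is the Lichnerowicz--Poisson differential $\partial^\pi_\gamma$, so the spectral sequence converges to the Poisson cohomology. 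The same $w$-filtration on $C_\gamma^\bullet$ has zero $E_0$-differential, hence $E_1=(C_\gamma^\bullet,\partial^\pi_\gamma)$ and likewise converges to the Poisson cohomology. Since $\mathrm{gr}(\psi)=\iota$ is a quasi-isomorphism for $\partial^1_\gamma$, the induced map on $E_1$ is an isomorphism; as both filtrations are bounded below and exhaustive, the comparison theorem for filtered complexes upgrades this to an isomorphism on $E_\infty$ and hence on cohomology. Note that the previous lemma already guarantees $\psi$ is a chain map, so the precise identification of the $E_1$-differential with $\partial^\pi_\gamma$ is not logically needed: an $E_0$-level quasi-isomorphism together with convergence suffices.

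The step I expect to be the main obstacle is the filtration bookkeeping, namely isolating the symbol $\partial^1_\gamma$ of $\partial_\gamma$ as the genuine commutative Koszul differential --- this is what cleanly separates the ``de Rham/Koszul'' direction from the ``Poisson'' direction and lets subsection \ref{Koszul} apply verbatim on $E_0$. One must verify carefully that $[x_i,-]$ really preserves the PBW order rather than raising it, and that $\mathrm{Sym}$ is the identity on the associated graded so that $\mathrm{gr}(\psi)=\iota$ exactly; the convergence of both spectral sequences is then routine from the boundedness of the $w$-filtration in each cochain degree.
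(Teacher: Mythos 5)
Your proof is correct, but it takes a genuinely different route from the paper's. The paper sidesteps the filtration bookkeeping entirely by introducing a formal parameter: it replaces $\mathfrak g$ by the $\mathbb R[[\hbar]]$-Lie algebra $\mathfrak g_\hbar$ with bracket $[x,y]_\hbar=\hbar[x,y]$, observes that the Koszul and Poisson complexes of $\mathcal U(\mathfrak g_\hbar)\rtimes\Gamma$ interpolate between the $S(\mathfrak g)\rtimes\Gamma$ situation at $\hbar=0$ and the $\mathcal U(\mathfrak g)\rtimes\Gamma$ situation at $\hbar=1$, notes that $\psi$ extends $\hbar$-linearly and specializes at $\hbar=0$ to the quasi-isomorphism of Subsection \ref{Koszul}, and then invokes the standard lemma (quoted from \cite{G-H}) that a morphism of topologically free $\mathbb R[[\hbar]]$-complexes which is a quasi-isomorphism at $\hbar=0$ is a quasi-isomorphism. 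Your argument is in effect the de-Rees-ified version of this: the $\hbar$-family is the Rees construction attached to your PBW weight filtration $w=q-n$, and the quoted lemma is itself proved by the spectral sequence you run by hand. What your version buys is self-containedness and explicitness: the convergence hypotheses (the filtration is exhaustive and bounded below in each cochain degree, since $w\geq -n$) are verified rather than outsourced, and you bypass the specialization at $\hbar=1$, a step the paper treats as immediate but which strictly speaking requires the compatibility of the $\hbar$-parameter with the internal polynomial grading (equivalently, the Rees-module interpretation) to pass from a quasi-isomorphism over $\mathbb R[[\hbar]]$ to one at $\hbar=1$. What the paper's version buys is brevity and a reusable black box. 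Your individual computations are all sound: $\partial^1_\gamma$ raises the PBW order by one while $[x_i,-]$ and the reindexing term preserve it (a standard consequence of PBW), $Sym$ induces the identity on the associated graded so that $\mathrm{gr}(\psi)=\iota$, and your closing remark that the identification of the $E_1$-differential with $\partial^\pi_\gamma$ is not logically needed once $\psi$ is known to be a filtered chain map is a genuine economy over what one might naively think is required.
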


\begin{proof}
Let us introduce a formal parameter $\hbar$ and work over the formal
power series $\mathbb R[[\hbar]]$. Introduce the $\mathbb
R[[\hbar]]$-Lie algebra $\mathfrak g_\hbar$ whose underlying
$\mathbb R[[\hbar]]$-module is $\mathfrak g[[\hbar]]$ with the Lie
bracket given by :
$$
[x,y]_\hbar = \hbar[x,y],
$$
for $x$ and $y$ in $\mathfrak g$. Then, the Koszul complex
$CK^\bullet(\mathcal U(\mathfrak g_h) \rtimes \Gamma)$ specializes
to that of $S(\mathfrak g) \rtimes \Gamma$ for $\hbar=0$, and, for
$\hbar=1$ to that of $\mathcal U(\mathfrak g) \rtimes \Gamma$.

Similarly, the Poisson complex $C_\pi^\bullet(\mathcal U(\mathfrak
g_\hbar)\rtimes \Gamma)$ has a zero differential for $\hbar=0$ and
specializes to $C_\pi^\bullet(\mathcal U(\mathfrak g) \rtimes
\Gamma)$ for $\hbar=1$.

The map $\psi$ extends to the $\mathbb R[[\hbar]]$-context, and
defines a morphism of $\mathbb R[[\hbar]]$-complexes. It follows
from Subsection \ref{Koszul} that $\psi$ specializes to a
quasi-isomorphism for $\hbar=0$. The result is then a consequence of
the following standard lemma,  which can be found in \cite{G-H}.
\end{proof}

\begin{lemma}
Let $C_1^\bullet[[\hbar]]$ and $C_2^\bullet[[\hbar]]$ be two
topologically free $\mathbb R[[\hbar]]$-complexes, and $\psi$ a
morphism of $\mathbb R[[\hbar]]$-complexes. Suppose that $\psi$
specializes to a quasi-isomorphism for $\hbar=0$. Then $\psi$ is a
quasi-isomorphism.
\end{lemma}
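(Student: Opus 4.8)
The plan is to reduce the statement to the acyclicity of a single complex and then run an $\hbar$-adic successive approximation argument. First I would form the mapping cone $M^\bullet=\mathrm{Cone}(\psi)$, with $M^n=C_1^{n+1}[[\hbar]]\oplus C_2^n[[\hbar]]$ and differential $d_M(x,y)=(-d_1x,\ \psi(x)+d_2y)$. Since $C_1^\bullet[[\hbar]]$ and $C_2^\bullet[[\hbar]]$ are topologically free, so is $M^\bullet$, and its specialization at $\hbar=0$ is exactly the mapping cone of the reduced map $\psi|_{\hbar=0}$. By hypothesis $\psi|_{\hbar=0}$ is a quasi-isomorphism, so this reduced cone is acyclic. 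Because a morphism of complexes is a quasi-isomorphism if and only if its mapping cone is acyclic, it remains only to prove the following claim: a topologically free $\mathbb R[[\hbar]]$-complex whose $\hbar=0$ specialization is acyclic is itself acyclic.

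To prove the claim, write $M^\bullet\cong W^\bullet[[\hbar]]$ for $\mathbb R$-vector spaces $W^n$, and let $c\in M^n$ be a cocycle; I must exhibit a primitive, which I construct by successive approximation. Setting $c_0=c$, the reduction $\bar c_0$ is a cocycle in the acyclic complex $M^\bullet/\hbar M^\bullet$, hence $\bar c_0=d\bar b_0$; lifting $\bar b_0$ to $b_0\in M^{n-1}$ gives $c_0-db_0\equiv 0\pmod{\hbar}$, so $c_0-db_0=\hbar c_1$ for a unique $c_1$, using that $M^\bullet$ is $\hbar$-torsion free. Since $c_0$ and $db_0$ are cocycles, $\hbar\,dc_1=0$, whence $dc_1=0$ again by torsion-freeness, and the construction repeats. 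I thereby obtain $b_i\in M^{n-1}$ and cocycles $c_{i+1}$ with $c_i-db_i=\hbar c_{i+1}$ for all $i\geq 0$. Setting $b=\sum_{i\geq 0}\hbar^i b_i$, which converges $\hbar$-adically by completeness, and using continuity of $d$, one checks from the telescoping identity $c=d\bigl(\sum_{i=0}^N\hbar^i b_i\bigr)+\hbar^{N+1}c_{N+1}$ together with $\hbar^{N+1}c_{N+1}\to 0$ that $db=c$. Thus every cocycle is a coboundary and $M^\bullet$ is acyclic.

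The step I expect to be the crux is this last approximation argument, and it is precisely where the topological freeness hypothesis is indispensable: $\hbar$-torsion-freeness is what allows me both to divide by $\hbar$ and to promote the fact that $\hbar c_{i+1}$ is a cocycle to $c_{i+1}$ itself being a cocycle, while $\hbar$-adic completeness is what makes the series $\sum_i\hbar^i b_i$ converge to an honest element of $M^{n-1}$. Everything else---building the cone, identifying its reduction modulo $\hbar$, and the equivalence between acyclicity of the cone and $\psi$ being a quasi-isomorphism---is formal homological algebra valid over any ring.
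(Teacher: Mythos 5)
Your proof is correct. The paper itself gives no argument for this lemma, simply citing [G-H] and calling it standard; your mapping-cone reduction followed by the $\hbar$-adic successive-approximation argument is precisely the standard proof that such a citation stands in for, and every step — the identification of the reduced cone, the use of $\hbar$-torsion-freeness to divide by $\hbar$ and to conclude $dc_{i+1}=0$, and the use of completeness and separatedness to sum the primitives — is sound.
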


We remark that in this paper we have always worked with real
coefficient $\mathbb R$. Theorem \ref{thm:coh-ug} is true for a
general field $\mathbb K$ with characteristic 0.

\subsection{Examples of linear Poisson structures}
In this section, we provide a large class of linear Poisson
structures coming from invariant Lie algebra structures.

We assume that $\mathfrak g$ be a Lie algebra and $\Gamma$ be a
finite group acting on $\mathfrak g$ preserving its Lie bracket.
We choose a $\Gamma$ invariant metric on $\mathfrak g$. Let $V$ be
the dual of $\mathfrak g$ with the linear Poisson structure $\pi$
from the Lie bracket. Accordingly,  $\Gamma$ acts on $V$ preserving
the Poisson structure $\pi$.

For any $\gamma\in \Gamma$ with $l(\gamma)=2$, let $N^\gamma$ be the
subspace of $V$ normal to $V^\gamma$. As $V=V^\gamma\oplus N^\gamma$
and $V^*={V^\gamma}^*\oplus {N^\gamma}^*$. One can decompose
$\pi=V^*\otimes \wedge ^2 V =({V^\gamma}^*\oplus
{N^\gamma}^*)\otimes (N^\gamma\wedge N^\gamma\oplus N^\gamma\otimes
V^\gamma \oplus V^\gamma\otimes N^\gamma\oplus V^\gamma\wedge
V^\gamma)$. We define $\pi_\gamma$ to be the projection of $\pi$
onto the component $(V^\gamma)^*\otimes \wedge^2 N^\gamma$.

\begin{proposition}\label{prop:ex-linear}The collection $\Pi=\pi+\sum_{\gamma\in l(\gamma)=2}c_\gamma
\pi_\gamma$ with constant $c_\gamma$ satisfying
$c_{\alpha\gamma\alpha^{-1}}=c_\gamma$ for any $\alpha\in \Gamma$
defines a linear Poisson structure on $S(V^*)\rtimes \Gamma$.
\end{proposition}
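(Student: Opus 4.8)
The plan is to verify the defining condition $[\Pi,\Pi]=0$ of a Poisson structure, where $[\,,\,]$ is the Gerstenhaber bracket on $H^2(S(V^*)\rtimes\Gamma, S(V^*)\rtimes\Gamma)$ described in Subsection~\ref{Gerstenhaber}. First I would record that each $\pi_\gamma$ is a legitimate cohomology class: since $\pi=\pi_0$ is $\Gamma$-invariant (the group acts by Lie algebra automorphisms) and the orthogonal projection onto $(V^\gamma)^*\otimes\wedge^2 N^\gamma$ is $Z(\gamma)$-equivariant, the element $\pi_\gamma$ lies in $((V^\gamma)^*\otimes\wedge^2 N^\gamma)^{Z(\gamma)}$, i.e. it is a linear cocycle. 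The hypothesis $c_{\alpha\gamma\alpha^{-1}}=c_\gamma$ then makes $\sum_\gamma c_\gamma\pi_\gamma$ fully $\Gamma$-invariant, so $\Pi$ is a genuine $2$-cochain of the right type. It remains to compute $[\Pi,\Pi]$, and I would organize this by the codimension $l(\gamma)$ of the group element carrying each component.

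Expanding bilinearly and using that the Gerstenhaber bracket is symmetric on cohomological degree $2$, one has $[\Pi,\Pi]=[\pi_0,\pi_0]+2\sum_\gamma c_\gamma[\pi_0,\pi_\gamma]+\sum_{\alpha,\beta}c_\alpha c_\beta[\pi_\alpha,\pi_\beta]$. By the degree rule of Subsection~\ref{Gerstenhaber} (the $\gamma$-component of $[f_\alpha,g_\beta]$ vanishes unless $l(\gamma)=l(\alpha)+l(\beta)$), the term $[\pi_0,\pi_0]$ contributes only at the identity, where it equals the Schouten square of the Lie--Poisson bivector and hence vanishes by the Jacobi identity of $\mathfrak g$ (cf. Lemma~\ref{lem:identity}). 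The terms $[\pi_\alpha,\pi_\beta]$ with $l(\alpha)=l(\beta)=2$ can contribute only at $\gamma=\alpha\beta$ with $l(\gamma)=4$; but the corresponding degree-$3$ group $\big(S((V^\gamma)^*)\otimes\wedge^{3-l(\gamma)}V^\gamma\otimes\wedge^{l(\gamma)}N^\gamma\big)^{Z(\gamma)}$ contains the factor $\wedge^{3-4}V^\gamma=0$, so these brackets are automatically zero in cohomology. Consequently the only possibly nonzero part of $[\Pi,\Pi]$ sits in codimension $2$, where it equals $2c_\gamma[\pi_0,\pi_\gamma]_\gamma$ for each $\gamma$ with $l(\gamma)=2$.

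The heart of the proof is therefore to show $[\pi_0,\pi_\gamma]_\gamma=0$ for $l(\gamma)=2$, and I expect this to be the main obstacle. Since the identity commutes with every element, Subsection~\ref{Gerstenhaber} identifies this class with $\textsf{pr}_\gamma\circ\{\pi_0,\pi_\gamma\}$, the projection of an ordinary Schouten--Nijenhuis bracket of two linear bivector fields onto $S((V^\gamma)^*)\otimes V^\gamma\otimes\wedge^2 N^\gamma$. To evaluate it I would complexify $N^\gamma$ via the trick~(\ref{eq:complex}) and diagonalize $\gamma$: write $\mathfrak g=V^*$ with a basis consisting of a basis $(u_i)$ of $(V^\gamma)^*=\mathfrak g^\gamma$ together with $\gamma$-eigenvectors $w,\bar w$ spanning $(N^\gamma)^*$, of eigenvalues $\lambda,\lambda^{-1}$ with $\lambda\neq1$. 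Because $\gamma$ is a Lie algebra automorphism, the structure constants $c^e_{ab}$ are weight-additive, so only finitely many survive. A direct expansion of the Schouten bracket then shows that the coefficient of $u_k\,\partial_{u_j}\wedge\partial_w\wedge\partial_{\bar w}$ in $\textsf{pr}_\gamma\{\pi_0,\pi_\gamma\}$ equals $\sum_i c^{u_k}_{u_iu_j}c^{u_i}_{w\bar w}+c^{u_k}_{w\bar w}\big(c^{w}_{u_jw}+c^{\bar w}_{u_j\bar w}\big)$.

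The final step is to recognize this expression as a Jacobi identity. Writing the Jacobi identity of $\mathfrak g$ for the triple $(u_j,w,\bar w)$ and extracting its $\mathfrak g^\gamma$-component (the coefficient of $u_k$), the weight constraints force the intermediate index in the three cyclic terms to be $w$, then $u_i$, then $\bar w$ respectively; after using the antisymmetry $c^e_{ab}=-c^e_{ba}$ this becomes precisely the vanishing of the displayed coefficient. Hence $\textsf{pr}_\gamma\{\pi_0,\pi_\gamma\}=0$, so $[\pi_0,\pi_\gamma]_\gamma=0$, and combining the three codimension cases gives $[\Pi,\Pi]=0$. Thus $\Pi$ is a linear Poisson structure on $S(V^*)\rtimes\Gamma$.
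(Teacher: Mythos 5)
Your proof is correct, but your treatment of the crucial term $[\pi_0,\pi_\gamma]_\gamma$ takes a genuinely different route from the paper. You agree with the paper on the easy parts: the invariance bookkeeping, the vanishing of $[\pi_0,\pi_0]$ by Jacobi, and the observation that $[\pi_\alpha,\pi_\beta]$ for $l(\alpha)=l(\beta)=2$ can only live at components with $l(\gamma)=4$, where the degree-$3$ cohomology group contains the factor $\wedge^{3-4}V^\gamma=0$. For the remaining term, the paper argues coordinate-freely: it decomposes $\pi$ under $V=V^\gamma\oplus N^\gamma$ into the four $\gamma$-invariant pieces $\pi_{111},\pi_{122},\pi_{221},\pi_{222}$, records which summand of $V^*\otimes\wedge^3V$ each pairwise Schouten bracket lands in, and then extracts from $[\pi,\pi]=0$ together with $\wedge^3N^\gamma=0$ the component identities that force $[\pi,\pi_{122}]=[\pi,\pi_\gamma]=0$ as a full Schouten bracket — strictly more than the cohomological vanishing, which is useful later in the PBW argument. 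You instead complexify, diagonalize $\gamma$ on $N^\gamma$, and compute the single surviving coefficient of $\textsf{pr}_\gamma\{\pi_0,\pi_\gamma\}$ explicitly, identifying it with the $\mathfrak g^\gamma$-component of the Jacobiator of $(u_j,w,\bar w)$; I checked that your coefficient $\sum_i c^{u_k}_{u_iu_j}c^{u_i}_{w\bar w}+c^{u_k}_{w\bar w}(c^w_{u_jw}+c^{\bar w}_{u_j\bar w})$ is indeed the $u_k$-coefficient of $[[u_j,w],\bar w]+[[w,\bar w],u_j]+[[\bar w,u_j],w]$ once weight-additivity of the structure constants is invoked, so the step is sound (and still works when $\lambda=-1$, where the two normal weights coincide, since the extra cross terms are killed by $[w,w]=[\bar w,\bar w]=0$). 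The trade-off: your computation is more concrete and makes visible exactly which Jacobi identities are being used, but it only establishes the projected (cohomological) vanishing and requires the complexification trick (\ref{eq:complex}); the paper's decomposition argument is basis-free, stays over $\mathbb R$, and yields the stronger on-the-nose vanishing of the Schouten bracket. Both are complete proofs of the proposition as stated.
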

\begin{proof}
As is explained in subsection \ref{Koszul}, there is no $Z(\gamma)$
invariant section in $S({V^\gamma}^*)\otimes \wedge ^\bullet
V^\gamma\otimes \wedge ^{l(\gamma)}N^\gamma$ with odd $l(\gamma)$.
Furthermore, by Equation (\ref{N-P-P-T}), any tri-vector field on a
$\gamma$-component with $l(\gamma)=4$ is a trivial cocycle. This
implies that
\[
[\pi_\alpha, \pi_\beta]=0,
\]
if neither $\alpha$ nor $\beta$ is the identity of $\Gamma$.

To prove that $\Pi$ is a Poisson structure, it is sufficient to
prove
\[
[\pi, \pi_\gamma]=0,\ \gamma\in \Gamma.
\]
When $\gamma$ is identity, the above equation is from the fact that
$\pi$ is the Lie Poisson structure.

When $\gamma\ne 0$, we can decompose $\pi$ according to the
decomposition $V=V^\gamma \oplus N^\gamma$, $V^*={V^\gamma}^*\oplus
{N^\gamma}^*$. As $\pi\in V^*\otimes V\wedge V $ is $\gamma$
invariant,  $\pi$ is a sum of the following terms
\[
\begin{split}
\pi_{111}\in {V^\gamma}^*\otimes V^\gamma\wedge
V^\gamma,\qquad&\pi_{122}\in {V^\gamma}^*\otimes N^\gamma\wedge
N^\gamma,\\
\pi_{221}\in {N^\gamma}^*\otimes N^\gamma\wedge V^\gamma,\qquad
&\pi_{222}\in {N^\gamma}^*\otimes N^\gamma\wedge N^\gamma.
\end{split}
\]

We compute
\[
\begin{split}
[\pi_{111}, \pi_{111}]\in {V^\gamma}^*\otimes V^\gamma\wedge
V^\gamma\wedge V^\gamma,\quad& [\pi_{111}, \pi_{122}]\in
{V^\gamma}^*\otimes
V^\gamma\wedge N^\gamma \wedge N^\gamma, \\
[\pi_{111}, \pi_{221}]\in {N^\gamma}^*\otimes N^\gamma\wedge
V^\gamma\wedge
V^\gamma,\quad& [\pi_{111}, \pi_{222}]=0,\\
[\pi_{122}, \pi_{122}]=0,\quad& [\pi_{122}, \pi_{222}]\in
{V^\gamma}^*\otimes N^\gamma\wedge N^\gamma\wedge N^\gamma,\\
[\pi_{122}, \pi_{221}]\in (V^\gamma)^*\otimes N^\gamma\wedge
N^\gamma \wedge V^\gamma\oplus &{N^\gamma}^*\otimes N^\gamma\wedge
N^\gamma\wedge N^\gamma,\\
[\pi_{221}, \pi_{221}]\in {N^\gamma}^*\otimes N^\gamma\wedge
V^\gamma\wedge V^\gamma,\quad& [\pi_{221}, \pi_{222}]\in
{N^\gamma}^*\otimes N^\gamma \wedge N^\gamma \wedge V^\gamma,\\
[\pi_{222}, \pi_{222}]\in {N^\gamma}^*\otimes N^\gamma\wedge
N^\gamma\wedge N^\gamma& .
\end{split}
\]
According to the fact that $[\pi, \pi]=0$ and $N^\gamma\wedge
N^\gamma\wedge N^\gamma=0$ for $l(\gamma)=2$, we see
\[
\begin{split}
[\pi_{111}, \pi_{111}]&=0,\\
[\pi_{111},\pi_{221}]+[\pi_{221}, \pi_{221}]&=0,\\
[\pi_{111}+\pi_{221}, \pi_{122}]&=0,\\
[\pi_{122}, \pi_{222}]&=0,\\
[\pi_{122}, \pi_{122}]&=0,\\
[\pi_{221}, \pi_{221}]&=0.
\end{split}
\]

In particular, this implies $[\pi, \pi_{122}]=0$. Noticing that
$\pi_\gamma=\pi_{122}$, we conclude that the Schouten bracket of
$[\pi, \pi_\gamma]=0$.
\end{proof}

In the following, we construct explicit examples of linear Poisson
structures using Proposition \ref{prop:ex-linear} on $\mathbb{R}^3$
with the $\mathbb{Z}/2\mathbb{Z}=\{1, e\}$ action by $e(x,y,z)=(-x,
-y, z)$. In this case, the fixed point subspace of $e$ is
$\{x=y=0\}$, which is of codimension 2.

\begin{ex}\label{ex:linear-1}
Denote $\pi_1=z\partial_x\wedge\partial_y$. One can easily check
that $[\pi_1, \pi_1]=0$. And the Poisson structure constructed from
Proposition \ref{prop:ex-linear} is $\Pi_1=\pi_1+\pi_1|_{x=y=0}
e=z\partial_x\wedge \partial_y+z\partial_x\wedge
\partial_y|_{x=y=0}e$.
\end{ex}

\begin{ex}\label{ex:linear-2}
Denote $\pi_2=z\partial_x\wedge \partial_y+x\partial_x\wedge
\partial_z-y\partial_y\wedge \partial_z$. Noticing that $[\partial_x\wedge \partial_y,
x\partial_x-y\partial_y]=0$, we have $[\pi_2, \pi_2]=0$. And the
Poisson structure constructed from Proposition \ref{prop:ex-linear}
is $\Pi_2=\pi_2+\pi_2|_{x=y=0}e=z\partial_x\wedge
\partial_y+x\partial_x\wedge\partial_z-y\partial_y\wedge \partial_z+z\partial_x\wedge
\partial_y|_e$.
\end{ex}

We compute the 0-th Poisson cohomology of $\Pi_1$ and $\Pi_2$ to
distinguish them.
\begin{enumerate}
\item If $f\in (S\mathbb{R}^3)^{\mathbb {Z}/2\mathbb {Z}}$ is a 0-th Poisson cocycle with respect to $\Pi_1$, i.e.
$\Pi_1(f)=0$, then $[\pi_1, f]=0$ i.e.
$\partial_x(f)=\partial_y(f)=0$, which is also a sufficient
condition. Therefore, $H_{\Pi_1}^0=\{f\in (S\mathbb{R}^3)^{\mathbb
{Z}/2\mathbb {Z}}:
\partial_x(f)=\partial_y(f)=0\}$, which means $f$ is a polynomial
depending only on $z$.
\item If $f\in (S\mathbb{R}^3)^{\mathbb {Z}/2\mathbb {Z}}$ is a 0-th
Poisson cocycle with respect to $\Pi_2$, i.e. $\Pi_2(f)=0$, then
$\pi_2(f)=0$ and $\pi_1(f)|_{x=y=0}=0$. This is equivalent to
$zf_x+yf_z=0, zf_y+xf_z=0, xf_x-yf_y=0$, and $
f_x|_{x=y=0}=f_y|_{x=y=0}=0$. If we write
$f=\sum_{k,m,n}c_{kmn}x^ky^mz^n$, we have that
\[
\begin{split}
(k+1)c_{k+1mn-1}&+(n+1)c_{km-1n+1}=0\\
(m+1)c_{km+1n-1}&+(n+1)c_{k-1mn+1}=0\\
kc_{kmn}-mc_{kmn}&=0\\
c_{10n}=c_{01n}&=0.
\end{split}
\]
From the above third equation,  $c_{kmn}=0$ if $k\ne m$. From the
above first and second equation,
$(k+1)c_{k+1k+1n}+(n+2)c_{kkn+2}=0$. From this we can quickly
conclude that $f$ is a polynomial of $xy-1/2 z^2$, and $H^0_{\Pi_2}$
consists of polynomials on $xy-1/2 z^2$.
\end{enumerate}

We notice that in the above two examples the 0-th Poisson cohomology
of $\Pi_1$ (and $\Pi_2$) is isomorphic to the 0-th Poisson
cohomology of the restriction $\pi_1$ (and $\pi_2$) of $\Pi_1$ (and
$\Pi_2$) to the identity component of $\mathbb Z_2$. In the
following proposition, we prove that this is a general phenomena.
\begin{proposition}
\label{prop:poisson-coh}If the group $\Gamma$ is abelian, the
Poisson cohomology of $\Pi$ is isomorphic to the Poisson cohomology
of $\pi$.
\end{proposition}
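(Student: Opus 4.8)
The plan is to imitate the spectral-sequence argument of Proposition \ref{prop:Poisson-constant}. First I would filter $H^\bullet(S(V^*)\rtimes\Gamma, S(V^*)\rtimes\Gamma)$ by the grading $l$ that assigns to the $\gamma$-component $\big(S({V^\gamma}^*)\otimes\wedge^{\bullet-l(\gamma)}V^\gamma\otimes\wedge^{l(\gamma)}N^\gamma\big)^\Gamma$ the degree $l(\gamma)$ (here $Z(\gamma)=\Gamma$ since $\Gamma$ is abelian). Writing $\Pi=\pi_0+\sum_{l(\gamma)=2}c_\gamma\pi_\gamma=\pi+\sum_\gamma c_\gamma\pi_\gamma$, the bracket formulas of Subsection \ref{Gerstenhaber} show that bracketing with the identity component $\pi_0$ preserves $l$, whereas bracketing with a codimension-$2$ component $\pi_\gamma$ raises $l$ by $2$; hence, exactly as in \cite{H-T}[Thm.~3.4], the Poisson differential $d_\Pi=[\Pi,-]$ is filtered and its associated graded differential is precisely $d_\pi=[\pi_0,-]$. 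Consequently the $E_1$-page is the Poisson cohomology of the identity component, $E_1=H_\pi(S(V^*)\rtimes\Gamma)$, which is itself graded by the sector index $\gamma$ because $[\pi_0,-]$ preserves that index.

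Convergence of this spectral sequence yields an associated-graded isomorphism of $H_\Pi$ with a subquotient of $H_\pi$, so the proposition reduces to showing that the spectral sequence degenerates at $E_1$, i.e. that every higher differential vanishes. The first such differential $d_1\colon E_1^{(\delta)}\to E_1^{(\gamma\delta)}$ is induced by $\sum_{l(\gamma)=2}c_\gamma[\pi_\gamma,-]$; on a class $[g_\delta]$ represented by a $d_\pi$-cocycle it sends $[g_\delta]\mapsto\big[\sum_\gamma c_\gamma\,\textsf{pr}_{\gamma\delta}\{\pi_\gamma,g_\delta\}\big]$ by the abelian bracket formula of Subsection \ref{Gerstenhaber}. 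That these representatives are again $d_\pi$-cocycles is automatic from $[\pi_0,\pi_\gamma]=0$ (Proposition \ref{prop:ex-linear}) together with $d_\pi g_\delta=0$; the real content is that they must be $d_\pi$-coboundaries. Here the abelian hypothesis enters essentially: it forces $Z(\gamma)=\Gamma$, so the filtration splits into single-element sectors with no conjugation sums, and, through Lemma \ref{lem:bracket}, it guarantees $\llbracket\pi_\alpha,\pi_\beta\rrbracket=0$, so no cross-terms between distinct twisted sectors can survive to obstruct the collapse.

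I would establish the coboundary statement by the degree/support argument of Step III of Theorem \ref{thm:quant-liniear}: assigning degree $+1$ to $V^*$ and $-1$ to $V$, the brackets $\textsf{pr}_{\gamma\delta}\{\pi_\gamma,g_\delta\}$ land in the sector $\gamma\delta$ of strictly larger codimension $l(\gamma)+l(\delta)$ and in a range of internal degrees where, comparing $\deg(d_\pi)$ against the degrees available in $S(V^*)\otimes\wedge^\bullet V$ and using Equation (\ref{N-P-P-T}), the nontrivial cocycles are forced either to vanish or to be coboundaries. \emph{The hard part will be controlling all the higher differentials $d_r$ simultaneously}, not merely $d_1$. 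I expect the cleanest route is to package the vanishing into a single deformation-theoretic statement: the family $\Pi_t=\pi_0+t\sum_\gamma c_\gamma\pi_\gamma$ consists of Poisson structures for every $t$ (since $[\pi_0,\pi_0]=[\pi_0,\pi_\gamma]=[\pi_\gamma,\pi_\delta]=0$ by Proposition \ref{prop:ex-linear} and Lemma \ref{lem:bracket}), so $H_{\Pi_t}$ is the cohomology of a flat one-parameter family specializing to $H_\pi$ at $t=0$; combined with the filtered degree bounds above, this should pin the ranks to be constant in $t$, force degeneration at $E_1$, and yield $H_\Pi\cong H_\pi$.
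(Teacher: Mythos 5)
Your reduction to the spectral sequence of the filtration by $l(\gamma)$ is the same starting point as the paper's (and as Proposition \ref{prop:Poisson-constant}), but both devices you invoke to force degeneration at $E_1$ have genuine gaps. For $d_1$: the degree/support argument you import from Step III of Theorem \ref{thm:quant-liniear} works there only because the classes in question are \emph{constant} tri-vectors in $\wedge^3V$ at sectors of codimension $\geq 4$, hence of degree $-3$, while coboundaries of $\partial^\gamma$ have degree $\geq -2$; a general $d_\pi$-cocycle $g_\delta$ has arbitrary polynomial and exterior degree, and $\textsf{pr}_{\gamma\delta}\{\pi_\gamma,g_\delta\}$ is subject to no such numerical obstruction, so you have not shown it is a $d_\pi$-coboundary. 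For the higher differentials: cohomology of the family $\Pi_t=\pi_0+t\sum_\gamma c_\gamma\pi_\gamma$ is only upper semicontinuous in $t$ (gradewise, where the pieces are finite dimensional), so $H_{\Pi_0}=H_\pi$ yields at best $\dim H_{\Pi_t}\leq \dim H_\pi$ for generic $t$; it gives no constancy of ranks, says nothing about the specific value $t=1$, and the inequality it does provide is already automatic from $E_\infty$ being a subquotient of $E_1$. So neither step closes.

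The paper sidesteps all differential computations by constructing an explicit filtered chain map $\Psi$ from the $[\pi,\cdot]$-complex to the $[\Pi,\cdot]$-complex: $\Psi(X_\gamma)$ is a finite sum of iterated projections $\psi_{\gamma,\alpha_1,\dots,\alpha_k}(X_\gamma)$ of $X_\gamma$ onto the components $\wedge^{l(\gamma)}N^\gamma\otimes\wedge^2N^{\alpha_1}\otimes\cdots\otimes\wedge^2N^{\alpha_k}$ sitting at the deeper sector $\gamma\alpha_1\cdots\alpha_k$ (nonzero only when $l(\gamma\alpha_1\cdots\alpha_k)=l(\gamma)+2k$). The chain-map identity (\ref{eq:psi}) is proved by induction on $k$, and since the $k=0$ term is the identity, $\Psi$ induces the identity on $E_1$ of the two filtration spectral sequences; because the source differential preserves the sector grading, the comparison theorem then makes $\Psi$ a quasi-isomorphism, and degeneration of the target spectral sequence comes out as a \emph{consequence} rather than an input. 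To repair your argument you would need to supply exactly this correction-term data; proving degeneration directly is the hard part you yourself flagged, and the tools you propose do not deliver it.
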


\begin{proof}We construct a quasi-isomorphism
\[
\begin{split}
\Psi: \Big( (\bigoplus_{\gamma\in \Gamma} S({V^\gamma}^*)\otimes
\wedge^{\bullet-l(\gamma)}V^\gamma \otimes
\wedge^{l(\gamma)}N^\gamma)^\Gamma,\ & [\pi,\ ] \Big)
\longrightarrow
\\
&\Big( (\bigoplus_{\gamma\in \Gamma} S({V^\gamma}^*)\otimes
\wedge^{\bullet-l(\gamma)}V^\gamma \otimes
\wedge^{l(\gamma)}N^\gamma)^\Gamma,\ [\Pi,\ ] \Big).
\end{split}
\]

Given $X=\sum_\gamma X_\gamma\in (\bigoplus_{\gamma\in \Gamma}
S({V^\gamma}^*)\otimes \wedge^{\bullet-l(\gamma)}V^\gamma \otimes
\wedge^{l(\gamma)}N^\gamma)^\Gamma$, we define $\Psi(X)$ as a sum of
$\sum_\gamma \Psi(X_\gamma)$. Define $\Psi(X_\gamma)$ as a sum of
$1/k!\psi_{\gamma, \alpha_1, \cdots,\alpha_k}(X_\gamma)$ for $k=0,
1, 2,\cdots$. If $l(\gamma\alpha_1\cdots\alpha_k)\ne l(\gamma)+2k$,
then define $\psi_{\gamma, \alpha_1, \cdots, \alpha_k}(X_\gamma)=0$;
if $l(\gamma\alpha_1\cdots\alpha_k)=l(\gamma)+2k$, then we define
$\psi_{\gamma, \alpha_1, \cdots, \alpha_k}(X_\gamma)$ to be the
projection of $X_\gamma$ down to the component
\[
\Big(S({V^{\gamma\alpha_1\cdots\alpha_k}}^*)\otimes\wedge^\bullet
V^{\gamma\alpha_1\cdots\alpha_k} \otimes \wedge
^{l(\gamma)}N^\gamma\otimes \wedge^2 N^{\alpha_1}\otimes \cdots
\otimes \wedge^2 N^{\alpha_k}\Big)^\Gamma.
\]
We notice that since $l(\gamma\alpha_1\cdots\alpha_k)\leq \dim(V)$,
there are only a finite number of occasions that $\psi_{\gamma,
\alpha_1, \cdots, \alpha_k}$ is not zero. Therefore, the map
$\Psi(X_\gamma)$ is well-defined.

To prove that $\Psi$ is a chain map, we prove the following equation
for $\psi_{\gamma, \alpha_1, \cdots, \alpha_k}$ in
$H^\bullet(S(V^*)\rtimes \Gamma, S(V^*)\rtimes \Gamma)$ at component
$\gamma\alpha_1\cdots\alpha_k$,
\begin{equation}\label{eq:psi}
\sum_{\alpha_1\cdots \alpha_k=\delta}\psi_{\gamma, \alpha_1, \cdots,
\alpha_k}([\pi,
X_\gamma])=\sum_{\alpha_1\cdots\alpha_k=\delta}[\pi_{\alpha_k},
k\psi_{\gamma, \alpha_1, \cdots,\alpha_{k-1}}(X_\gamma)]+[\pi,
\psi_{\gamma, \alpha_1, \cdots, \alpha_{k-1}, \alpha_k}(X_\gamma)].
\end{equation}

Equation (\ref{eq:psi}) can be proved by induction. When $k=0$, the
identity is trivial. Assume that Equation (\ref{eq:psi}) is true for
$k=n$. We look at the case when $k=n+1$. Using the induction
assumption, we have
\[
\begin{split}
\sum_{\alpha_1\cdots\alpha_{n+1}=\delta}&\psi_{\gamma, \alpha_1,
\cdots, \alpha_{n+1}}([\pi,
X_\gamma])=\sum_{\alpha_1\cdots\alpha_{n+1}=\delta}\psi_{\gamma,
\alpha_1, \cdots, \alpha_{n+1}}(\psi_{\gamma, \alpha_1, \cdots,
\alpha_{n}}([\pi,
X_\gamma]))\\
=&\sum_{\alpha_1\cdots\alpha_{n+1}=\delta}\psi_{\gamma, \alpha_1,
\cdots, \alpha_{n+1}}\Big(n[\pi_{\alpha_n}, \psi_{\gamma, \alpha_1,
\cdots,\alpha_{n-1}}(X_\gamma)]+[\pi, \psi_{\gamma, \alpha_1,
\cdots, \alpha_n}(X_\gamma)]\Big).
\end{split}
\]

Looking at the contribution of $[\pi, \psi_{\gamma, \alpha_1,
\cdots, \alpha_n}(X_\gamma)]$ at $\gamma\alpha_1\cdots\alpha_{n+1}$,
we decompose $\psi_{\gamma, \alpha_1, \cdots, \alpha_n}(X_\gamma)$
and $\pi$ according to $V=V^{\alpha_{n+1}}\oplus N^{\alpha_{n+1}}$.
As both $\pi$ and $\psi_{\gamma, \alpha_1, \cdots,
\alpha_n}(X_\gamma)$ are $\alpha_{n+1}$ invariant, we can write
$\pi=\pi_0+\pi_1+\pi_2$ and $\psi_{\gamma, \alpha_1, \cdots,
\alpha_n}(X_\gamma)=X_0+X_1+X_2$ such that $\pi_i, X_i$ contain $i$
number of vector fields along $N^{\alpha_{n+1}}$. It is easy to see
that $[\pi_0+\pi_1, X_0+X_1]$ will contribute zero at
$\gamma\alpha_1\cdots\alpha_{n+1}$ by the facts that $\pi$ is a
linear Poisson structure and $[\pi_0+\pi_1, X_0+X_1]$ at
$\gamma\alpha_1\cdots\alpha_{n+1}$ is the restriction to
$V^{\gamma\alpha_1\cdots\alpha_{n+1}}$ of the component
\[
S({V^{\gamma\alpha_1\cdots\alpha_{n+1}}}^*)\otimes\wedge^\bullet
V^{\gamma\alpha_1\cdots\alpha_{n+1}} \otimes \wedge
^{l(\gamma)}N^\gamma\otimes \wedge^2 N^{\alpha_1}\otimes \cdots
\otimes \wedge^2 N^{\alpha_{n+1}}
\]
in $[\pi_0+\pi_1, X_0+X_1]$. Furthermore, $[\pi_2, X_2]$ vanishes as
it is a 3 vector field normal to $V^{\alpha_{n+1}}$, which is of
codimension 2. Therefore,  $[\pi, \psi_{\gamma, \alpha_1, \cdots,
\alpha_n}(X_\gamma)]$ at $\gamma\alpha_1\cdots\alpha_{n+1}$ is equal
to
\[
[\pi, X_2]+[\pi_2, X]=[\pi, \psi_{\gamma, \alpha_1, \cdots,
\alpha_{n+1}}(X_\gamma)]+[\pi_{\alpha_{n+1}}, \psi_{\gamma,
\alpha_1, \cdots, \alpha_{n}}(X_\gamma)].
\]
Next, we observe that $\psi_{\gamma, \alpha_1, \cdots,
\alpha_{n+1}}([\pi_{\alpha_n}, \psi_{\gamma, \alpha_1,
\cdots,\alpha_{n-1}}(X_\gamma)])$ is equal to $[\pi_{\alpha_n},
\psi_{\gamma, \alpha_1, \cdots,\alpha_{n-1},
\alpha_{n+1}}(X_\gamma)]$ as the space $N^{\alpha_n}$ and
$N^{\alpha_{n+1}}$ are orthogonal to each other by the assumption
that $l(\gamma)+2n+2=l(\gamma\alpha_1\cdots\alpha_{n+1})$. Hence, we
have the above Equation (\ref{eq:psi}) for $k=n+1$.

Using Equation (\ref{eq:psi}), we can easily check that $\Psi$ is a
chain map. To prove that $\Psi$ is a quasi-isomorphism, we look at
the filtration with respect to the grading $l(\gamma)$ as is used in
the proof of Proposition \ref{prop:Poisson-constant}. It is straight
forward to see that the induced chain map of $\Psi$ is identity at
$E_1$ as for $k=0$, $\psi_\gamma(X_\gamma)=X_\gamma$, which implies
that $\Psi$ is a quasi-isomorphism.
\end{proof}
\section{Examples of quantization}

In this section, we study some examples of quantization of
constant Poisson structures. We look at
$\mathbb{Z}_n=\mathbb{Z}/n\mathbb{Z}$ action on $\mathbb{R}^2$ by
the rotation
\[
\gamma: (x,y)\mapsto (\cos(\frac{2\pi}{n})x-\sin(\frac{2\pi}{n})y,
\sin(\frac{2\pi}{n})x+\cos(\frac{2\pi}{n})y),\ \gamma^n=1,
\]
where $\gamma$ is the generator of $\mathbb{Z}/n\mathbb{Z}$. If we
introduce complex coordinates $z=x+iy, \bar{z}=x-iy$, then the
above action is diagonalized
\[
\gamma:(z,\bar{z})\mapsto (\exp(\frac{2\pi i}{n})z,
\exp(-\frac{2\pi i}{n})\bar{z}).
\]

We study the Poisson structure of the following form
$\pi:\wedge^2\mathbb R^2 \to \mathbb R \mathbb{Z}_n$ by
$\pi(x,y)=\gamma$. In complex coordinates
$\pi(z,\bar{z})=-i/2\gamma$. By \cite{H-T}[Corollary 4.2], $\pi$
defines a noncommutative structure on $S(\mathbb{R}^2)\rtimes
\mathbb{Z}_n$, and by Proposition \ref{prop:Poisson-constant}, $\pi$
can be quantized. In the following two sections, we study properties
of quantization of $\pi$.

\subsection{A Moyal type formula}
In this subsection, we provide an explicit formula for quantization
of $\pi$, which is a generalization of Moyal product. We would like
to point out that many of the following formulas appear already in
Nadaud's paper \cite{N}. We prove that in the case of finite group,
this product is convergent. We start with introducing several
operators on $S(\mathbb{R}^2)$. We work with complex coordinates
$z=x+iy,\ \bar{z}=x-iy$.

Define $D_{z}, D_{\bar{y}}:S(\mathbb{R}^2)\to S(\mathbb{R}^2)$ as
\begin{eqnarray*}
D_{z}(f)=\frac{f(z,\bar{z})-f(e^{\frac{2\pi
i}{n}}z,\bar{z})}{(1-e^{\frac{2\pi i}{n}})z}\ \ \ \ \ &
D_{\bar{z}}(f)=\frac{f(z,\bar{z})-f(z,e^{-\frac{2\pi
i}{n}}\bar{z})}{(1-e^{-\frac{2\pi i}{n}})\bar{z}}.
\end{eqnarray*}

Define $\sigma_z, \sigma_{\bar{z}}:S(\mathbb{R}^2)\to
S(\mathbb{R}^2)$ as
\begin{eqnarray*}
\sigma_z(f)(z,\bar{z})=f(e^{\frac{2\pi i}{n}}z,\bar{z})\ \ \ \
&\sigma_{\bar{z}}(f)(z,\bar{z})=f(z,e^{-\frac{2\pi i}{n}}\bar{z}).
\end{eqnarray*}

Let $q=\exp(2\pi i/n)$. Define $[k]_q=1+q+\cdots +q^{k-1}$. Define
the following star product $\star$ on $S(\mathbb{R}^2)\rtimes
\mathbb{Z}_n$.

Define $f\gamma^k\star g\gamma^l$ by
\[
f\gamma^k\star g\gamma^l=\sum_{j=0}^\infty
\frac{(\frac{i\hbar}{2})^j}{[j]_q!} (D_z)^j(f)(\sigma_z
D_{z})^j(\gamma^k(g))\gamma^{j+k+l}
\]

To prove the associativity of $\star$, we study properties of
$D_z,D_{\bar{z}}$.
\begin{lemme}\label{lem:D-x}
\[
D_z^k(fg)=\sum_{i=0}^k \frac{[k]_q!}{[k-i]_q! [i]_q!}D_z^i(f)
\sigma_z^i D_{\bar{z}}^{k-i}(g),\ \ \ \ k\geq0.
\]
\end{lemme}
\begin{proof}
We prove this by induction. When $k=0$, this identity is trivial.

Assume that this identity holds for $k$. For $k+1$, we compute
\begin{eqnarray*}
D_z^{k+1}(fg)&=&D_z(\sum_{i=0}^k \frac{[k]_q!}{[k-i]_q!
[i]_q!}D_z^i(f) \sigma_z^i D_z^{k-i}(g))\\
&=&\sum_{i=0}^k\frac{[k]_q!}{[k-i]_q! [i]_q!}
D_z(D_z^i(f)\sigma_z^iD_z^{k-i}(g))\\
&=&\sum_{i=0}^k(\frac{[k]_q!}{[k-i]_q!
[i]_q!}D_z^{i+1}(f)\sigma_z^{i+1}D_z^{k-i}(g)+\frac{[k]_q!}{[k-i]_q!
[i]_q!}D_z^i(f)D_z
\sigma_z^i D_z^{k-i}(g))\\
&=&D_z^{k+1}(f)\sigma^{k+1}_z(g)+fD_z^{k+1}(g)\\
&+&\sum_{i=1}^k\Big(\frac{[k]_q!}{[k-i+1]_q!
[i-1]_q!}D_z^{i}(f)\sigma_z^iD_z^{k-i+1}(g)+\frac{[k]_q!}{[k-i]_q!
[i]_q!}q^i
D_z^i(f)\sigma_z^{i} D_z^{k-i+1}(g)\Big)\\
&=&D_z^{k+1}(f)\sigma^{k+1}_z(g)+\sum_{i=1}^k\frac{[k+1]_q!}{[k-i+1]_q!
[i]_q!} D_z^i(f)\sigma_z^{i} D_z^{k-i+1}(g)+fD_z^{k+1}(g)\\
&=&\sum_{i=0}^{k+1}\frac{[k+1]_q!}{[k-i+1]_q! [i]_q!}D_z^i(f)
\sigma_z^i D_z^{k+1-i}(g)).
\end{eqnarray*}
\end{proof}
We start to prove the associativity of $\star$.
\begin{eqnarray*}
&&(f\star g)\star h\\
&=&\sum_k(\frac{i\hbar}{2})^{k}\frac{1}{[k]_q!}D_z^k(f)\sigma^k_zD^{k}_{\bar{z}}(g)\gamma^{k}\star
h\\
&=&\sum_{k,l}(\frac{i\hbar}{2})^{k+l}\frac{1}{[l]_q![k]_q!}D_z^l\Big(D_z^k(f)\sigma^k_zD^{k}_{\bar{z}}(g)\Big)
\sigma_z^lD_{\bar{z}}^l(\gamma^{k}(h))\gamma^{k+l}
\end{eqnarray*}
Using Lemma \ref{lem:D-x}, we have that the above product is equal
to
\begin{eqnarray*}
&=&\sum_{k,l}(\frac{i\hbar}{2})^{k+l}\frac{1}{[l]_q![k]_q!}\frac{[l]_q!}{[l-m]_q![m]_q!}D_z^{m+k}(f)\sigma^m_z
D_z^{l-m}\sigma^k_z
D^k_{\bar{z}}(g)\sigma_z^lD_{\bar{z}}^l(\gamma^{k}(h))\gamma^{k+l}\\
&=&\sum_{s,t,k}(\frac{i\hbar}{2})^{s+t+l}\frac{1}{[k]_q![s]_q![t]_q!}D_z^{s+k}(f)q^{kt}\sigma^{s+k}_zD_z^tD^k_{\bar{z}}(g)
\sigma_z^{s+t}D_{\bar{z}}^{s+t}(\gamma^{k}(h))\gamma^{k+s+t}\\
&=&\sum_{s,t,k}(\frac{i\hbar}{2})^{s+t+l}\frac{1}{[k]_q![s]_q![t]_q!}D_z^{s+k}(f)q^{kt}\sigma^{s+k}_zD_z^tD^k_{\bar{z}}(g)
\sigma_{\bar{z}}^{s+t}D_{\bar{z}}^{s+t}\sigma_z^{k}\sigma_{\bar{z}}^k(h))\gamma^{k+s+t}\\
&=&\sum_{s,t,k}(\frac{i\hbar}{2})^{s+t+l}\frac{1}{[k]_q![s]_q![t]_q!}D_z^{s+k}(f)q^{kt}\sigma_z^{s+k}D_z^tD^k_{\bar{z}}(g)
\sigma_z^{s+t+k}\sigma^k_{\bar{z}}q^{-k(s+t)}D_{\bar{z}}^{s+t}(h)\gamma^{k+s+t}\\
\end{eqnarray*}
On the other hand, we compute
\begin{eqnarray*}
&&f\star(g\star h)\\
&=&\sum_k (\frac{i\hbar}{2})^k \frac{1}{[k]_q!}f\star
(D_z^k(g)\sigma_z^kD_{\bar{z}}^k(h)\gamma^{k}\\
&=&\sum_{k,l} (\frac{i\hbar}{2})^{k+l}
\frac{1}{[l]_q![k]_q!}D_z^l(f)\sigma_z^lD_{\bar{z}}^l\Big(D_z^k(g)\sigma_z^kD_{\bar{z}}^k(h)\Big)
\gamma^{k+l}
\end{eqnarray*}
Applying the similar formula for $D_z$ as Lemma \ref{lem:D-x}, we
have
\begin{eqnarray*}
&=&\sum_{k,s,t}(\frac{i\hbar}{2})^{k+s+t}\frac{1}{[k]_q![s+t]_q!}\frac{[s+t]_{q^{-1}}!}{[s]_{q^{-1}}![t]_{q^{-1}}!}
D_z^{s+t}(f)\sigma_z^{s+t}D_{\bar{z}}^sD_z^k(g)\sigma_z^{s+t}\sigma_{\bar{z}}^sD_{\bar{z}}^{t}\sigma_z^kD_{\bar{z}}^k(h)\gamma^{k+s+t}\\
&=&\sum_{k,s,t}(\frac{i\hbar}{2})^{k+s+t}\frac{1}{[k]_q![s+t]_q!}\frac{[s+t]_{q^{-1}}!}{[s]_{q^{-1}}![t]_{q^{-1}}!}
D_z^{s+t}(f)\sigma_z^{s+t}D_{\bar{z}}^sD_z^k(g)\sigma_z^{s+t+k}\sigma_{\bar{z}}^sD_{\bar{z}}^{t+k}(h)\gamma^{k+s+t}
\end{eqnarray*}
In the above equation, we make the change $s\mapsto k, t\mapsto s,
k\mapsto t$, then we have
\[
\sum_{k,s,t}(\frac{i\hbar}{2})^{k+s+t}\frac{1}{[t]_q![k+s]_q!}\frac{[k+s]_{q^{-1}}!}{[k]_{q^{-1}}![s]_{q^{-1}}!}
D_z^{k+s}(f)\sigma_z^{k+s}D_{\bar{z}}^kD_z^t(g)\sigma_z^{s+t+k}\sigma_{\bar{z}}^kD_{\bar{z}}^{s+t}(h)\gamma^{k+s+t}
\]
which is identified with the above expression of $(f\star g)\star
h$. We remark that $[k]_q=[k]_{q^{-1}}q^{k-1}$ and
$[k]_q!=[k]_{q^{-1}}!q^{(k-1)k/2}$. Therefore, we conclude that
$\star$ defines an associative deformation of $S(\mathbb R^2)\rtimes
\mathbb Z_n$, whose $\hbar$ component is equal to $i/2D_z\otimes
\sigma_zD_{\bar{z}}$, which is cohomologous to the Poisson structure
$\pi$.

We remark that our proof of associativity of $\star$ is slightly
different from \cite{N}. One can view $f\star g$ as an extension
of the Moyal product as follows
\[
f\star g=m\circ \exp_q(\frac{i\hbar}{2}(D_z\otimes \sigma_z
D_{\bar{z}}\otimes \gamma))(f\otimes g\otimes 1).
\]
Nadaud proved the associativity of $\star$ analogous to the
associativity of Moyal product by using the property of the
$q$-exponential. Here our proof is more straightforward, and it
leads to the following more precise formula for $\star$.

We have the following property for the operator $D_z$.
\begin{proposition}\label{prop:D-x}
\[
D_z^m(f)=\frac{\sum_{i=0}^m
(-1)^{m-i}\frac{[m]_q!}{[m-i]_q![i]_q!}q^{i(i-1)/2}\sigma_z^{m-i}f}{(1-q)^m
q^{m(m-1)/2}z^m}.
\]
In particular, when $m=n$,  $D_z^n(f)=0$ for any $f$. And this
implies that
\[
f\star
g=\sum_{j=0}^{n-1}\frac{(\frac{i\hbar}{2})^j}{[j]_q!}D_z^j(f)\sigma_z^jD_{\bar{z}}^j(g)\gamma^j.
\]
\end{proposition}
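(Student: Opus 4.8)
The plan is to establish the closed formula for $D_z^m$ by induction on $m$, and then to read off both $D_z^n=0$ and the truncation of $\star$ as consequences. Throughout I write $M_z$ for multiplication by $z$ and abbreviate the $q$-binomial coefficient $\binom{m}{i}_q:=[m]_q!/\bigl([i]_q![m-i]_q!\bigr)$. The two elementary facts I will use repeatedly are the operator identity $(1-q)M_zD_z=1-\sigma_z$, which is merely the definition of $D_z$ rewritten, and the scaling rule $\sigma_z(z^{-m}g)=q^{-m}z^{-m}\sigma_z(g)$, immediate from $(\sigma_z h)(z)=h(qz)$.

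For the induction it is convenient to set $P_m:=(1-q)^m q^{\binom{m}{2}}M_z^m D_z^m$, so that the asserted formula is exactly $P_m=\sum_{i=0}^m(-1)^{m-i}\binom{m}{i}_q q^{\binom{i}{2}}\sigma_z^{m-i}$; reindexing by $j=m-i$ this reads $P_m=\sum_{j=0}^m(-1)^j\binom{m}{j}_q q^{\binom{m-j}{2}}\sigma_z^{j}$. The base case $m=1$ is the definition of $D_z$. For the inductive step I first derive a recursion for the $P_m$: writing $D_z^{m+1}f=\tfrac{D_z^mf-\sigma_z(D_z^mf)}{(1-q)z}$ and substituting $D_z^m f=P_m f/\bigl((1-q)^m q^{\binom{m}{2}}z^m\bigr)$, the scaling rule produces a factor $q^{-m}$ on the $\sigma_z$-term, and collecting the $(1-q)$- and $q$-powers (using $\binom{m+1}{2}-\binom{m}{2}=m$) yields the clean recursion $P_{m+1}=q^mP_m-\sigma_z P_m$.

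It then remains to verify that the closed form satisfies this recursion. Applying $q^mP_m-\sigma_zP_m$ to $\sum_j(-1)^j\binom{m}{j}_q q^{\binom{m-j}{2}}\sigma_z^j$ and shifting the index in the $\sigma_zP_m$ term, the coefficient of $\sigma_z^j$ becomes $(-1)^j\bigl[\binom{m}{j}_q q^{\binom{m-j}{2}+m}+\binom{m}{j-1}_q q^{\binom{m-j+1}{2}}\bigr]$. The main (and only genuinely fiddly) point is the bookkeeping of the $q$-exponents: using the two arithmetic identities $\binom{m-j}{2}+m-\binom{m+1-j}{2}=j$ and $\binom{m-j+1}{2}=\binom{m+1-j}{2}$, this factors as $(-1)^j q^{\binom{m+1-j}{2}}\bigl[q^j\binom{m}{j}_q+\binom{m}{j-1}_q\bigr]$, and the bracket is precisely the $q$-Pascal identity $q^j\binom{m}{j}_q+\binom{m}{j-1}_q=\binom{m+1}{j}_q$. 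Hence $P_{m+1}=\sum_j(-1)^j\binom{m+1}{j}_q q^{\binom{m+1-j}{2}}\sigma_z^j$, completing the induction and proving the formula. (One could alternatively normal-order $\bigl[M_z^{-1}(1-\sigma_z)\bigr]^m$ via the $q$-binomial theorem for the pair $M_z,\sigma_z$, but the induction is self-contained.)

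For the specialization $m=n$ I use that $q=e^{2\pi i/n}$ is a primitive $n$-th root of unity, so $[n]_q=0$ and therefore $\binom{n}{i}_q=0$ for every $0<i<n$ (the factor $[n]_q$ occurs in the numerator but not in the denominator $[i]_q![n-i]_q!$). Thus only $i=0$ and $i=n$ survive in the numerator of $D_z^n f$. Since $\sigma_z^n=\mathrm{id}$ (because $q^n=1$), the $i=0$ term is $(-1)^n f$ and the $i=n$ term is $q^{\binom{n}{2}}f$; and $q^{\binom{n}{2}}=e^{\pi i(n-1)}=(-1)^{n-1}$, so the two cancel and $D_z^n f=0$ for all $f$. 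Finally $D_z^n=0$ forces $D_z^j=0$ for all $j\ge n$, so every term with $j\ge n$ in the series for $f\star g$ carries the vanishing operator $D_z^j$ and drops out, leaving exactly $\sum_{j=0}^{n-1}\tfrac{(i\hbar/2)^j}{[j]_q!}D_z^j(f)\sigma_z^jD_{\bar z}^j(g)\gamma^j$. The apparent $0/0$ from $[j]_q!=0$ when $j\ge n$ is harmless, since $\star$ is to be read as the convergent finite-group product and it is the nilpotency of $D_z$ that makes the tail genuinely absent. I expect the only real obstacle to be the $q$-exponent bookkeeping in the inductive step; the root-of-unity cancellation and the truncation are then immediate.
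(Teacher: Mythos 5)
Your proof is correct and follows essentially the same route as the paper's: induction on $m$ using the scaling rule $\sigma_z(z^{-m}g)=q^{-m}z^{-m}\sigma_z(g)$ and the $q$-Pascal identity, followed by the root-of-unity cancellation $q^{n(n-1)/2}+(-1)^n=0$ for the case $m=n$ and the resulting nilpotency $D_z^n=0$ to truncate the star product. Your reorganization via the operator recursion $P_{m+1}=q^mP_m-\sigma_zP_m$ is just a cleaner bookkeeping of the same computation that the paper carries out in one long chain of equalities.
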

\begin{proof}
We prove the identity by induction. When $m=1$, we have
\[
D_z(f)=\frac{f(z,\bar{z})-f(qz,
\bar{z})}{z-\gamma(z)}=\frac{f(z,\bar{z})-\sigma_z(f)(z,\bar{z})}{(1-q)z}.
\]
Assume that the above identity holds for $m$. Then for $m+1$,
\begin{eqnarray*}
D_z^{m+1}(f)&=&D_z(D_z^m(f))=D_z(\frac{\sum_{i=0}^m
(-1)^{m-i}\frac{[m]_q!}{[m-i]_q![i]_q!}q^{i(i-1)/2}\sigma_z^{m-i}f}{(1-q)^m
q^{m(m-1)/2}z^m})\\
&=&\sum_{i=0}^m(-1)^{m-i}\frac{[m]_q!}{[m-i]_q![i]_q!}\frac{q^{i(i-1)/2}}{(1-q)^m
q^{m(m-1)/2}}D_z(\frac{\sigma_z^{m-i}f}{z^m})\\
&=&\sum_{i=0}^m(-1)^{m-i}\frac{[m]_q!}{[m-i]_q![i]_q!}\frac{q^{i(i-1)/2}}{(1-q)^m
q^{m(m-1)/2}}\frac{\frac{\sigma_z^{m-i}f}{z^m}-\frac{\sigma_z^{m-i+1}(f)}{\gamma(z^m)}}{(z-\gamma(z))}\\
&=&\sum_{i=0}^m(-1)^{m-i}\frac{[m]_q!}{[m-i]_q![i]_q!}\frac{q^{i(i-1)/2}}{(1-q)^m
q^{m(m-1)/2}}\frac{q^m\sigma_z^{m-i}(f)-\sigma_z^{m-i+1}(f)}{q^m(1-q)z^{m+1}}\\
&=&\frac{1}{q^{m(m+1)/2}(1-q)^{m+1}z^{m+1}}\sum_{i=0}^m(-1)^{i}\frac{[m]_q!}{[m-i]_q![i]_q!}
q^{(m-i)(m-i-1)/2}(q^m\sigma_z^i(f)-\sigma_z^{i+1}(f))\\
&=&\frac{1}{q^{m(m+1)/2}(1-q)^{m+1}z^{m+1}}\Big(q^{m(m+1)/2}f+(-1)^{m+1}\sigma_z^{m+1}(f)+\\
&+&[m]_q!\sum_{i=1}^{m}(-1)^{i} (\frac{q^m
q^{(m-i)(m-i-1)/2}}{[m-i]_q![i]_q!}+\frac{q^{(m+1-i)(m-i)/2}}{[m-i+1]_q![i-1]_q!})\sigma_z^i(f)\Big)\\
&=&\frac{1}{q^{m(m+1)/2}(1-q)^{m+1}z^{m+1}}\Big(q^{m(m+1)/2}f+(-1)^{m+1}\sigma_z^{m+1}(f)+\\
&+&[m]_q!
\sum_{i=1}^m(-1)^i\frac{q^{(m+1-i)(m-i)/2}(q^i[m-i+1]_q!)+[i]_q!}{[m+1-i]_q![i]_q!}\sigma_z^i(f)\Big)\\
&=&\frac{1}{q^{m(m+1)/2}(1-q)^{m+1}z^{m+1}}\sum_{i=0}^{m+1}(-1)^iq^{(m+1-i)(m-i)/2}
\frac{[m+1]_q!}{[m+1-i]_q![i]_q!}\sigma_z^i(f).
\end{eqnarray*}
We have proved the identity of $D_z^m(f)$ by induction. To
conclude that $D_z^n(f)=0$. We see that by the above formula of
$D_z^n(f)$, as $[n]_q=0$,
\[
D_z^n(f)=\frac{1}{q^{n(n-1)/2}(1-q)^{n}z^n}(q^{(n-1)n/2}f+(-1)^n\sigma^n_z(f)).
\]
Since $\sigma_z^n(f)=f$, we have
\[
\frac{1}{q^{n(n-1)/2}(1-q)^{n}z^n}(q^{(n-1)n/2}+(-1)^n)(f).
\]
The statement follows from the identity $q^{(n-1)n/2}+(-1)^n=0.$
\end{proof}

We conclude from Proposition \ref{prop:D-x} that the star product
$\star$ on $S(\mathbb R^2)\rtimes \mathbb{Z}_n$ is convergent for
any value of $\hbar$.

In particular, when $n=2$, we have the following explicit formula of
a deformation on $S(\mathbb R^2)\rtimes \mathbb{Z}_2$
\[
f\star
g=fh+(\frac{i\hbar}{2})\frac{f(z,{\bar{z}})-f(-z,{\bar{z}})}{2z}\frac{f(-z,{\bar{z}})-f(-z,-{\bar{z}})}{2{\bar{z}}}.
\]

\begin{rk}
Here our formula of product uses ``normal ordering", by which we
mean that $D_z$ is contained only in the left component and
$D_{\bar{z}}$ is contained in the right component. We can also
define product with ``anti-normal ordering" or ``symmetric
ordering" as \cite{N}. The similar property like Proposition
\ref{prop:D-x} extends directly.
\end{rk}

\begin{rk}
We observe that the formula for the star product $\star$ on
$S(\mathbb R^2)\rtimes \mathbb{Z}_n[[\hbar]]$ works well for the
algebra $C^\infty_c(\mathbb R^2)\rtimes \mathbb{Z}_n[[\hbar]]$.
Again, $\star$ product is convergent for any two smooth functions
$f$ and $g$ on $\mathbb{R}^2$.
\end{rk}
\subsection{Deformation of singularity}
In this subsection, we compute the center of the above quantized
algebra $(S(\mathbb R^2)\rtimes \mathbb{Z}_n[[\hbar]], \star)$. We
prove that the center is closely connected to the deformation of the
underlying quotient space $\mathbb {R}^2/\mathbb{Z}_n$. We must say
that this kind of idea is already in \cite{E-G}. Here we are giving
concrete examples about this idea.

We write an element in $S(\mathbb R^2)\rtimes \mathbb{Z}_n[[\hbar]]$
by $\sum_{i=0}^{n-1}f_i\gamma^i$ with $f_i$ in $S(\mathbb
R^2)[[\hbar]]$.

\begin{proposition}\label{prop:center}
If $f=\sum_{i=0}^{n-1}f_i\gamma^i$ is in the center of $(S(\mathbb
R^2)\rtimes \mathbb{Z}_n[[\hbar]], \star)$, then $f$ is completely
determined by $f_0$ by the following formula
\[
\gamma(f_0)=f_0,\
f_j=(\frac{i\hbar}{2})^j\frac{D_z^j(f_0)}{[j]_q!(1-q^{-1})^j{\bar{z}}^j}=(-\frac{i\hbar}{2})^jq^{-j(j-1)/2}
\frac{\sigma_z^jD_{\bar{z}}^j(f_0)}{(1-q)^j[j]_q!z^j}, i=1,\cdots,
n-1.
\]
Therefore, as a vector space the center of the quantum algebra is
isomorphic to $S(\mathbb R^2)^{\mathbb{Z}_n}$, the algebra of
$\mathbb{Z}_n$ invariant polynomials on $V$.
\end{proposition}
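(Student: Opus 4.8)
The plan is to test centrality against the three algebra generators $\gamma$, $z$, $\bar z$ (using the complex coordinates $z,\bar z$ of this section), which topologically generate $(S(\mathbb R^2)\rtimes\mathbb Z_n[[\hbar]],\star)$ over $\mathbb R[[\hbar]]$. Since $\star$ is $\mathbb R[[\hbar]]$-bilinear and $\hbar$-adically continuous, $f=\sum_{i=0}^{n-1}f_i\gamma^i$ is central if and only if it $\star$-commutes with each of $\gamma$, $z$ and $\bar z$. I would handle these three commutation relations in turn, using throughout the truncated product formula of Proposition \ref{prop:D-x}.

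First I would impose commutation with $\gamma$. Because $D_z(1)=D_{\bar z}(1)=0$, the product formula collapses to $f\star\gamma=\sum_i f_i\gamma^{i+1}$ and $\gamma\star f=\sum_i \gamma(f_i)\gamma^{i+1}$, so this relation is equivalent to $\gamma(f_i)=f_i$ for all $i$; in particular $f_0\in S(\mathbb R^2)^{\mathbb Z_n}$. Next I would extract the recursion from commutation with $\bar z$ (and, symmetrically, with $z$). The essential simplification is that $\bar z$ is linear, so $D_z(\bar z)=0$, $D_{\bar z}(\bar z)=1$ and $D_{\bar z}^2(\bar z)=0$ force all sums to truncate at $j\le 1$. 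Computing $f\star\bar z$ and $\bar z\star f$ and comparing $\gamma^i$-components gives, for $i\not\equiv 0\ (\mathrm{mod}\ n)$, the relation $(1-q^{-i})\bar z f_i=\tfrac{i\hbar}{2}q^{1-i}D_z(f_{i-1})$. Using that $\sigma_z$ fixes $\bar z$, so that $D_z$ passes through division by powers of $\bar z$, I would iterate this recursion from $f_0$; collecting the $q$-factorials produced by the factors $1-q^{-i}$ yields the first closed formula $f_j=(\tfrac{i\hbar}{2})^j D_z^j(f_0)/\big([j]_q!(1-q^{-1})^j\bar z^j\big)$. Repeating the computation with $z$ in place of $\bar z$ produces the companion recursion and the second expression for $f_j$; their equality can be verified monomial by monomial from the explicit expansion of $D_z^m$ in Proposition \ref{prop:D-x}.

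The hard part will be the closure of the recursion around the cyclic group, together with polynomiality of the components. The recursion only determines $f_1,\dots,f_{n-1}$ from $f_0$; the remaining $\gamma^0$-component of $f\star\bar z=\bar z\star f$ demands the extra identity $D_z(f_{n-1})=0$. Since $f_{n-1}$ is a constant multiple of $D_z^{n-1}(f_0)/\bar z^{n-1}$ and $D_z$ commutes with $1/\bar z^{n-1}$, this identity reads exactly $D_z^n(f_0)=0$, which holds for every $f_0$ by Proposition \ref{prop:D-x}. This nilpotency of $D_z$ is the one genuinely non-formal input, and I expect it to be the crux of the argument.

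For the final isomorphism statement I would then argue in both directions. Injectivity of $f\mapsto f_0$ on the center is immediate from the closed formulas. For surjectivity I would check, conversely, that for any invariant $f_0$ the formulas output invariant components $f_j$ (because $\gamma D_z=q^{-1}D_z\gamma$ together with $\gamma(\bar z^{-j})=q^{j}\bar z^{-j}$) that are honest polynomials: on a monomial $z^a\bar z^b$ with $a\equiv b\ (\mathrm{mod}\ n)$, division by $\bar z^j$ is legitimate because whenever $b<j$ the coefficient $[a]_q[a-1]_q\cdots[a-j+1]_q$ contains the vanishing factor $[a-b]_q=[mn]_q=0$, so that monomial contributes $0$. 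Together with the cyclic-closure step above, this shows that $f\mapsto f_0$ is a linear bijection from the center of $(S(\mathbb R^2)\rtimes\mathbb Z_n[[\hbar]],\star)$ onto $S(\mathbb R^2)^{\mathbb Z_n}$, which is the assertion.
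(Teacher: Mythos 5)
Your proposal is correct, and for the sufficiency half it takes a genuinely different (and more elementary) route than the paper. The necessity direction is essentially the same: the paper also extracts the recursion by commuting $f$ with a linear coordinate (it uses $z$, getting $(q^j-1)zf_j=\tfrac{i\hbar}{2}\sigma_zD_{\bar z}(f_{j-1})$, where you use $\bar z$ and get the mirror relation), and it likewise begins by verifying that the two closed-form expressions for $f_j$ agree via the expansion of $D_z^m$ in Proposition \ref{prop:D-x}. The divergence is in proving that the $f$ so constructed is actually central: you reduce to the three generators $z,\bar z,\gamma$, whereas the paper verifies $f\star g=g\star f$ for an \emph{arbitrary} $g\in S(\mathbb R^2)$ by a direct computation resting on the $q$-binomial identity $\sum_{l=0}^{j}(-1)^{j-l}q^{(j-l)(j-l-1)/2}/([l]_q![j-l]_q!)=0$ for $j\neq 0$. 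The paper's own remark after the proof concedes that the generator check is a legitimate simplification; the reason it does the longer computation is that the generator argument does not survive the passage to $C^\infty_c(\mathbb R^2)\rtimes\mathbb Z_n[[\hbar]]$, where $z,\bar z,\gamma$ no longer generate. Your version, on the other hand, makes explicit two points the paper passes over in silence and which genuinely need checking: the closure of the recursion around the cyclic group (the $\gamma^0$-component forces $D_z(f_{n-1})=0$, i.e.\ $D_z^n(f_0)=0$, supplied by Proposition \ref{prop:D-x}), and the polynomiality of the $f_j$ (the divisibility by $\bar z^j$, which you correctly reduce to the vanishing of $[a-b]_q$ on invariant monomials $z^a\bar z^b$ with $a\equiv b \pmod n$). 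So each approach buys something: yours is shorter and more careful on the algebraic bookkeeping; the paper's computation generalizes to smooth functions.
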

\begin{proof}
We need to first prove the above two expressions for $f_i$ are
same. We prove this using Proposition \ref{prop:D-x}.
\begin{eqnarray*}
&&(-\frac{i\hbar}{2})^jq^{-j(j-1)/2}
\frac{\sigma_z^jD_{\bar{z}}^j(f_0)}{(1-q)^j[j]_q!z^j}\\
&=&\frac{(-\frac{i\hbar}{2})^jq^{-j(j-1)/2}}{(1-q)^j[j]_q!z^j}
\sigma_z^j\frac{\sum_{k=0}^j(-1)^{k}
\frac{[j]_{q^{-1}!}}{[j-k]_{q^{-1}}![k]_{q^{-1}}!}q^{-(j-k)(j-k-1)/2}
\sigma_{\bar{z}}^k(f_0)}{(1-q^{-1})^jq^{-j(j-1)/2}{\bar{z}}^j}\\
&=&\frac{(\frac{i\hbar}{2})^jq^{-j(j-1)/2}\sum_k
(-1)^{j-k}\frac{1}{[j-k]_q![j]_q!}q^{(j-k)(j-k-1)/2+k(k-1)/2-(j-k)(j-k-1)/2}
\sigma_z^j\sigma_{\bar{z}}^k(f_0)}{(1-q)^j(1-q^{-1})^jz^j{\bar{z}}^j}\\
&=&\frac{(\frac{i\hbar}{2})^jq^{-j(j-1)/2}}{[j]_q!(1-q^{-1})^j{\bar{z}}^j}
\frac{\sum_k(-1)^{j-k}\frac{[j]_q!}{[j-k]_q![k]_q!}q^{k(k-1)}\sigma_z^{j-k}f_0}{(1-q)^jz^j}
\end{eqnarray*}
where in the last line we have used that
$\sigma_z\sigma_{\bar{z}}(f_0)=\gamma(f_0)=f_0$. By Proposition
\ref{prop:D-x}, we conclude that the above expression is equal to
\[
\frac{(\frac{i\hbar}{2})^j}{[j]_q!(1-q^{-1})^j{\bar{z}}^j}D_z^j(f_0).
\]

Let $f=f_0+f_1\gamma+\cdots f_{n-1}\gamma^{n-1}$ be an element in
the center of $S(\mathbb R^2)\rtimes \mathbb Z_n$. We compute
$f\star z=\sum_{i}f_i\star
\gamma^i(z)\gamma^i=\sum_{i}q^if_iz\gamma^i$, and $z\star f=\sum_j
z\star f_j \gamma^j=(\sum_j zf_j
+\frac{i\hbar}{2}\sigma_zD_{\bar{z}}(f_j)\gamma)\gamma^j
=\sum_{j}(zf_j+\frac{i\hbar}{2}\sigma_zD_{\bar{z}}(f_{j-1})\gamma^j$.
As $f\star z=z\star f$,
$f_j=-\frac{i\hbar}{2}\sigma_zD_{\bar{z}}(f_{j-1})/[(1-q)[j]_q!z]$.
And we can solve by induction to find that $f_i$ has to be the form
expressed in the Proposition.

To prove that the above defined
$f=f_0+f_1\gamma+\cdots+f_{n-1}\gamma^{n-1}$ is in the center, we
show in the following $f\star g=g\star f$ for any $g\in S(\mathbb
R^2)$ and $\gamma(f)=f$.

For $\gamma(f)=f$, it is enough to prove that $\gamma(f_i)=f_i$.
This is obvious from the following identity
\[
f_j=\frac{(\frac{i\hbar}{2})^jq^{-j(j-1)/2}}{[j]_q!(1-q^{-1})^j{\bar{z}}^j}
\frac{\sum_k(-1)^{j-k}\frac{[j]_q!}{[j-k]_q![k]_q!}q^{k(k-1)}\sigma_z^{j-k}f_0}{(1-q)^jz^j}
\]
and the fact that $f_0$ is $\gamma$ invariant.

For $f\star g= g\star f$, we compute the two sides of the equation
separately.
\begin{eqnarray*}
f\star g&=&\sum_i f_i\gamma^i \star g=\sum_i f_i\star
\gamma^i(g)\gamma^i\\
&=& \sum_{i,j}
\frac{(\frac{i\hbar}{2})^j}{[j]_q!}D_z^j(f_i)\sigma_z^jD_z^j(\gamma^i(g))\gamma^{i+j}\\
&=&\sum_{j,k}\frac{(\frac{i\hbar}{2})^j}{[j]_q!}D_z^j\Big((\frac{i\hbar}{2})^k
\frac{D_z^k(f_0)}{[k]_q!(1-q^{-1})^k{\bar{z}}^k}\Big)\sigma_z^jD_{\bar{z}}^j(\gamma^k(g))\gamma^{j+k}\\
&=&\sum_{j,k}\frac{(\frac{i\hbar}{2})^{j+k}}{[k]_q![j]_q!}
\frac{D_z^{j+k}(f_0)\sigma_z^jD_{\bar{z}}^j(\gamma^j(g))\gamma^{j+k}}{(1-q^{-1})^k{\bar{z}}^k}\\
&=&\sum_{k}(\frac{i\hbar}{2})^{k}D_z^{k}(f_0)\sum_{j=0}^k
\frac{\sigma_z^{k-j}D_{\bar{z}}^{k-j}(\gamma^j(g))\gamma^{k}}{[j]_q![k-j]_q!(1-q^{-1})^j{\bar{z}}^j}.
\end{eqnarray*}
Applying Proposition \ref{prop:D-x} to the above
$D_{\bar{z}}^{k-j}$, we have that $f\star g$ is equal to
\begin{eqnarray*}
&=&\sum_{k}(\frac{i\hbar}{2})^{k}D_z^{k}(f_0)\sum_{j=0}^k
\frac{1}{[j]_q![k-j]_q!(1-q^{-1})^j{\bar{z}}^j}\\
&&\sigma_z^{k-j}\left(\frac{\sum_{l=0}^{k-j}(-1)^l\frac{[k-j]_{q^{-1}}!}{[k-j-l]_{q^{-1}}![l]_{q^{-1}}!}
q^{-(k-j-l)(k-j-l-1)/2}\sigma_{\bar{z}}^{l}(\sigma^j_z\sigma_{\bar{z}}^jg)}{(1-q^{-1})^{k-j}q^{-(k-j)(k-j-1)/2}{\bar{z}}^{k-j}}
\right)\gamma^{k}\\
&=&\sum_{k}(\frac{i\hbar}{2})^{k}D_z^{k}(f_0)\sum_{j=0}^k\sum_{l=0}^j
\frac{(-1)^{j-l}q^{(j-l)(j-l-1)/2}\sigma^k_z\sigma_{\bar{z}}^{j}g}{[l]_q![k-j]_{q}![j-l]_{q}!(1-q^{-1})^k{\bar{z}}^k}
\gamma^{k}.
\end{eqnarray*}
It is not difficult to check that
$\sum_{i=0}^j\frac{(-1)^{j-l}q^{(j-i)(j-i-1)/2}}{[i]_q![j]_q!}=0$ if
$j\ne 0$, and $=1$ if $j=0$. We replace this computation into the
above line and have
\[
f\star g=\sum\frac{(\frac{i\hbar}{2})^k}{[k]_q!(1-q^{-1})^k
{\bar{z}}^k}D_z^k(f_0)\sigma_z^kg \gamma^k.
\]
The computation of $g\star f$ is similar to the above and we
conclude that
\[
f\star g=\sum\frac{(\frac{i\hbar}{2})^k}{[k]_q!(1-q^{-1})^k
{\bar{z}}^k}D_z^k(f_0)\sigma_z^kg \gamma^k=g\star f.
\]
\end{proof}

\begin{rk}
The above proof on $f$ belonging to the center of quantum algebra
can be simplified by checking that $f$ commutes with the generators
of $S(\mathbb{R}^2)\rtimes \mathbb{Z}_n[[\hbar]]$, which consists of
$z,{\bar{z}},\gamma$. We have taken the above proof because it
extends to the algebra $C^\infty_c(\mathbb R^2)\rtimes
\mathbb{Z}_n[[\hbar]]$ directly.
\end{rk}

In the following, we study the algebraic structure on the center
$Z^\hbar_{\mathbb C}(\mathbb R^2,\mathbb{Z}_n)$ of the
(complexified) quantum algebra $\big(S(\mathbb R^2)\rtimes \mathbb
Z_n[[\hbar]]\big)\otimes_{\mathbb R}\mathbb C$. (The reason that we
consider the complexified algebra is that it is relatively easy to
write down a set of generators and their relations for the center.
However, our following discussion also works for the real algebra.)
It is easy to check that $u=z^n$, $v={\bar{z}}^n$ and
$w=z{\bar{z}}+\frac{\frac{i\hbar}{2}}{1-q^{-1}}\gamma$ are in the
center of the complexified quantum algebra $\big(S(\mathbb
R^2)\rtimes \mathbb Z_n[[\hbar]]\big)\otimes_{\mathbb R}\mathbb C$.
According to the isomorphism as vector space between the center of
the quantum algebra and $S(\mathbb R^2)^\Gamma$, we know that
$u=z^n$, $v={\bar{z}}^n$ and
$w=z{\bar{z}}+i\hbar\gamma/(2(1-q^{-1}))$ generates the whole center
$Z^\hbar_{\mathbb C}(\mathbb R^2,\mathbb{Z}_n)$. The relation
between these three generators is generated by
\begin{eqnarray*}
z^n \star {\bar{z}}^n&=&z\star \cdots \star z\star {\bar{z}}\star \cdots {\bar{z}}=z^{\star(n-1)}(z{\bar{z}}+\frac{i\hbar}{2}\gamma){\bar{z}}^{\star(n-1)}\\
&=&z^{\star(n-1)}(w+\frac{\frac{i\hbar}{2}}{1-q}\gamma){\bar{z}}^{\star(n-1)}\\
&=&z^{\star(n-1)}\star
{\bar{z}}^{\star(n-1)}\star(w+\frac{\frac{i\hbar}{2}
q^{-n+1}}{1-q}\gamma)\\
&=&z^{\star(n-2)}\star
{\bar{z}}^{\star(n-2)}\star(w+\frac{\frac{i\hbar}{2}
q^{-q+2}}{1-q}\gamma)\star (w+\frac{\frac{i\hbar}{2} q^{-n+1}}{1-q}\gamma)\\
&&\cdots \\
&=&(w+\frac{\frac{i\hbar}{2}}{1-q}\gamma)\star \cdots
\star(w+\frac{\frac{i\hbar}{2} q^{-n+1}}{1-q}\gamma)
\end{eqnarray*}
As $w$ is in the center, the last line can be viewed as the
expansion of $w^{\star
(n)}+\frac{(\frac{i\hbar}{2})^nq^{-\frac{n(n-1)}{2}}}{(1-q)^n}\gamma^n$.
Therefore,  $Z^\hbar_{\mathbb C}(\mathbb R^2,\mathbb{Z}_n)$ is
generated by $u,v,w$ with the relation that
$u^nv^n=w^n+(\frac{i\hbar}{2})^nq^{-\frac{n(n-1)}{2}}/(1-q)^n$. In
particular, we defines a deformation of the cone
$<u,v,w>/\{u^nv^n=w^n\}$, which is the algebra of polynomials on the
quotient $V/\mathbb{Z}_n$. Furthermore, we notice that for function
$F^\hbar(u,v,w)=u^nv^n-w^n-\frac{(\frac{i\hbar}{2})^nq^{-\frac{n(n-1)}{2}}}{(1-q)^n}$,
$(F^\hbar_u,F^\hbar_v,F^\hbar_w)$ is a non-zero vector in
$\mathbb{C}^3$ if and only if $u=v=w=0$, which is not on the surface
determined by $F^\hbar=0$. Therefore, we conclude that $F^\hbar=0$
determines a smooth surface when $\hbar\ne 0$, and $Z^\hbar_{\mathbb
C}(\mathbb R^2,\mathbb{Z}_n)$ is a nontrivial deformation of the
cone $\mathbb C^2/\mathbb{Z}_n$.

On the other hand, we look at the 0-th Poisson cohomology of the
Poisson structure $\pi_\gamma$ on $H^\bullet(S(\mathbb{R}^2)\rtimes
\mathbb{Z}_n,S(\mathbb{R}^2)\rtimes \mathbb Z_n )\otimes _{\mathbb
R}\mathbb{C}$. It is not difficult to see that the 0-th Poisson
cohomology $H^0_{\pi_\gamma}$ is isomorphic to $S(\mathbb
R^2)^{\mathbb{Z}_n}\otimes _{\mathbb R}\mathbb C$ as an algebra.

We summarize the above study into the following corollary.
\begin{proposition}The center $Z^\hbar_{\mathbb C}(\mathbb R^2,\mathbb{Z}_n)$ is not
isomorphic to the Poisson center
$H^0_{\pi_\gamma}[[\hbar]]=S(\mathbb R^2)^{\mathbb{Z}_n}[[\hbar]]$,
but defines a nontrivial deformation.
\end{proposition}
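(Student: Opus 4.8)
The plan is to distinguish the quantum center from the Poisson center by a ring-theoretic invariant that is blind to the (common) underlying module structure but detects the deformed defining relation, namely the locus of non-smoothness of the associated surface over $\mathbb C[[\hbar]]$. Recall that as $\mathbb C[[\hbar]]$-modules both algebras are free and equal to $S(\mathbb R^2)^{\mathbb Z_n}[[\hbar]]$, so the entire question is about the multiplication. The computation preceding the statement presents the quantum center as
\[
Z^\hbar_{\mathbb C}(\mathbb R^2,\mathbb Z_n)\ \cong\ \mathbb C[u,v,w][[\hbar]]/\big(uv-w^n-c\hbar^n\big),\qquad c=\Big(\tfrac i2\Big)^n\frac{q^{-n(n-1)/2}}{(1-q)^n}\neq0,
\]
while the Poisson center is the constant family $H^0_{\pi_\gamma}[[\hbar]]\cong \mathbb C[u,v,w][[\hbar]]/(uv-w^n)$. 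Both reduce modulo $\hbar$ to the singular $A_{n-1}$ surface $\{uv=w^n\}$, so no invariant of the special fibre can separate them; the deformation becomes visible only after inverting $\hbar$.

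I would then argue by contradiction. A $\mathbb C[[\hbar]]$-algebra isomorphism between the two centers would, after base change along $\mathbb C[[\hbar]]\hookrightarrow\mathbb C((\hbar))$, identify their generic fibres. By the relative Jacobian criterion the partials of $uv-w^n-c\hbar^n$ in $u,v,w$ are $(v,u,-nw^{n-1})$, all of which lie in a prime ideal only when $u,v,w$ do; but on the quantum surface $u=v=w=0$ forces $c\hbar^n=0$, hence $\hbar=0$ since $c\neq0$. Thus the quantum center is smooth over $\mathbb C[[\hbar]]$ away from the single closed point $(u,v,w,\hbar)$, and in particular its generic fibre is regular — this recovers the smoothness of the deformed surface for $\hbar\neq0$ observed just before the statement. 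By contrast the Poisson center fails to be smooth all along $V(u,v,w)\cong\operatorname{Spec}\mathbb C[[\hbar]]$, so its generic fibre is singular at the prime $(u,v,w)$, which is the $A_{n-1}$ singularity. Since regularity of a Noetherian ring is preserved under isomorphism, the generic fibres cannot be isomorphic, a contradiction.

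Therefore $Z^\hbar_{\mathbb C}(\mathbb R^2,\mathbb Z_n)$ is not isomorphic to $H^0_{\pi_\gamma}[[\hbar]]$ as a $\mathbb C[[\hbar]]$-algebra, and the quantization is a nontrivial deformation, a trivial one being by definition $\mathbb C[[\hbar]]$-isomorphic to the constant family. The step to watch is precisely this passage to the generic fibre: since the two algebras coincide modulo $\hbar$ they share the same singular special fibre, so one genuinely has to invert $\hbar$ to see that the deformation resolves the singularity; once there, the sole computation is the Jacobian check above, with the single added observation that on the deformed surface $u=v=w=0$ is incompatible with $\hbar\neq0$.
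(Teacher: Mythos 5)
Your proposal is correct and follows essentially the same route as the paper: both use the presentation of the quantum center by generators $u,v,w$ with the relation $uv=w^n+c\hbar^n$, $c\neq 0$, and both invoke the Jacobian criterion to contrast the smoothness of the deformed surface for $\hbar\neq 0$ with the $A_{n-1}$ singularity of the cone $\{uv=w^n\}$. Your added care in passing to the generic fibre over $\mathbb{C}((\hbar))$ and citing preservation of regularity under isomorphism makes explicit a step the paper leaves implicit, but the underlying argument is the same.
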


In particular, when $n=2$, the center $Z^\hbar_{\mathbb C}(\mathbb
R^2,\mathbb{Z}_2)$ is equal to
$<u,v,w>/\{uv=w^2+\frac{\hbar^2}{16}\}$. This is the algebra of
polynomial functions on the hyperboloid (when $\hbar$ is real, the
surface is one-sheeted, when $\hbar$ is imaginary, the surface is
two-sheeted.).

\begin{rk}
We can extend the above discussion of center to quantization of more
general Poisson structures. For example, the same discussion holds
true for the center of the quantization of the linear Poisson
structure $z\partial_x\wedge \partial_y$ on $S(\mathbb R^3)\times
\mathbb Z_n$ with $\mathbb Z_n$ acting on $\mathbb R^3$ by rotating
the $x,y$-plane and fixing the $z$ axis.
\end{rk}
In summary, we have seen that the center of the quantization of a
Poisson structure $\pi$ on an orbifold may not be isomorphic to the
0-th Poisson cohomology of $\pi$ as an algebra. On the other hand,
the well-known Duflo's isomorphism for a Lie algebra states that the
center of the universal enveloping algebra $\mathcal U(\mathfrak g)$
is isomorphic to the Poisson center of $S(\mathfrak g)$ as an
algebra. Our examples suggest that the natural extension of Duflo's
isomorphism does not hold in the case of quantization of Lie Poisson
structures on orbifolds. We plan to study this interesting phenomena
in future publications.

\end{document}